\theoremstyle{plain}
\newtheorem{thm}{Theorem}[section]
\newtheorem{lem}[thm]{Lemma}
\newtheorem{prop}[thm]{Proposition}
\newtheorem{si}[thm]{Situation}
\newtheorem{conj}[thm]{Conjecture}
\newtheorem{q}{Question}
\newtheorem{cor}[thm]{Corollary}
\theoremstyle{definition}
\newtheorem{defn}[thm]{Definition}
\theoremstyle{remark}
\newtheorem{rem}[thm]{Remark}
\theoremstyle{definition}
\newtheorem{ex}[thm]{Example}
\newtheorem{nt}[thm]{Notation}
\newcommand{\nwc}{\newcommand}
\nwc{\ba}{\mathbb{A}}
\nwc{\aaa}{\mathcal{F}}
\nwc{\aap}{\mathcal{F}_{P}}
\nwc{\al}{\alpha}
\nwc{\bs}{\backslash}
\nwc{\C}{\mathbb{C}}
\nwc{\cb}{\overline{C}}
\nwc{\ccc}{\mathfrak{c}}
\nwc{\cin}{\textbf{(v)}}
\nwc{\cl}{C'}
\nwc{\cp}{\mathcal{C}_{P}}
\nwc{\cpll}{\mathfrak{c}_{P'}}
\nwc{\ct}{\widetilde{C}}
\nwc{\dd}{\mathcal{L}}
\nwc{\ddd}{\mathfrak{d}}
\nwc{\ddl}{\mathcal{L}'}
\nwc{\dlp}{\delta_{P}}
\nwc{\doi}{\textbf{(ii)}}
\nwc{\g}{\mathfrak g}
\nwc{\enq}{$$}
\nwc{\fff}{\mathcal{F}}
\nwc{\ffp}{\mathcal{F}_{P}}
\nwc{\ffq}{\mathcal{F}_{Q}}
\nwc{\ffl}{\mathcal{F}'}
\nwc{\G}{\mathcal{G}}
\nwc{\Ga}{\Gamma}
\nwc{\gtl}{\widetilde{g}}
\nwc{\mO}{\mathcal{O}}
\nwc{\mF}{\mathcal{F}}
\nwc{\hra}{\hookrightarrow}
\nwc{\hua}{h^{1}(C,\aaa )}
\nwc{\cM}{\mathscr{M}}
\nwc{\kk}{{\rm K}}
\nwc{\sg}{\sigma}
\nwc{\llb}{\mathcal{L}}
\nwc{\mb}{\mathbb}
\nwc{\mc}{\mathcal}
\nwc{\mm}{\mathfrak{m}}
\nwc{\mmp}{\mathfrak{m}_{P}}
\nwc{\mpd}{\mathfrak{m}_{P}^{2}}
\nwc{\ov}{\overline}
\nwc{\nn}{\mathbb{N}}
\nwc{\ob}{\overline{\mathcal{O}}}
\nwc{\obr}{\mathcal{O}^*}
\nwc{\obp}{\overline{\mathcal{O}}_P}
\nwc{\och}{\mathcal{O}_{\hat{C}}}
\nwc{\oh}{\hat{\mathcal{O}}}
\nwc{\ohp}{\hat{\mathcal{O}}_{P}}
\nwc{\ol}{\mathcal{O}'}
\nwc{\oma}{\Omega (\mathfrak{a})}
\nwc{\omo}{\Omega (\mathcal{O})}
\nwc{\oo}{\mathcal{O}}
\nwc{\op}{\oplus}
\nwc{\opc}{\mathcal{O}_{P,C}}
\nwc{\oph}{\hat{\mathcal{O}}_{P}}
\nwc{\opl}{\mathcal{O}_{P}'}
\nwc{\oplc}{\mathcal{O}_{P,C}'}
\nwc{\ti}{\widetilde}
\nwc{\opll}{\mathcal{O}_{P'}}
\nwc{\opt}{\tilde{\mathcal{O}}_{P}}
\nwc{\optt}{{\mathcal{O}}_{\tilde{P}}}
\nwc{\oq}{\mathcal{O}_{Q}}
\nwc{\oqt}{\tilde{\mathcal{O}}_{Q}}
\nwc{\ot}{\otimes}
\nwc{\overop}{\bar{\oo}_{P}}
\nwc{\qq}{\mathbb{Q}}
\nwc{\PP}{\mathbb{P}}
\nwc{\pb}{\overline{P}}
\nwc{\pbb}{P^*}
\nwc{\pbi}{\overline{P_{i}}}
\nwc{\pbr}{\overline{P_{r}}}
\nwc{\cd}{\cdot}
\nwc{\pgmd}{\mathbb{P}^{g+2}}
\nwc{\pgmu}{\mathbb{P}^{g+1}}
\nwc{\ph}{\hat{P}}
\nwc{\pp}{\mathbb{P}}
\nwc{\NN}{\mathbb{N}}
\nwc{\QQ}{\mathbb{Q}}
\nwc{\prv}{\noindent\textbook{Proof}:}
\nwc{\pt}{\widetilde{P}}
\nwc{\ptl}{\tilde{P}}
\nwc{\pum}{\mathbb{P}^{1}}
\nwc{\fa}{\forall}
\nwc{\qh}{\hat{Q}}
\nwc{\qtl}{\tilde{Q}}
\nwc{\qua}{\textbf{(iv)}}
\nwc{\p}{\partial}
\nwc{\ra}{\rightarrow}
\nwc{\rh}{\hat{R}}
\nwc{\sei}{\textbf{(vi)}}
\nwc{\sep}{\beq\ast\ \ast\ \ast\enq}
\nwc{\sig}{\sigma}
\nwc{\Sig}{\Sigma}
\nwc{\ssp}{S_{P}}
\nwc{\sss}{{\rm S}}
\nwc{\cA}{\mathcal{A}}
\nwc{\cB}{\mathcal{B}}
\nwc{\cC}{\mathcal{C}}
\nwc{\cD}{\mathcal{D}}
\nwc{\cE}{\mathcal{E}}
\nwc{\cF}{\mathcal{F}}
\nwc{\cG}{\mathcal{G}}
\nwc{\cH}{\mathcal{H}}
\nwc{\cI}{\mathcal{I}}
\nwc{\cJ}{\mathcal{J}}
\nwc{\cK}{\mathcal{K}}
\nwc{\cL}{\mathcal{L}}
\nwc{\CM}{\mathcal{M}}
\nwc{\cN}{\mathcal{N}}
\nwc{\sM}{\mathscr{M}}
\nwc{\fC}{\mathfrak{C}}
\nwc{\tre}{\textbf{(iii)}}
\nwc{\fJ}{\mathfrak J}
\nwc{\um}{\textbf{(i)}}
\nwc{\vpb}{v_{\overline{P}}}
\nwc{\vtxp}{\widetilde{V}_{x,P}}
\nwc{\vxp}{V_{x,P}}
\let \wt=\widetilde
\nwc{\wh}{\hat{\omega}}
\nwc{\whp}{\hat{\omega}_{P}}
\nwc{\woch}{\omega\cdot\mathcal{O}_{\hat{C}}}
\nwc{\woh}{\omega\cdot\hat{\mathcal{O}}}
\nwc{\ww}{\omega}
\nwc{\wwb}{\omega^*}
\nwc{\wwct}{\omega _{\widetilde{C}}}
\nwc{\wwh}{\widehat{\omega}}
\nwc{\wwhp}{\widehat{\omega}_P}
\nwc{\wwp}{\omega _{P}}
\nwc{\wwt}{\widetilde{\omega}}
\nwc{\wwtp}{\widetilde{\omega}_P}
\nwc{\lag}{\langle}
\nwc{\rag}{\rangle}
\nwc{\ZZ}{\mathbb{Z}}
\nwc{\hh}{,\hdots,}
\let \ep=\epsilon
\let \ga=\gamma
\let \sub=\subset
\let \al=\alpha
\let \pr=\prime
\let \la=\lambda
\let \fr=\frac
\DeclareMathOperator\id{Id}
\DeclareMathOperator\supp{Supp}
\DeclareMathOperator\sy{Sym}
\DeclareMathOperator\spec{spec}
\DeclareMathOperator\Gal{Gal}
\DeclareMathOperator\dv{div}
\DeclareMathOperator\h{Hom}
\DeclareMathOperator\Gh{G-Hom}
\DeclareMathOperator\codim{codim}
\DeclareMathOperator\Jac{Jac}
\nwc {\Sp}{\underline{\spec}}
\DeclareRobustCommand{\SkipTocEntry}[5]{}
\begin{document}

\title{Weierstrass semigroups from cyclic covers of hyperelliptic curves}
\author{Ethan Cotterill, Nathan Pflueger, and Naizhen Zhang}
\begin{abstract}
The {\it Weierstrass semigroup} of pole orders of meromorphic functions in a point $p$ of a smooth algebraic curve $C$ is a classical object of study; a celebrated problem of Hurwitz is to characterize which semigroups ${\rm S} \sub \mb{N}$ with finite complement are {\it realizable} as Weierstrass semigroups ${\rm S}= {\rm S}(C,p)$. In this note, we establish realizability results for cyclic covers $\pi: (C,p) \ra (B,q)$ of hyperelliptic targets $B$ marked in hyperelliptic Weierstrass points; and we show that realizability is dictated by the behavior under $j$-fold multiplication of certain divisor classes in hyperelliptic Jacobians naturally associated to our cyclic covers, as $j$ ranges over all natural numbers.
\end{abstract}

\thanks{During much of the preparation of this work, Naizhen Zhang was supported by the DFG Priority Programme 2026 ``Geometry at infinity''.}
\address[Ethan Cotterill]{Imecc, Unicamp, Rua S\'ergio Buarque de Holanda, 651, 13.083-859, Campinas, SP, Brazil}
\email{cotterill.ethan@gmail.com}
\address[Nathan Pflueger]{Department of Mathematics and Statistics, Amherst College, Amherst, MA 01002}
\email{npflueger@amherst.edu}
\address[Naizhen Zhang]{Fairleigh Dickinson University Vancouver Campus, 842 Cambie St, Vancouver, BC V6B 2P6}
\email{n.zhang1@fdu.edu}
\maketitle
\tableofcontents
\section{Introduction}
{\it Hurwitz spaces} of covers $C \ra B$ of algebraic curves of fixed genus 
have long played an important role in algebraic geometry and number theory. {\it Cyclic} covers are distinguished by their simplicity; indeed, the geometry of a cyclic cover is controlled by its branch locus on the target curve $B$. It is natural to ask, in particular, about the behavior of {\it Weierstrass semigroups} in ramification points of a cyclic cover; this is a cyclic cover version of Hurwitz' celebrated {\it realization problem} for Weierstrass semigroups. Resolving it, however, is untenable without introducing further hypotheses. In this paper, we address the realization problem for ramification points $p$ of {\it marked} cyclic covers $(C,p) \ra (B,q)$ whose targets are hyperelliptic curves marked in hyperelliptic Weierstrass points. 

\medskip
Broadly speaking, this paper is in two parts. In the first, spanned by sections~\ref{sec:background} and \ref{sec:Weierstrass-realizability}, we develop the basic combinatorial machinery required to specify cyclic covers of hyperelliptic curves, and Weierstrass semigroups in the preimages of hyperelliptic Weierstrass points. We then apply this machinery to establish explicit restrictions on numerical semigroups that arise as Weierstrass semigroups associated with cyclic covers; we nearly resolve the Weierstrass-realizability problem for cyclic {\it triple} covers; and we establish that the Weierstrass-realizability problem for cyclic covers is equivalent to a realizability problem for {\it multiplication profiles} in hyperelliptic Jacobians, which describe the multiplicative behavior of divisor classes in the Jacobian of $B$ that are naturally extracted from the branch divisors of our covers. The second half of this paper, spanned by sections~\ref{sec:mult_profile} and \ref{sec:explicit_realizability}, is a close study of
the multiplication profiles of elements $[q^{\pr}-q]$ of the Jacobian that belong to image of $(B,q)$ under the Abel--Jacobi map. In section~\ref{sec:mult_profile} we establish a number of arithmetic constraints on these. We then focus in more detail on cyclic covers whose underlying branch divisors represent Abel--Jacobi images of torsion classes. The study of torsion points in hyperelliptic Jacobians that arise as Abel--Jacobi images is an active area of inquiry in its own right, with links to Diophantine solutions of generalized Pell equations (see \cite{SerreBourbaki,Zannier}) and the dynamics of billiards in ellipsoids (see \cite{GHponcelet,Dragovic}).

\medskip
It is worth noting that the question of which numerical semigroups are realizable as Weierstrass semigroups remains wide open in general, despite many special cases in the literature. A broad class of semigroups where realizability is known are those of low \emph{effective weight}, as defined in \cite{Pfl}: all genus-$g$ semigroups with $\operatorname{ewt}({\rm S}) \leq g-1$ are known to be realizable. The geometric relevance of $\operatorname{ewt}({\rm S})$, as established in {\it loc. cit.}, is that $3g-2-\operatorname{ewt}({\rm S})$ is a lower bound for the dimension of the moduli space $\mc{M}_{g,1}^{\rm S}$ of (smooth) curves of genus $g$ marked in points with Weierstrass semigroup ${\rm S}$. Nearly all of the multiplication profiles constructed in this paper result in realizable Weierstrass semigroups of effective weight larger than $g$; so these semigroups occupy as-yet-uncharted territory.

\medskip
Numerical semigroups ${\rm S}$ that may arise from our $N$-fold cyclic covers are all of multiplicity $2N$. As such, ${\rm S}$ has a {\it standard basis} $(e_0,\dots,e_{2N-1})$ comprised of nonzero elements of ${\rm S}$ that are minimal in their respective equivalence classes modulo $2N$.
Our main results on Weierstrass-realizability may be summarized as follows.

\begin{thm}\label{thm:main1}(Theorem \ref{thm:realizability3}.)
 An $(3,\gamma)$-hyperelliptic semigroup ${\rm S}$ of genus $g\ge 5\ga$ with standard basis $\{6,e_1\hh e_5\}$ is Weierstrass-realizable as a triple cover of a genus-$\ga$ hyperelliptic curve provided either $e_4<e_1$ or $e_4>e_1\ge 4g-6\ga+7-2\min\{e_2,e_5\}$.
\end{thm}
\begin{thm}\label{thm:main2}(Theorem \ref{thm:torsion}.)
    Let ${\rm S}$ be an $(N,\ga)$-hyperelliptic semigroup  of genus $g\ge(2N-1)\ga$ with standard basis $\{2N,e_1\hh e_{2N-1}\}$ modulo $2N$, and suppose $d=\frac{(2g-2)-N(2\ga-2)}{N(N-1)}$ is an integer. 
    The semigroup ${\rm S}$ is Weierstrass-realizable via a degree-$N$ cover of a genus-$\ga$ hyperelliptic curve if its $\ep$-vector (Definition \ref{defn:ep_vec}) is equal to one of the following:
 \begin{enumerate}
     \item any potential multiplication profile of a torsion point whose order is either among $2,2\ga+1,2\ga+2,2\ga+4,2\ga+6$ or is $2\ga+7$, with $\ga\ge 4$;
     \item three out of five potential multiplication profiles of a torsion point of order $2\ga+8$ (listed in Example~\ref{ex:2ga+8}); or
     \item one out of three potential multiplication profiles of a torsion point of order $2\ga+9$, and $\ga\ge 4$ (listed in Example~\ref{ex:2ga+9}).
 \end{enumerate}   
\end{thm}

\begin{thm}\label{thm:eff_wt}(Proposition \ref{prop:non-secundiveness})
    The $(3,\ga)$-hyperelliptic semigroups realized in Theorem \ref{thm:main1} have effective weights greater than or equal to their respective genera $g$ whenever $g\ge 6\ga+7$. More generally, any feasible $(N,\ga)$-hyperelliptic semigroup of multiplicity $2N$ and genus $g$ (Definition \ref{defn:feasible}) has effective weight at least $g$ whenever $g>\frac{N}{2}(4\ga+3)+1$.
\end{thm}
Our results have the following moduli-theoretic consequences:
\begin{cor}
    For any numerical semigroup ${\rm S}$ listed in Theorems~\ref{thm:main1}, \ref{thm:main2} and \ref{thm:eff_wt}, the moduli space $\mc{M}_{g,1}^{\rm S}$ is non-empty and contains at least one component whose dimension is at least $3g-2-\operatorname{ewt}({\rm S})$. Furthermore, many of our examples in Theorem \ref{thm:main2} satisfy $\operatorname{ewt}({\rm S})> 3g-2 $, thereby furnishing a large class of examples where ${\rm S}$ is realizable and $\codim\mc{M}_{g,1}^{\rm S}<\operatorname{ewt}({\rm S})$. See \url{https://github.com/npflueger/cyclicCoverSemigroups/blob/main/listStaircase-output.txt}.
\end{cor}
\addtocontents{toc}{\SkipTocEntry}
\subsection*{Roadmap}

A detailed synopsis of the material following this introduction is as follows. The main result of section~\ref{sec:background} is {\bf Proposition~\ref{prop:push}}, which explicitly characterizes the decomposition of the pushforward of the (twisted) structure sheaf $\mc{O}_C(mp)$ under the Galois quotient of marked curves $(C,p) \ra (B,q)$ by a finite cyclic group; as such, it determines how the Weierstrass semigroup in $p$ is controlled cohomologically on the hyperelliptic target $(B,q)$. Section~\ref{sec:background} also includes a review of Mumford's theory of {\it reduced} presentations of divisors on a hyperelliptic curve, and Cantor's algorithm for computing these. In this paper, we always reduce with respect to a fixed Weierstrass point $q$, and use presentations that split as an effective divisor minus a multiple of $q$. 

\medskip
In section~\ref{sec:Weierstrass-realizability}, we specialize to cyclic covers in which the target is a hyperelliptic curve $B$ marked in a hyperelliptic Weierstrass point $q$.
In {\bf Theorem~\ref{gap_vector_thm}}, and specifically in Equation \eqref{cyclic_cover_standard_basis_restriction}, we establish, via a cohomological argument, that the value of every pairwise sum $e_i+e_{N+i}$, $i=1,\dots,N-1$ of components in the standard basis $(2N,e_1,\dots,e_{2N-1})$ of the Weierstrass semigroup in $p$ is fixed; so the standard basis is determined by $\min\{e_1,e_{N+1}\}\hh \min\{e_{N-1},e_{2N-1}\}$. Elements of the standard basis satisfy inequalities that arise from the additive structure of ${\rm S}$; taken together, these prescribe a {\it feasible set} $F(N)$ of potentially-realizable values for $a_j=\frac{\min\{e_{N-j},e_{2N-j}\}+j}{N}$, $j=1,\dots,N-1$. In {\bf Proposition~\ref{prop:reduced_pair}}, we reformulate Weierstrass-realizability as the existence of an $(N-1)$-tuple of effective divisors $(E_1,\dots,E_{N-1})$ in which each $E_j$ is the effective part of the $q$-reduced representative of $jE_1$ for every $j$; the degrees of the $E_j$ comprise the \textit{multiplication profile} of $E_1$.
{\bf Theorem~\ref{thm:realizability}} gives a concrete application of Proposition~\ref{prop:reduced_pair}: we show that when $N=3$, a proper nonempty subset of $F(3)$ indexes semigroups that are Weierstrass-realizable via cyclic covers.

\medskip
In section~\ref{sec:mult_profile} we begin a close study of the multiplication profiles of point classes $[q^{\pr}-q]$.
{\bf Proposition~\ref{prop:profile_conditions}} gives an explicit list of constraints for a sequence of integers to arise from some point class, which we prove to be necessary and conjecture to be sufficient; while {\bf Proposition~\ref{prop:coeff_prof}} establishes that every multiplication profile is uniquely characterized by the vanishing or non-vanishing of explicit determinants in the coefficients of the expansion of $y=\sqrt{f(x)}$ near $x=0$ as a power series in $x$. We use these conditions, in turn to construct an explicit quasi-projective parameter space for marked hyperelliptic curves equipped with $N$-torsion points.

\medskip
In {\bf Examples~\ref{ex:(2ga+1)-torsion}-\ref{ex:2ga+9}} of section~\ref{sec:explicit_realizability} we leverage Mumford presentations, Cantor's reduction algorithm, and earlier work of Flynn \cite{flynn1991sequences} to produce explicit models of hyperelliptic curves with torsion orders $N \leq 2\ga+9$; we managed to realize 14 out of 20 possible multiplication profiles. In {\bf Example~\ref{ex:fibonacci_numerology}}, we combinatorially analyze those sequences of numbers eligible to be multiplication profiles of $N$-torsion point classes, and show that in each respective parity, the number of eligible sequences is a Fibonacci number.
In {\bf Propositions~\ref{prop:CT_presentation_and_ewt} and \ref{prop:staircase_minimal_generation}} we combinatorially analyze two distinguished multiplication profiles: the all-zeroes profile, corresponding to a semigroup ${\rm U}_0$ studied earlier by Carvalho and Torres, and a profile of {\it staircase} type, which indexes a semigroup ${\rm S}_N$ for every $N \geq 2$. We compute the effective weight of ${\rm U}_0$ and a minimal presentation for ${\rm S}_N$, respectively, under appropriate hypotheses. And in {\bf Proposition~\ref{prop:non-secundiveness}} we show that modulo suitable numerical hypotheses, any $(N,\ga)$-hyperelliptic semigroup ${\rm S}$ of genus $g$ 
that arises via our cyclic cover construction is {\it non-secundive} in the sense of \cite[Def. 5.1]{Pfl}, and therefore has effective weight at least $g$. We have also explicitly computed the effective weights of an extensive list of numerical semigroups via computer; our code may be consulted at \url{https://github.com/npflueger/cyclicCoverSemigroups} and our
examples at \url{https://github.com/npflueger/cyclicCoverSemigroups/blob/main/listSemigroups-output.txt} and \url{https://github.com/npflueger/cyclicCoverSemigroups/blob/main/listStaircase-output.txt} respectively. 

\medskip
Finally, Appendix~\ref{sec:separability} is a collection of separability results for polynomials that are needed in order to certify that our models are smooth, while Appendix~\ref{sec:flynn} is a detailed treatment of (our adaptations of) Flynn's models.

\subsection{Conventions}\addtocontents{toc}{\protect\setcounter{tocdepth}{1}} In this paper, we work over an algebraically closed field $k$ of characteristic zero. A variety over $k$ will always mean an integral, separated scheme of finite type over $k$ unless otherwise specified.

\subsection{Acknowledgements} We thank Joe Harris 
for illuminating conversations, and the anonymous referee for comments that have helped us improve the exposition. Fernando Torres invited the first author to speak in Campinas on this topic in early 2020, and sadly left us before this project's completion. We dedicate this paper to his memory.

\section{Background}\label{sec:background}
\subsection{Cyclic covers of algebraic curves}\addtocontents{toc}{\protect\setcounter{tocdepth}{1}}
In this section, we review the basic theory of cyclic covers of smooth projective algebraic curves. 
\begin{defn}[Cf. \cite{pardini1991abelian}, \cite{pardini1998period}]\label{abelian_cover}
Given a smooth variety $Y$ and a finite abelian group $G$, a $G$-\textit{cover} of $Y$ is a finite, surjective morphism $\pi: X\to Y$ for which $X$ is normal, $K(X)/K(Y)$ is Galois, and $\Gal(K(X)/K(Y))=G$.
\end{defn}
Definition~\ref{abelian_cover} has a number of immediate consequences. First of all, finiteness of $\pi$ implies that it is affine, and therefore $X\cong\Sp(\pi_*\mO_X)$. Secondly, $G$ acts on $X$ and finiteness of $G$ implies $Y=X/G$; moreover, $G$ acts transitively on the fibers of $\pi$. Thirdly, whenever $\pi$ is flat (e.g., when $\pi$ is a finite map between smooth projective curves), $\pi_*\mO_X$ is a locally-free sheaf on $Y$ carrying the structure of an $\mO_Y$-algebra with a $G$-action. It is therefore useful to work in the category of $G$-sheaves; see \cite[Sec. 4]{bridgeland2001mckay}.

In our study of degree-$N$ cyclic covers $\pi:C\to B$ of curves, $G$ will always be the cyclic Galois group $\Gal(K(C)/K(B))\cong\mu_N=\langle\xi\mid\xi^N=1\rangle$, which acts trivially on $B$ and permutes points in the fibers of $\pi$. In particular, $\pi$ is $G$-equivariant and for any $G$-line bundle $L$ on $C$ (resp. any line bundle $M$ on $B$), $L$ and $\pi_*L$ (resp. $M$ and $\pi^*M$) are $G$-sheaves on $C$ (resp. $B$) and $B$ (resp. $C$). In this context, we have
\begin{equation}\label{eqn:nat_iso}
\Gh_{\mO_C}(\pi^*M,L)\cong \Gh_{\mO_B}(M,\pi_*L).
\end{equation}


\begin{ex}\label{ex:tot_ramf}
Let $\pi:C\to B$ be a degree-$N$ cyclic cover of smooth projective curves equipped with a totally ramified point $p\in C$. Then $\mO_C(mp)$ is $\mu_N$-invariant, as $p$ is fixed by the action of the Galois group. As a result, $\pi_*\mO_C(mp)$ is a locally-free $G$-sheaf on $B$ and hence decomposes as a direct sum of line bundles
\[\pi_*\mO_C(mp)=\bigoplus_{i=0}^{N-1}L_i\]
where $L_i=\ker(\la^{\pi_*\mO_C(mp)}_{\xi}-\eta^{i}\cd\id)$, for some fixed 
primitive $N$-th root of unity $\eta$.
Given our assumptions on the ground field as well as on the extension $K(C)/K(B)$, we have $K(C)=K(B)[x]/(x^N-\theta)$ for some $\theta\in K(B)$. In particular, $K(C)$ decomposes as $K(C)=\bigoplus_{i=0}^{N-1}(K(B)\cd x^i)$, where $K(B)\cd x^i$ is the eigenspace with eigenvalue $\eta^i$ for the action of our pre-chosen generator $\xi$ of the Galois group. It follows that $L_i(V)=\mO_C(mp)(\pi^{-1}(V))\cap (K(B)\cd x^i)$ for every open set $V\subset B$.
\end{ex}

\subsection{Covering data}
Hereafter we will focus on simple cyclic covers of smooth projective curves, each of which is determined by a pair $(L,D)$ for which $L^{\ot N}\cong\mO_B(D)$. More concretely, there is an $\mO_Y$-algebra structure on $\bigoplus_{i=0}^{N-1}L^{-i}$ given by the natural morphisms \[L^{k-N}\cong L^{k}\ot\mO_B(-D)\to L^{k}\ot\mO_B\cong L^k.\] 

\begin{lem}\label{lem:normalization}
Let $D$ be an effective divisor, $L$ be an invertible sheaf on a smooth projective curve $B$, such that $L^{\ot N}\cong \mO_B(D)$. 
There is a degree-$N$ cyclic cover $\pi:C\to B$ of smooth projective curves for which
\[\pi_*\mO_C\cong \bigoplus_{i=0}^{N-1}L^{-i}\bigg(\bigg[\frac{iD}{N}\bigg]\bigg)\]
which is the normalization of the cover
\[\Sp(\bigoplus_{i=0}^{N-1}L^{-i})\to B.\]
\end{lem}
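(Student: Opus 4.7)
The plan is to realize $C$ as the normalization of the explicit non-normal cover $X_0 := \Sp(\cA_0)$, where $\cA_0 := \bigoplus_{i=0}^{N-1} L^{-i}$ carries the $\mO_B$-algebra structure described in the paragraph preceding the lemma: the multiplication $L^{-i} \ot L^{-j} \to L^{-((i+j) \bmod N)}$ is the tautological isomorphism when $i+j < N$, and is obtained from $L^{-i} \ot L^{-j} = L^{N-i-j} \ot L^{-N} \xrightarrow{s_D} L^{N-i-j}$ when $i+j \ge N$, where $s_D$ is the section $\mO_B \hra L^N \cong \mO_B(D)$ cutting out the branch divisor. My goal is then to exhibit a normal $\mO_B$-algebra $\cA$ containing $\cA_0$, agreeing with it away from $\supp D$, and to take $C := \Sp(\cA)$.

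The candidate is $\cA := \bigoplus_{i=0}^{N-1} L^{-i}(\lfloor iD/N \rfloor)$. First I verify that $\cA$ inherits an algebra structure extending that of $\cA_0$. This is a formal bookkeeping exercise: the pointwise floor inequality
\[
\lfloor iD/N \rfloor + \lfloor jD/N \rfloor \le \lfloor (i+j)D/N \rfloor
\]
(and, for $i+j\ge N$, the analogous inequality obtained by subtracting $D$ from both sides) implies that the maps on $\cA_0$ already send $L^{-i}(\lfloor iD/N \rfloor) \ot L^{-j}(\lfloor jD/N \rfloor)$ into the appropriate summand of $\cA$; no new multiplications need to be defined. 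The inclusion $\cA_0 \hra \cA$ is thus a finite morphism of $\mO_B$-algebras which is an isomorphism outside $\supp D$.

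The main step, and the one I expect to be the principal obstacle, is to check that $\cA$ is integrally closed, whence $C = \Sp(\cA)$ is the normalization of $X_0$. Normality is local on $B$, so near a point $P \in B$ with $\ord_P(D) = n$, uniformizer $z$, and a trivialization of $L$, the algebra $\cA_0$ becomes $\mO_{B,P}[t]/(t^N - uz^n)$ for a unit $u$, and $\cA$ becomes the subring of its total ring of fractions generated over $\mO_{B,P}$ by the elements $t_i := t^i/z^{\lfloor in/N \rfloor}$ for $0 \le i \le N-1$. Integrality of each $t_i$ is immediate from $t_i^N = u^i z^{in - N\lfloor in/N \rfloor} \in \mO_{B,P}$, so $\cA$ is contained in the integral closure. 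For the reverse inclusion I would argue that $\mO_{B,P}[t]/(t^N - uz^n)$ splits, after passing to a sufficient root extension, into $\gcd(N,n)$ formal branches each of the form $\mO_{B,P}[s]/(s^{N/\gcd} - \zeta z^{n/\gcd})$; on each branch the $t_i$ separate the residues $in \bmod N$ through every value in the relevant coset of $\gcd(N,n)\ZZ/N\ZZ$, and a direct computation with valuations in a DVR shows the resulting ring is already the integral closure.

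Granting normality, $C := \Sp(\cA)$ is a one-dimensional normal scheme, finite and flat over the smooth curve $B$, hence itself smooth; projectivity follows from the finiteness of $\pi$ together with projectivity of $B$. The identification $\pi_*\mO_C = \bigoplus_{i=0}^{N-1}L^{-i}(\lfloor iD/N \rfloor)$ is tautological from the construction, and the $\mu_N$-eigenspace decomposition is inherited from $\cA_0$, with the $i$-th summand serving as the $\eta^{-i}$-eigensheaf for any fixed primitive $N$-th root of unity $\eta$.
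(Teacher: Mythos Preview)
Your argument is correct and essentially reproduces the content of the reference the paper invokes. The paper's own proof is a single sentence: it cites \cite[Lem.~1.8]{esnault1982revetements} and says nothing more. What you have written is, in effect, a self-contained proof of that cited lemma in the curve case: verifying via the floor inequality that $\cA=\bigoplus_i L^{-i}(\lfloor iD/N\rfloor)$ is closed under the multiplication inherited from $\cA_0$, and then checking normality locally by writing the stalk as $\mO_{B,P}[t]/(t^N-uz^n)$ and exhibiting the $t_i=t^i/z^{\lfloor in/N\rfloor}$ as module generators of the integral closure. So the route is not genuinely different; you are simply unpacking what the paper black-boxes.

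Two minor comments on your write-up. First, your normality sketch passes to ``formal branches,'' which is legitimate because normality may be checked after completion and the base field is algebraically closed of characteristic zero (so the required roots of the unit $u$ exist); you might say this explicitly. Second, you do not address irreducibility of $C$, which is implicit in the paper's notion of a cyclic cover (Definition~\ref{abelian_cover} requires $K(C)/K(B)$ to be a Galois field extension). This holds provided the covering data is primitive, i.e.\ there is no $0<m<N$ with $m\mid N$ and $L^{\otimes m}\cong\mO_B(\lfloor mD/N\rfloor)$; the paper tacitly relies on Esnault--Viehweg for this as well.
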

Here $[\cdot]$ denotes the integral part of a $\qq$-Cartier divisor. The cover \eqref{lem:normalization} is sometimes called the $N$\textit{-th extraction} of $D$ (see \cite{esnault1982revetements}).
\begin{proof}
This is a special case of \cite[Lem. 1.8]{esnault1982revetements}.  
\end{proof}

\begin{rem}
We shall refer to pairs $(L,D)$ such that $L^{\ot N}\cong\mO_B(D)$ on curves as \textit{covering data}.
\end{rem}

\begin{thm}[Thm. 3.3, \cite{morrison1986galois}]\label{thm:foundation}
Associated with each collection of covering data $(L,D=\sum a_kq_k)$, $1\le a_k<N$ for which $L^{\ot N}\cong\mO_B(D)$, there is a degree-$N$ cyclic cover $\pi:C\to B$ branched over $D_{\text{red}}$, where $\pi^{-1}(q_k)$ consists of $\gcd(a_k,N)$ many points. Conversely, any degree-$N$ cyclic cover $\pi:C\to B$ may be constructed from some covering data $(L,D=\sum a_kq_k)$ using the method of Lemma~\ref{lem:normalization}; the same cover may also be constructed using the covering data $(L^{(i)},D^{(i)})$, where $i<N$, $\gcd(i,N)=1$ and 
\[L^{(i)}=L^i\bigg(-\bigg[\frac{iD}{N}\bigg]\bigg),D^{(i)}=\sum a^{(i)}_kq_k\]
with $a^{(i)}_k\equiv ia_k$ (mod $N$) and $1\le a^{(i)}<N$. 
\end{thm}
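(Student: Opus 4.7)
The plan is to build on Lemma~\ref{lem:normalization}, which already supplies the underlying cover $C$ as the normalization of $\Sp(\bigoplus_{i=0}^{N-1}L^{-i})$, and to combine it with the eigensheaf decomposition from Example~\ref{ex:tot_ramf}. The three assertions---existence with the claimed branching, the converse, and the independence of the covering data---will follow, respectively, from a local computation of the normalization at each $q_k$, the canonical character decomposition of $\pi_*\mO_C$, and a bookkeeping of how this decomposition transforms when the chosen generator of $\hat{\mu}_N$ is replaced.

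First, for the ramification count, I would work \'etale-locally at $q_k$: choose a uniformizer $t$ together with a trivialization of $L$, so that the affine algebra $\bigoplus_{i=0}^{N-1} L^{-i}$ becomes $\mO_{B,q_k}[y]/(y^N - u t^{a_k})$ for a unit $u$. Setting $d := \gcd(a_k,N)$ and writing $N = dN'$, $a_k = da'$ with $\gcd(a',N')=1$, the polynomial $y^N - u t^{a_k}$ factors (after extracting $d$-th roots of $u$) into $d$ pieces of the form $y^{N'} - u_j t^{a'}$; each such piece has a unibranch singularity that normalizes to a single smooth point mapping to $q_k$ with ramification index $N' = N/d$. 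Hence $\pi^{-1}(q_k)$ consists of exactly $\gcd(a_k,N)$ points, as claimed, and the total branching occurs precisely over $D_{\text{red}}$.

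For the converse, given an arbitrary degree-$N$ cyclic cover $\pi:C\to B$, Example~\ref{ex:tot_ramf} decomposes $\pi_*\mO_C = \bigoplus_{i=0}^{N-1}L_i$ into $\eta$-eigensheaves, and the $\mO_B$-algebra multiplications $L_i \otimes L_j \to L_{(i+j) \bmod N}$ carry all the relevant structure. Setting $L := L_1^{-1}$, the $N$-fold multiplication $L_1^{\otimes N} \to L_0 = \mO_B$ gives an inclusion $L^{-N} \hookrightarrow \mO_B$ whose cokernel defines an effective divisor $D$ with $L^{\otimes N} \cong \mO_B(D)$. The identification $L_j \cong L^{-j}([jD/N])$ predicted by Lemma~\ref{lem:normalization} is then verified by reading off the algebra structure at each stalk, so the pair $(L,D)$ reconstructs the original cover $\pi$; moreover, the multiplicity $a_k$ of $q_k$ in $D$ agrees with the inertia weight computed in the local analysis above.

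Finally, the equivalence $(L,D) \sim (L^{(i)}, D^{(i)})$ reflects the non-canonicity of the identification $\mu_N \cong \mb{Z}/N\mb{Z}$: replacing $\eta$ by $\eta^i$ for $\gcd(i,N)=1$ relabels the eigensheaves via $L_j \leftrightarrow L_{ij \bmod N}$, so that the new ``first'' eigensheaf is the old $L_i$, whose dual is $L^i(-[iD/N])$ by the formula just established. Comparing $(L^{(i)})^{\otimes N}$ with $\mO_B(D^{(i)})$ then forces the congruence $a_k^{(i)} \equiv ia_k \pmod N$. The main obstacle I anticipate is precisely this floor-function bookkeeping, and in particular verifying that the two covers constructed from $(L,D)$ and $(L^{(i)}, D^{(i)})$ really coincide as schemes over $B$ rather than merely sharing numerical invariants; at each $q_k$ this reduces to the observation that the normalizations of $y^N = t^{a_k}$ and $z^N = t^{a_k^{(i)}}$ are canonically isomorphic via a substitution of the form $z = y^i / t^{[ia_k/N]}$, which globalizes to the asserted twist.
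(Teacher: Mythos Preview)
Your proposal is correct and follows the same three-part structure as the paper. The paper is considerably terser: for the fiber count it simply invokes the fact that $K(C)/K(B)$ is a Kummer extension rather than your explicit local factorization of $y^N-ut^{a_k}$, and for the converse it names the eigensheaf decomposition and a section of $L^{\otimes N}$ without spelling out, as you do, that the section is the one supplied by the $N$-fold multiplication in the algebra.

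The one genuine difference is in the equivalence $(L,D)\sim(L^{(i)},D^{(i)})$. You argue geometrically, producing the local isomorphism $z=y^i/t^{[ia_k/N]}$ between the normalizations of $y^N=t^{a_k}$ and $z^N=t^{a_k^{(i)}}$ and then globalizing. The paper instead observes the purely formal identity $(L^{(i)})^{(j)}=L$ and $(D^{(i)})^{(j)}=D$ whenever $ij\equiv 1\pmod N$, so that the passage $(L,D)\mapsto(L^{(i)},D^{(i)})$ is invertible; since the converse direction already shows that the covering data extracted from a given cover depends only on the choice of generator of $\widehat{\mu_N}$, invertibility forces both data to reconstruct the same $\pi$. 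Your approach is more explicit and self-contained; the paper's is quicker once the converse is in hand, and sidesteps the floor-function bookkeeping you flagged as the main obstacle.
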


\begin{proof}
The construction of a cyclic cover from covering data $(L,D)$ is the content of Lemma \ref{lem:normalization}. It is easy to see that $\pi$ is branched over $D_{\text{red}}$, as $\pi_*\mO_C|_{B\backslash D_{\text{red}}}\cong \mO_{B\backslash D_{\text{red}}}^{\op N}$. The fact that $\pi^{-1}(q_k)$ consists of $\gcd(a_k,N)$ many points follows from the fact that $K(C)/K(B)$ is a Kummer extension of function fields. \footnote{Recall also that in this case all ramification points over a given point $q_k$ in the branch locus have the same ramification index $N_k=\frac{N}{\gcd(a_k,N)}$.}

\medskip
Conversely, given any degree-$N$ cyclic cover $\pi:C\to B$, we have $\pi_*\mO_C\cong \mO_Y\oplus \bigoplus_{\chi} L_{\chi}$. Choose $L=L_{\chi}^{-1}$ for some $\chi$ generating $\widehat{\mu_N}$, together with a section $s$ of $L^{\ot N}$, and let $D=\dv(s)$; then up to isomorphism of covers, $(L,D)$ is a collection of covering data for $\pi$. Moreover, whenever $\text{gcd}(i,N)=1$ and $j \in [1,N]$ is such that $i\cd j\equiv 1$(mod $N$), we have $(L^{(i)})^{(j)}=L$ and $(D^{(i)})^{(j)}=D$. Thus, $(L^{(i)},D^{(i)})$ determines a collection of covering data for $\pi$ whenever $(L,D)$ does.
\end{proof}

\begin{rem}
In particular, $\pi$ is totally ramified at all ramification points whenever $N$ is prime.
\end{rem}

Now suppose $D=\sum a_kq_k$ and that $p$ is a ramification point of $\pi$ with image $\pi(p)=q_k$. Set $N_k:=\frac{N}{\gcd(a_k,N)}$, $a^{\pr}_k:=\frac{a_k}{\gcd(a_k,N)}$. Recall that $K(C)=K(B)[x]/(x^N-\theta)$; so $x^n= \theta$. As $v_{q_k}(\theta)=a_k$, it follows that $v_p(\theta)=a_kN_k$ and therefore $v_p(x)=a^{\pr}_k$.

\medskip
On the other hand, recall that $L^{-i}([\frac{iD}{N}])$ is the eigensheaf associated to $\eta^i$, where $\eta$ is our pre-chosen primitive $N$-th root; so $\pi^{*}L^{-i}([\frac{iD}{N}])$ is the invertible sub-sheaf of $\mO_C$ locally generated by some multiple of $x^i$. In particular, we deduce the following:

\begin{lem}\label{lem:summand_stalk}
Suppose that $\pi(p)=q_1$. The image of 
$\big(\pi^{*}L^{-i}([\frac{iD}{N}])\big)_p$ in $\mO_{C,p}$ is $\mathfrak{m}^b_p$, where $b$ is the unique integer in $[0,N_1-1]$ congruent to $ia'_1$ modulo $N_1$. 
\end{lem}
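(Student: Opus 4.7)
The plan is to exhibit an explicit local generator of the invertible subsheaf $\pi^{*}L^{-i}([\frac{iD}{N}])\hookrightarrow \mO_C$ at $p$ and to directly compute its $p$-adic valuation. From the paragraph preceding the lemma we already know this subsheaf is locally generated by \emph{some} multiple of $x^i$; the task is to pin down the correct multiple and evaluate it.

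First I would set up local coordinates: let $t$ be a uniformizer on $B$ at $q_1$, and choose a local trivialization $u$ of $L^{-1}$ in a neighbourhood of $q_1$. The isomorphism $L^{\otimes N}\cong \mO_B(D)$ identifies $u^N$ with a rational function of valuation $a_1$ at $q_1$; by rescaling $u$ we may arrange that $u^N$ is precisely the function $\theta\in K(B)$ for which $K(C)=K(B)[x]/(x^N-\theta)$, so that $u$ corresponds to $x$ under the identification of eigensheaves discussed in Example~\ref{ex:tot_ramf}. Near $q_1$, the integer part $[\frac{iD}{N}]$ equals $\lfloor ia_1/N\rfloor q_1$, so a local generator of $L^{-i}([\frac{iD}{N}])$ at $q_1$ is $u^{i}\cdot t^{-\lfloor ia_1/N\rfloor}$, which under the above identification is the rational function $x^i\,t^{-\lfloor ia_1/N\rfloor}\in K(C)$. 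Pulling back via $\pi^{*}$ then just gives this same element, now viewed in $\mO_{C,p}$.

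Second I would carry out the valuation computation. Since $\pi(p)=q_1$ with ramification index $N_1$ we have $v_p(t)=N_1$, and from the discussion immediately preceding the lemma $v_p(x)=a'_1$. Therefore
\[
v_p\!\left(x^{i}\,t^{-\lfloor ia_1/N\rfloor}\right)=ia'_1-N_1\,\lfloor ia_1/N\rfloor.
\]
The equality $a_1/N=a'_1/N_1$ rewrites this as $ia'_1-N_1\lfloor ia'_1/N_1\rfloor$, which by definition is the residue of $ia'_1$ modulo $N_1$, hence is the integer $b\in[0,N_1-1]$ appearing in the statement. Since the image of the stalk is a principal ideal of $\mO_{C,p}$ generated by an element of valuation $b$, it coincides with $\mathfrak{m}_p^{b}$.

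The only mildly subtle step is the normalization in the first paragraph, i.e.\ verifying that after a suitable rescaling the trivialization $u$ of $L^{-1}$ can be identified with the Kummer generator $x\in K(C)$; once this identification is in hand everything else is a one-line valuation calculation using the floor-function identity $\lfloor ia_1/N\rfloor=\lfloor ia'_1/N_1\rfloor$.
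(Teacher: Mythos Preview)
Your proof is correct and follows essentially the same approach as the paper: both arguments identify a local generator of the invertible subsheaf at $p$ (of the form $t_{q_1}^{-\lfloor ia_1/N\rfloor}x^i$ up to a unit) and compute its $p$-adic valuation using $v_p(t_{q_1})=N_1$ and $v_p(x)=a'_1$. The paper's version is terser, simply recording that stalk elements have the form $fx^i$ with $f\in K(B)$ and invoking the same two valuation facts, but the substance is identical.
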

\begin{proof}
Elements in $\big(\pi^{*}L^{-i}([\frac{iD}{N}])\big)_p$ are represented by functions of the form $fx^i$, where $f\in K(B)$ is regular near $p$. Let $t_{q_1}$ be a uniformizer at $q_1\in B$. The desired conclusion follows from the facts that $v_p(\pi^{*}t_{q_1})=N_1$ and $v_p(x^i)=i\cdot a^{\pr}_1$.
\end{proof}

\begin{prop}\label{prop:push}
Suppose $\pi:C\to B$ is a cyclic cover constructed out of the covering data $(L,D=\sum a_iq_i)$ for which $L^{\ot N}\cong\mO_B(D)$. Fix $p\in C$ such that $q_1=\pi(p)$, and suppose $\pi$ is totally ramified at $p$. Then for every non-negative integer $m$, we have 
\begin{equation}
    \pi_*\mO_C(mp)=\bigoplus_{i=0}^{N-1}(L^{(i)})^{-1}\bigg(\bigg\lfloor\frac{m}{N}+\bigg\lag \frac{ia_1}{N}\bigg\rag \bigg\rfloor q_1\bigg)
\end{equation}
where $L^{(i)}=L^i(-[\frac{iD}{N}])$, for $i=0\hh N-1$, and $\lag c\rag$ denotes the fractional part of a positive rational number $c$. 
\end{prop}
\begin{proof}
As discussed in Example \ref{ex:tot_ramf}, $\pi_*\mO_C(mp)$ decomposes as a direct sum of line bundles $\pi_*\mO_C(mp)=\bigoplus_{i=0}^{N-1}L_i$, where $L_i$ is the eigensheaf on which a fixed generator $\xi$ of $\mu_N$ acts by multiplication-by-$\eta^i$, for some preselected primitive $N$-th root of unity $\eta$.    

\medskip
Now let $U=B\backslash\{q_1\}$. As $\mO_C(mp)|_{\pi^{-1}(U)}=\mO_C|_{\pi^{-1}(U)}$, we have $\pi_*\mO_C(mp)|_{U}=\pi_*\mO_C|_{U}=\bigoplus_{i=0}^{N-1}(L^{(i)})^{-1}|_U$. This means, in particular, that 
\[\pi_*\mO_C(mp)=\bigoplus_{i=0}^{N-1}(L^{(i)})^{-1}(n_iq_1)\]
for certain integers $n_i$, and it remains to determine these.
For this purpose, we use the natural isomorphism of Equation (\ref{eqn:nat_iso}), which yields
\[\mu_N-\h_{\mO_B}((L^{(i)})^{-1}(nq_1), \pi_*\mO_C(mp))\cong \mu_N-\h_{\mO_C}((\pi^*(L^{(i)})^{-1})(nNp), \mO_C(mp)).\] 
The upshot is that $n_i$ is the maximal integer $n$ for which there is a natural inclusion $\pi^*(L^{(i)})^{-1}(nq_1)\hookrightarrow\mO_C(mp)$ of $\mu_N$-sheaves. 

\medskip
According to Lemma \ref{lem:summand_stalk}, the image of $(\pi^*L^{(i)^{-1}})_p$ is $\mathfrak{m}^b_p$, where $b$ is the unique integer between $1$ and $N-1$ for which $b\equiv i\cdot a_1$(mod $N$). It follows that the image of the natural $\mu_N$-morphism $\pi^*(L^{(i)})^{-1}\to\mO_C$ factors through $\mO_C(-bp)$ (and not $\mO_C(-(b+1)p)$). Hence, the natural inclusion $\pi^*(L^{(i)})^{-1}(nq_1)\hookrightarrow\mO_C(mp)$ exists if and only if $nN-b\le m$, i.e. $n\leq\frac{m+b}{N}$. As $b=ia_1-\lfloor\frac{ia_1}{N} \rfloor N$, the desired result follows.
\end{proof}

\subsection{Reduced divisors on a hyperelliptic curve}\label{sec:reduced_divisors}
One result that we will frequently use hereafter is the unique \textit{reduced} presentation of divisor classes in a hyperelliptic Jacobian due to Mumford. This should not be confused with the usual notion of reducedness for a scheme. 
\begin{defn}\label{defn:reduced_div}
Let $B$ be a hyperelliptic curve of genus $\gamma$, and fix a Weierstrass point $q\in B$. A divisor $D$ of degree zero on $B$ is \textit{reduced} (resp., \textit{semi-reduced}) with respect to $q$ whenever the following conditions (resp., the first two listed) are satisfied: 
\begin{enumerate}
    \item $D=\sum_{i=1}^n p_i-nq$, where $p_i$ are all distinct from $q$.
    \item For every $p\in B$, $D$ is not supported simultaneously at $p$ and $\iota(p)$, where $\iota$ is the hyperelliptic involution.
    \item $n\le\gamma$. 
\end{enumerate}
\end{defn}

\begin{lem}[\cite{mumford1984tata}]\label{lem:mumford}
Let $B$ be a hyperelliptic curve of genus $\gamma$, and let $q \in B$ be a Weierstrass point. Each divisor class in $\Jac(B)$ contains a unique representative that is reduced with respect to $q$. 
\end{lem}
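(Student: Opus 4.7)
The plan is to treat existence and uniqueness separately. For existence I will run a constructive reduction on a Weierstrass affine model, using the hyperelliptic involution together with a Lagrange-interpolation step; for uniqueness I will leverage a rigidity property of rational functions on hyperelliptic curves.

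\textbf{Existence.} Starting from any representative of a given degree-zero divisor class, I collect the contribution at $q$ so as to rewrite the class as $E - (\deg E)\, q$ with $E$ effective and $q \notin \supp(E)$. Whenever $E$ contains both $p$ and $\iota(p)$ for some $p \ne q$, I apply the relation $p + \iota(p) \sim 2q$ --- valid because both sides are fibers of the hyperelliptic pencil $g^1_2$ and $q$ is a Weierstrass point --- to cancel the pair from $E$ and decrement the $q$-coefficient by $2$, leaving the class unchanged. After finitely many such swaps, $E$ is semi-reduced. If $\deg E$ still exceeds $\ga$, I pass to the affine model $y^2 = f(x)$ with $q$ at infinity and replace $E$ by a divisor of strictly smaller degree in the same class: choose a subset $E_0 \subseteq \supp(E)$ of size $\ga + 1$, interpolate via Lagrange a polynomial $v(x)$ of degree at most $\ga$ through the corresponding $y$-coordinates, and use the divisor $(y - v(x)) = E_0 + E' - (2\ga + 1)\, q$ combined with $E' + \iota(E') \sim 2(\deg E')\, q$ to rewrite the representative with a strictly smaller effective part. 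Iterating terminates at a reduced representative.

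\textbf{Uniqueness.} Suppose $E - nq \sim E' - n'q$ are both reduced, with $n \ge n'$ without loss of generality. The linear equivalence is realized by a rational function $g$ with $(g) = E' - E + (n - n')\, q$. Its pole divisor $F = E \setminus E'$, as a sub-divisor of the semi-reduced $E$, is itself semi-reduced with $\deg F \le n \le \ga$. The key input is a rigidity property of hyperelliptic curves: a non-constant rational function on $B$ whose pole divisor has no hyperelliptic-conjugate pair must have pole degree strictly greater than $\ga$. This follows from Clifford's theorem together with the classification of low-degree linear series on a hyperelliptic curve --- every complete $g^1_d$ with $d \le \ga$ is of the form $k \cdot g^1_2$ plus fixed base points, and its moving part at each fiber introduces a hyperelliptic-conjugate pair in the pole divisor. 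Applied to our $g$, this forces $F = 0$, so $g$ is constant; then $(g) = 0$ yields $E = E'$ and $n = n'$.

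The main obstacle is the rigidity input in the uniqueness step, which amounts to a version of Martens' theorem classifying low-degree pencils on a hyperelliptic curve. Once this is in hand the uniqueness argument collapses, and the existence half reduces to verifying that the reduction scheme terminates --- which is immediate from the strict degree decrease at each step.
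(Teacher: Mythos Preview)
The paper does not supply a proof of this lemma; it is quoted from Mumford's \emph{Tata Lectures} and used as a black box (the paper's own Corollary~\ref{cor:reduced} then \emph{deduces} the vanishing $h^0(\mathcal{O}_B(E))=1$ from the uniqueness assertion, rather than the other way around). So there is no in-paper proof to compare against, and I will assess your argument on its own.

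Your existence argument is essentially Cantor's reduction, which the paper records separately as Lemma~\ref{lem:Cantor_reduction}; the only cosmetic difference is that you peel off a sub-divisor $E_0$ of size $\gamma+1$ rather than reducing the full Mumford pair $(u,v)$ at once. One point to make explicit: when a point of $E_0$ occurs with multiplicity greater than one, plain Lagrange interpolation must be upgraded to Hermite interpolation matching the Taylor jet of the relevant branch of $\sqrt{f(x)}$. This goes through because semi-reducedness forces distinct points of $\supp(E_0)$ to have distinct $x$-coordinates, and Weierstrass points (where $x-a$ fails to be a uniformizer) can only occur with multiplicity one.

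Your uniqueness argument is correct and is genuinely different from the route the paper takes (which simply cites Mumford). The key input you invoke is the classical structure theorem for special linear series on a hyperelliptic curve: if $F$ is effective of degree $d\le\gamma$ with $h^0(\mathcal{O}_B(F))\ge 2$, then Riemann--Roch forces $F$ to be special, and every divisor in $|F|$---in particular $F$ itself---is of the form $(\text{sum of }r\text{ hyperelliptic fibers})+(\text{base locus})$ with $r\ge 1$. Since a hyperelliptic fiber is either $p+\iota(p)$ or $2w$ for a Weierstrass point $w$, this contradicts the semi-reducedness of the pole divisor $(E-E')_+\le E$. That closes the argument cleanly. (Minor notational quibble: write the pole divisor as $(E-E')_+$ rather than $E\setminus E'$, since $E$ and $E'$ may overlap with multiplicity.)
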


We shall use the uniqueness of reduced representatives in the following form:
\begin{cor}\label{cor:reduced}
Every effective divisor $D=\sum_{i=1}^k p_i$ on $B$ for which $D-kq$ is reduced satisfies $h^0(\mO_B(D))=1$ and $h^0(\mO_B(D-q))=0$.   
\end{cor}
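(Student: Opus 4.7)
The plan is to reduce everything to proving $h^0(\mO_B(D-q))=0$, from which the first equality follows since $D$ effective forces $h^0(\mO_B(D)) \ge 1$, while twisting by the ideal sheaf of $q$ yields $h^0(\mO_B(D))-h^0(\mO_B(D-q))\le 1$.

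To prove the vanishing, I argue by contradiction. Suppose $h^0(\mO_B(D-q)) \ge 1$, and let $E_1$ be an effective divisor of degree $k-1$ with $E_1 \sim D-q$. The main engine is the uniqueness of reduced representatives (Lemma~\ref{lem:mumford}) applied to the degree-zero divisor $E_1-(k-1)q$, which lies in the same class as the reduced divisor $D-kq$ but presents the class with $k-1$ rather than $k$ positive points. If $E_1$ were semi-reduced, then $E_1-(k-1)q$ would itself be reduced (its positive part has degree $k-1 \le \gamma$), contradicting uniqueness. Hence $E_1$ is not semi-reduced: either $q\in\supp E_1$, or $E_1$ contains a hyperelliptic pair $p+\iota(p)$ with $p\ne\iota(p)$. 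In the latter case I invoke $p+\iota(p)\sim 2q$ (valid since $q$ is a Weierstrass point) to replace the pair by $2q$, producing an effective divisor in the class of $E_1$ containing $2q$. In either situation, peeling off one copy of $q$ yields an effective $E_2$ of degree $k-2$ with $E_2\sim D-2q$.

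Iterating this construction, I obtain effective divisors $E_j$ of degree $k-j$ with $E_j \sim D-jq$ for every $j=1,\ldots,k$; the same uniqueness argument applied to $E_j-(k-j)q \sim D-kq$ forces the iteration through each step. At $j=k$ one has $E_k=0$, so $D\sim kq$; applying Lemma~\ref{lem:mumford} a final time (both $D-kq$ and $0$ are reduced representatives of the trivial class) forces $D=kq$, contradicting the hypothesis that each $p_i\ne q$ (the case $k=0$ is vacuous).

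The step requiring the most care is the iteration itself: I must ensure the semi-reducedness dichotomy holds at every stage, including when $\deg E_j$ is too small (e.g., equal to $1$) to accommodate a hyperelliptic pair. But the uniqueness comparison rules out semi-reducedness of $E_j$ regardless, which in the small-degree case forces $q\in\supp E_j$ immediately, so that no pencil modification is needed before peeling off $q$ and the iteration proceeds uniformly down to $j=k$.
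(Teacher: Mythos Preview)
Your proof is correct; like the paper's, it rests entirely on the uniqueness of reduced representatives (Lemma~\ref{lem:mumford}). One small oversight: the failure of semi-reducedness for $E_j$ could also arise from $E_j \ge 2p$ with $p\ne q$ a Weierstrass point (so $p=\iota(p)$), a case your phrase ``with $p\ne\iota(p)$'' excludes---but $2p\sim 2q$ holds there too, so your peeling step goes through unchanged once you drop that restriction.

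The paper's execution is more direct than your descent. It first shows $h^0(\mO_B(D))=1$ by taking any effective $D'\ne D$ in $|D|$, stripping all copies of $q$ and all hyperelliptic pairs from $D'$ in a single pass, and observing that the result is a reduced representative of $[D-kq]$ with strictly fewer than $k$ positive points---contradicting uniqueness; it then asserts the same argument handles $h^0(\mO_B(D-q))=0$. Your version unrolls the reduction one point at a time, which is more explicit but longer, and you invert the order of the two claims, deducing $h^0(\mO_B(D))=1$ from $h^0(\mO_B(D-q))=0$ via the elementary bound $h^0(\mO_B(D))-h^0(\mO_B(D-q))\le 1$---a clean reduction.
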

\begin{proof}
For the sake of argument, suppose $h^0(\mO_B(D))>1$; then there exists a non-constant function $f$ whose pole divisor is bounded above by $D$. Up to reducing $k$, we may assume the pole divisor of $f$ is precisely $D$. Thus, $D\sim D^{\pr}$, where $D^{\pr}$ is also effective. Displacing multiples of $q$ to the left-hand side of the linear equivalence if necessary (and then subtracting appropriate multiples of $q$ on both sides), we obtain two distinct reduced divisors that are linearly equivalent, a contradiction. The same argument also yields the second assertion.   
\end{proof}

Following Mumford, every semi-reduced divisor on an odd hyperelliptic curve may be presented in a unique way by a pair of univariate polynomials:
\begin{lem}\label{lem:Mumford_presentation}
Let $B$ be a genus $\ga$ hyperelliptic curve with equation $y^2=f(x)$, where $f(x)$ is a monic and separable polynomial of degree $2\ga+1$. 
Let $D=\sum_{i=1}^sp_i-sq$ be a divisor on $B$ that is semi-reduced with respect to $q$.\footnote{We always assume in practice that $q$ is the unique point over $\infty$.} Then there exists a unique pair of polynomials $u(x),v(x)\in k[x]$ satisfying the following conditions:
\begin{enumerate}
    \item $u$ is monic and $\deg u(x)=s$;
    \item $\deg v(x)< \deg u(x)$;
    \item $u(x)| (f(x)-v^2(x))$; and
    \item $p_i=(a_i,v(a_i))$, where $a_1\hh a_s$ are the roots of $u(x)$.  
\end{enumerate}
\end{lem}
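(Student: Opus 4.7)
The plan is to construct $u$ canonically from the support of $D^+ := \sum_i p_i$ and then build $v$ by Hermite-type interpolation from the local power series of $y$ at each distinct support point. For each distinct $p = (a_p, b_p)$ appearing in the support with multiplicity $m_p$, semi-reducedness condition (2) guarantees that the sign of $b_p$ is unambiguous, and moreover that if $p$ is a Weierstrass point (equivalently $b_p = 0$, equivalently $p = \iota(p)$) then necessarily $m_p = 1$. I would then set
\[u(x) := \prod_p (x - a_p)^{m_p},\]
which is monic of degree $s = \sum_p m_p$. To build $v$: at each non-Weierstrass $p$, the element $x - a_p$ is a local uniformizer on $B$, so $y$ admits a unique power series expansion $b_p + c_{p,1}(x - a_p) + c_{p,2}(x - a_p)^2 + \cdots$ in $\widehat{\mathcal{O}}_{B,p}$ whose coefficients are determined inductively by $y^2 = f(x)$ together with the branch choice $b_p$; let $v_p(x) \in k[x]$ be its truncation to degree $< m_p$. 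At each Weierstrass $p$ in the support, simply set $v_p(x) := 0$. Since the ideals $(x - a_p)^{m_p}$ for distinct $a_p$ are pairwise coprime, the Chinese Remainder Theorem produces a unique $v \in k[x]$ of degree $< s$ with $v \equiv v_p \pmod{(x - a_p)^{m_p}}$ for every $p$.

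Condition (4) is immediate from $v(a_p) = v_p(a_p) = b_p$ in both cases. For condition (3), I would factor $f(x) - v(x)^2 = (y - v(x))(y + v(x))$ as regular functions on the affine curve $B \setminus \{q\}$. At each non-Weierstrass $p$, the congruence $v \equiv v_p \pmod{(x - a_p)^{m_p}}$ combined with the construction of $v_p$ forces $\operatorname{ord}_p(y - v) \geq m_p$; since $(y + v)(p) = 2b_p \neq 0$, it follows that $\operatorname{ord}_p(f - v^2) \geq m_p$, and as $x - a_p$ is a uniformizer at $p$, this translates to $(x - a_p)^{m_p} \mid (f - v^2)$ in $k[x]$. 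At each Weierstrass $p$ (where $m_p = 1$), the divisibility $(x - a_p) \mid (f - v^2)$ follows directly from $f(a_p) = b_p^2 = 0 = v(a_p)^2$. Multiplying over all distinct $a_p$ gives $u \mid (f - v^2)$, since $D$ being semi-reduced ensures each root $a_p$ of $u$ appears only from a single support point.

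For uniqueness, conditions (1) and (4) force $u$ to equal the monic polynomial defined above. Given this $u$, condition (4) pins down $v(a_p) = b_p$ at each distinct $a_p$, and condition (3) combined with the factorization above forces $\operatorname{ord}_p(y - v) \geq m_p$ at each non-Weierstrass support point (since $y+v$ is a unit there), hence determines $v$ modulo $(x - a_p)^{m_p}$ as the truncated power series for $y$. At Weierstrass support points the constraint $v(a_p) = 0$ alone determines $v \bmod (x - a_p)$. The Chinese Remainder Theorem then determines $v \bmod u$, and the degree bound (2) pins it down absolutely. The main bookkeeping subtlety lies at Weierstrass support points, where $x - a_p$ fails to be a uniformizer on $B$ — but semi-reducedness restricts such points to have $m_p = 1$, which trivializes the local analysis there, so no genuine obstacle arises.
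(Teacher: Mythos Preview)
Your argument is correct and complete. The paper itself does not supply a proof of this lemma: it is stated as a known result attributed to Mumford's \emph{Tata Lectures on Theta II}, and the exposition moves directly on to Cantor's reduction algorithm. Your construction---taking $u$ as the product of $(x-a_p)^{m_p}$ over distinct support points, building $v$ by Hermite interpolation of the local power series of $y$, and invoking the Chinese Remainder Theorem---is precisely the standard proof found in the literature (Mumford, Cantor, and the hyperelliptic-curve cryptography texts). The handling of Weierstrass support points via the semi-reducedness constraint $m_p=1$ is the right way to dispatch that case, and your uniqueness argument correctly uses the factorization $f-v^2=(y-v)(y+v)$ together with the unit property of $y+v$ at non-Weierstrass points to recover $v \bmod (x-a_p)^{m_p}$.
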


On the other hand, Cantor's algorithm (\cite{cantor1987computing}) explains how to compute the {\it reduced} presentation of a semi-reduced divisor:

\begin{lem}\label{lem:Cantor_reduction}
Keeping the same setup as in Lemma~\ref{lem:Mumford_presentation}, suppose that $(u(x),v(x))$ represents the semi-reduced divisor $D$. Set $u_1(x)=\frac{f(x)-v^2(x)}{u(x)}$, renormalizing the highest-order coefficient of $u_1(x)$ to obtain a monic polynomial if necessary; and choose $v_1(x)\equiv -v(x) \, (\text{mod }u_1(x))$ such that $\deg v_1<\deg u_1$. Then $D^{\pr}=\sum_j (p^{\pr}_j-q)\sim D$, where $p^{\pr}_j=(b_j,v_1(b_j))$ and the $b_j$ range over all roots of $u_1(x)$. Moreover, $D^{\pr}$ is semi-reduced with respect to $q$.  
\end{lem}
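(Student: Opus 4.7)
The plan is to exhibit a rational function on $B$ whose principal divisor realizes the equivalence $D\sim D'$. The natural candidate is $h := y - v(x) \in k(B)$, whose product with its hyperelliptic conjugate $y+v(x)$ equals $y^2 - v(x)^2 = f(x) - v(x)^2$. By hypothesis (3) of Lemma~\ref{lem:Mumford_presentation}, this factors as $u(x) \cdot u_1(x)$ up to the nonzero constant absorbed when we renormalize $u_1$ to be monic.

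The heart of the argument is computing $\dv(h)$ on $B\setminus\{q\}$. Any affine zero of $h$ must have the form $(c,v(c))$ with $c$ a root of $f-v^2$; conversely every such $c$ produces a zero. At a non-Weierstrass zero $(c,v(c))$ (i.e.\ $v(c)\neq 0$), the conjugate $y+v(x)$ is a unit and $x-c$ is a local parameter, so $\ord_{(c,v(c))}(h)$ equals the multiplicity of $c$ as a root of $h\cdot(y+v(x)) = f-v^2$, namely the sum of its multiplicities in $u$ and in $u_1$. At a Weierstrass zero $(c,0)$, separability of $f$ forces $c$ to be a simple root of $f$, hence a simple root of exactly one of $u$ or $u_1$; a local computation using $y$ as parameter then yields $\ord_{(c,0)}(h)=1$, which still equals the same sum of multiplicities. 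Since $v_1 \equiv -v \pmod{u_1}$ gives $v_1(b_j)=-v(b_j)$ at each root $b_j$ of $u_1$, the point $(b_j,v(b_j))$ coincides with $\iota(p'_j)$.

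Collecting these contributions and balancing with a single pole at $q$ (forced by $\deg\dv(h)=0$, since $h$ has no other pole or zero), we obtain
\[
\dv(h) \;=\; \sum_{i=1}^{s} p_i \;+\; \sum_j \iota(p'_j) \;-\; \bigl(s+\deg u_1\bigr)\, q.
\]
Combined with the principal divisors $\dv(x-b_j) = p'_j + \iota(p'_j) - 2q$, which sum to $\dv(u_1(x)) = \sum_j \bigl(p'_j+\iota(p'_j)\bigr) - 2\deg u_1\cdot q$, subtraction gives
\[
\dv\!\left(\frac{y-v(x)}{u_1(x)}\right) \;=\; \sum_i (p_i-q) \;-\; \sum_j (p'_j-q) \;=\; D-D',
\]
so $D\sim D'$. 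Semi-reducedness of $D'$ is then immediate: each $p'_j\neq q$ since $b_j\in k$, and the support of $D'$ consists of the distinct points $(b,v_1(b))$ as $b$ ranges over the distinct roots of $u_1$, so no two points in the support share an $x$-coordinate and in particular none are hyperelliptic conjugates of one another.

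The main obstacle is the multiplicity bookkeeping for $h$ at Weierstrass points of $B$, where $x-c$ is no longer a local parameter; here one must carefully exploit the separability of $f$ to rule out $c$ being simultaneously a root of $u$ and $u_1$. All other steps—the degree-balanced expression for $\dv(h)$, the subtraction against $\dv(u_1(x))$, and the verification of semi-reducedness—are essentially bookkeeping once this local analysis is in hand.
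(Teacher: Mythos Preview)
Your proof is correct. The paper itself does not supply a proof of this lemma; it simply attributes the result to Cantor \cite{cantor1987computing} and states it for later use. Your argument via the principal divisor of $y-v(x)$ is in fact the standard one (it is essentially what appears in Cantor's paper and in Mumford's Tata lectures), so there is no meaningful methodological difference to report.

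One small point worth making explicit: your final sentence verifies condition~(2) of Definition~\ref{defn:reduced_div} for non-Weierstrass points, but the usual reading of that condition also requires that any Weierstrass point in the support of $D'$ appear with multiplicity exactly~$1$. You have already done the work for this in your Weierstrass-zero analysis---separability of $f$ forces such a $b_j$ to be a \emph{simple} root of $f-v^2$, hence a simple root of $u_1$---so the multiplicity-$1$ condition follows immediately. It would strengthen the write-up to say so in the semi-reducedness paragraph rather than leaving it implicit.
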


Notice that $\deg u_1(x)=\max\{2\ga+1,2\deg v(x)\}-\deg u(x)<\deg u(x)$ and that $v_1(x)$ may be chosen so that $\deg v_1(x)<\deg u_1(x)$. Moreover, $u_1| (f-v^2_1)$. After iterating Cantor's algorithm finitely many times, we arrive at a pair $(u_n(x),v_n(x))$ that represents a $q$-reduced divisor. In particular, Cantor's algorithm terminates when $\deg u_i\le \ga$.

\medskip
Now say that applying Cantor's algorithm $T$ times to a Mumford presentation $(u_0(x),v_0(x))$ of a divisor that is semi-reduced with respect to $q$ yields a pair
$(u_T(x),v_T(x))$ that represents a $q$-reduced divisor. Then \[u_{i}(x)\sim \frac{f(x)-v_{i-1}^2(x)}{u_{i-1}(x)}\]
for every $i=1\hh T$.\footnote{Here we use $\sim$ to denote equality up to a scalar.}
In particular, we have
\[u_{T}(x)\sim\frac{f(x)-v_{T-1}^2(x)}{u_{T-1}(x)}\sim \frac{f(x)-v_{T-1}^2(x)}{f(x)-v_{T-2}^2(x)}u_{T-2}(x).\]
By iteratively replacing $u_{i}(x)$ by $\frac{f(x)-v_{i-1}^2(x)}{u_{i-1}(x)}$, we see that
\begin{equation}\label{eqn:cantor_reduction}
\begin{split}
u_T(x)\prod_{k=1}^{{T\over 2}}(f(x)-v_{2k-2}^2(x))&\sim u_0(x)\prod_{k=1}^{{T\over 2}}(f(x)-v_{2k-1}^2(x)) \text{ when $T$ is even; and}\\
u_T(x)u_0(x)\prod_{k=1}^{\lfloor{T\over 2}\rfloor}(f(x)-v_{2k-1}^2(x))&\sim \prod_{k=0}^{\lfloor{T\over 2}\rfloor}(f(x)-v_{2k}^2(x)) \text{ when $T$ is odd}.
\end{split}
\end{equation}

We will use Cantor's algorithm to analyze the existence of torsion points of prescribed order on $B$, as follows. Suppose $N\cd (p-q)\sim 0$, where $N\ge 2\ga +1$; and let $p=(a,b)$ be a point for which $f(a) \neq 0$. Starting with the pair $(u_0(x),v_0(x))=((x-a)^N,v(x))$ that represents $N\cd (p-q)$ and applying the algorithm repeatedly, we eventually arrive at $(u_T(x),v_T(x))=(1,0)$. Indeed, if $u_T(x)$ is not constant, then $(u_T(x),v_T(x))$ represents a \textit{nonzero} divisor of the form $\sum ((a_i,b_i)-q)$, where the $a_i$ range over all roots of $u_T(x)$ and $b_i=v_T(a_i)$, which cannot be the reduced presentation of the trivial divisor. 

\medskip
When $(a,b)$ is torsion of order $N\ge 2\ga+1$, the equations \eqref{eqn:cantor_reduction} specialize to
\begin{equation}\label{eqn:even_tor}
(x-a)^N\prod_{k=1}^{{T\over 2}}(f(x)-v_{2k-1}^2(x))\sim\prod_{k=0}^{{T\over 2}-1}(f(x)-v_{2k}^2(x))
\end{equation}
and 
\begin{equation}\label{eqn:odd_tor}
(x-a)^N\prod_{k=1}^{\lfloor{T\over 2}\rfloor}(f(x)-v_{2k-1}^2(x))\sim \prod_{k=0}^{\lfloor{T\over 2}\rfloor}(f(x)-v_{2k}^2(x))
\end{equation}
respectively.
The degrees of the $v_i$ satisfy recursively-defined inequalities; namely, we have $\deg v_i<\sum_{k=0}^{i}(-1)^{i-k}R_k$, where the integers $R_k$ are prescribed by 
$R_k=\max\{2\ga+1,2\deg v_{k-1}\}$, subject to $R_0=N$.

\begin{lem}
Suppose that $p-q$ is an $N$-torsion, $N\ge 2\ga+1$. Starting with the Mumford presentation $((x-a)^N,v(x))$ of $N\cd (p-q)$, Cantor's algorithm terminates in at most $T$ steps, where $T=
\lceil\frac{N-2\ga}{2}\rceil$. 
\end{lem}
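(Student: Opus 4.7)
The plan is to track the sequence of degrees $d_i := \deg u_i$, which satisfy $d_0 = N$ and the recursion $d_i = R_i - d_{i-1}$ with $R_i = \max\{2\gamma+1, 2\deg v_{i-1}\}$ highlighted in the excerpt. The key decrement lemma I would establish first is: whenever the algorithm has not yet terminated at step $k$, meaning $d_k > \gamma$, one has $d_k \le d_{k-1} - 2$. This follows by case analysis on the $\max$ defining $R_k$. If $2\deg v_{k-1} \le 2\gamma+1$, then $R_k = 2\gamma+1$ and $d_k = 2\gamma+1 - d_{k-1}$; coupling this with $d_{k-1} \ge \gamma+1$ (which holds because $d_0 = N \ge 2\gamma+1$ and, by induction, the algorithm would otherwise have already terminated at an earlier step) forces $d_k \le \gamma$, contradicting the hypothesis. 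Hence $\deg v_{k-1} \ge \gamma+1$ and $R_k = 2\deg v_{k-1}$; combining with the standing inequality $\deg v_{k-1} \le d_{k-1} - 1$ built into the algorithm yields $d_k \le 2(d_{k-1}-1) - d_{k-1} = d_{k-1} - 2$.

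Next, I would invoke the torsion hypothesis to pin down the terminal degree $d_T$. Because $N(p-q) \sim 0$ on $B$, the reduced pair produced by Cantor's algorithm must represent the trivial divisor class, so by Lemma \ref{lem:mumford} we have $(u_T, v_T) = (1, 0)$ and in particular $d_T = 0$. Plugging this into the recursion $d_T = R_T - d_{T-1}$ yields $d_{T-1} = R_T \ge 2\gamma+1$. This is where the torsion assumption is genuinely used: without it we would only be able to conclude $d_{T-1} \ge \gamma+1$, which would give a strictly weaker bound of order $(N-\gamma)/2$ in what follows.

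Combining the two ingredients, iterated application of the decrement lemma to each of the $T-1$ non-terminating steps gives $d_{T-1} \le d_0 - 2(T-1) = N - 2(T-1)$. Sandwiching this against $d_{T-1} \ge 2\gamma+1$ forces $2\gamma+1 \le N - 2(T-1)$, i.e., $T \le (N - 2\gamma + 1)/2$. Since $T$ is an integer, this is equivalent to $T \le \lfloor(N-2\gamma+1)/2\rfloor = \lceil (N-2\gamma)/2\rceil$, which is exactly the claimed bound. The only real point of care in the argument is bookkeeping the induction behind the decrement lemma, namely that $d_{k-1} > \gamma$ throughout; this is automatic once one observes that $d_0 = N \ge 2\gamma+1 > \gamma$ and that $T$ is by definition the first index at which $d_k \le \gamma$, so for $k \le T-1$ the inductive hypothesis $d_{k-1} > \gamma$ is maintained.
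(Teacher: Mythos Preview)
Your proof is correct and follows essentially the same route as the paper's: the paper's parity observation $\deg u_i \equiv \deg u_{i+1} \pmod 2$ whenever $2\deg v_i > 2\gamma+1$ is exactly your decrement lemma in disguise (strict decrease plus fixed parity forces a drop of at least $2$), and the alternative ``otherwise the algorithm terminates after the subsequent step'' is your Case~$R_k = 2\gamma+1$. Your write-up is more explicit than the paper's sketch, in particular in isolating where the torsion hypothesis enters (namely to force $d_T = 0$ and hence $d_{T-1} = R_T \ge 2\gamma+1$), which the paper leaves implicit.
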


\begin{proof}
It suffices to see that $\deg u_i\equiv \deg u_{i+1}$(mod 2) whenever $2\deg v_i(x)>2\ga+ 1$; otherwise the algorithm terminates after the subsequent step.  
\end{proof}

\begin{cor}\label{cor:small_torsion} Suppose $\ga \geq 2$, and that $f(x)$ is a monic separable polynomial of degree $2\ga+1$.
The hyperelliptic curve 
$y^2=f(x)$ admits an $N$-torsion point with 
\begin{enumerate}
    \item $N=2\ga+1$ if and only if $f(x)=(x-a)^{2\ga+1}+v^2_0(x)$ with $\deg v_0(x) \le \ga$;
    \item $N=2\ga+2$ if and only if $f(x)=(x-a)^{2\ga+2}+v^2_0(x)$ with $\deg v_0(x) =\ga+1$;
    \item $N=2\ga+3$ if and only if $f(x)=\frac{v_0^2(x)-(x-a)^{2\ga+3}\cd v_1^2(x)}{1-(x-a)^{2\ga+3}}$ with $\deg v_0(x) =2\ga+2$ and $\deg v_1(x)\le\ga$.
\end{enumerate}
\end{cor}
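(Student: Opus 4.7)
The plan is to trace Cantor's reduction algorithm, starting from the Mumford presentation of $N \cdot (p-q)$ for an assumed torsion point $p=(a,b)$ with $b \ne 0$, and to read off the resulting identity on $f$. Fix such a $p$, and let $v_0(x) \in k[x]$ be the unique polynomial with $\deg v_0 < N$, $v_0(a) = b$, and $(x-a)^N \mid f-v_0^2$; then $((x-a)^N, v_0)$ represents $N(p-q)$. By the preceding lemma the algorithm terminates in at most $T = \lceil (N-2\gamma)/2 \rceil$ steps, and $p-q$ is $N$-torsion if and only if the terminal pair is $(1,0)$. The three cases then correspond to $T \in \{1,1,2\}$ and are analyzed by specializing the identities (\ref{eqn:even_tor}) and (\ref{eqn:odd_tor}).

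For $N = 2\gamma+1$ and $N = 2\gamma+2$ we have $T = 1$, so termination with $u_1 = 1$ specializes the odd-$T$ equation (\ref{eqn:odd_tor}) to $f(x) - v_0^2(x) \sim (x-a)^N$, i.e.\ $f = v_0^2 + \lambda(x-a)^N$ for some $\lambda \in k^\times$. When $N = 2\gamma+1$, monicity of $f$ together with $\deg v_0^2$ being even and $\deg f = 2\gamma+1$ being odd forces $\deg v_0 \le \gamma$ and $\lambda = 1$. When $N = 2\gamma+2 > \deg f$, the top-degree term of $\lambda(x-a)^{2\gamma+2}$ must be killed by a matching term in $v_0^2$; since $\deg v_0^2$ is even, this forces $\deg v_0 = \gamma+1$, and absorbing $\lambda$ into $v_0$ yields the stated normal form $f = (x-a)^{2\gamma+2} + v_0^2$.

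For $N = 2\gamma+3$ we have $T = 2$. Termination at step $T' = 1$ would again demand $f = v_0^2 + \lambda(x-a)^{2\gamma+3}$, but the odd exponent $2\gamma+3$ together with the even parity of $\deg v_0^2$ prevents any cancellation of the leading $(x-a)^{2\gamma+3}$ term, so actually $T' = 2$. Equation (\ref{eqn:even_tor}) with $T = 2$ then yields $(x-a)^{2\gamma+3}(f - v_1^2) \sim f - v_0^2$; solving for $f$ and normalizing a scalar gives the claimed expression. Degree-matching on both sides forces $\deg v_0 = 2\gamma+2$ (the alternative contribution $2\gamma+3 + 2\deg v_1$ to the top degree on the right has wrong parity), and the requirement that the second Cantor step produce $u_2 = 1$, together with $\deg u_1 = 2\gamma+1$, forces $\deg(f - v_1^2) = 2\gamma+1$ and hence $\deg v_1 \le \gamma$.

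Each converse is a direct verification: given $f$ in the stated form, the point $p := (a, v_0(a))$ lies on $B$ because $f(a) = v_0(a)^2$ in every case, and running Cantor's algorithm on $((x-a)^N, v_0)$ produces $(1,0)$ after $T$ steps by the very identities from which $f$ was solved, so $N(p-q) \sim 0$. The main obstacle is the bookkeeping of scalars and leading-coefficient cancellations — particularly in the $N = 2\gamma+2$ case, where a factor $(x-a)^{2\gamma+2}$ of degree $2\gamma+2$ must coexist with a monic $f$ of degree $2\gamma+1$ — together with ensuring that generic choices of parameters produce a separable $f$ (so that $B$ is smooth of genus $\gamma$), which we handle by invoking the separability results of Appendix~\ref{sec:separability}.
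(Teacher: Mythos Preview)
Your approach is the same as the paper's: start from the Mumford presentation of $N(p-q)$, run Cantor's reduction, and specialize the identities \eqref{eqn:even_tor}--\eqref{eqn:odd_tor}. The forward direction is handled essentially as in the paper, with more detail.

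There is, however, a genuine gap in your converse. You verify only that $N(p-q)\sim 0$; you never argue that the order of $p-q$ is \emph{exactly} $N$. The paper closes this gap by invoking Zarhin's theorem \cite[Cor.~2.8]{zarhin2019division}: a point class $[p-q]$ on a genus-$\ga$ hyperelliptic Jacobian cannot have order in the range $3\le N'\le 2\ga$. For $N\in\{2\ga+1,\,2\ga+2,\,2\ga+3\}$ every proper divisor of $N$ exceeding $2$ lies in this range, so Zarhin rules them out; order $1$ is excluded because $p\ne q$, and order $2$ (relevant only for $N=2\ga+2$) is excluded because $v_0(a)=0$ would force $(x-a)^2\mid f$, contradicting separability. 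Without the appeal to Zarhin your argument does not exclude, for instance, that on a genus-$4$ curve a point with $9(p-q)\sim 0$ actually has order $3$.

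Two smaller points. First, your phrase ``absorbing $\lambda$ into $v_0$'' in the $N=2\ga+2$ case is not literally correct: scaling $v_0$ scales $v_0^2$, which is not the same as absorbing a constant multiple of $(x-a)^{2\ga+2}$. The normalization of the scalar deserves a word. Second, your closing remark about needing Appendix~\ref{sec:separability} to ensure separability is misplaced: $f$ is \emph{hypothesized} to be monic and separable of degree $2\ga+1$, so there is nothing to construct and no genericity to invoke. The Appendix is used elsewhere in the paper to manufacture explicit examples, not in this corollary.
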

\begin{proof}
In the first two cases, the algorithm terminates in one step; while in the last case, it must terminate in two steps. Applying Equations \ref{eqn:even_tor} and \ref{eqn:odd_tor}, respectively, we derive the shape of $f(x)$ in each of these three cases. 

\medskip
More precisely, Cantor's algorithm implies that $N\cd(p-q)\sim 0$ in each of these cases. On the other hand, the fact that $B$ has no $N^{\pr}$-torsion points for any $3\le N^{\pr}\le 2\ga$ (\cite[Corollary 2.8]{zarhin2019division}) implies 
$p-q$ cannot have order smaller than $N$.  
\end{proof}

\begin{rem}
The case of $(2\ga+1)$-torsion points is covered by \cite[Theorem 1]{bekker2020torsion}.
\end{rem}
Whenever $N\geq 2\ga+4$, it is no longer possible to give a uniform characterization of odd-degree hyperelliptic curves with $N$-torsion points, as \textit{a priori} there are various 
distinct ways in which the reduction algorithm might terminate. We nevertheless have the following existence result:
\begin{ex}\label{ex:torsion_1}
Let $1\le s\le\ga$, and set $f(x)=\frac{-1}{2}[(x^{\ga+s}-x^{\ga-s+1}+1)^2-x^{2(\ga+s)}]$, and let $p=(0,\sqrt{\frac{-1}{2}})$. Then, $f(x)$ is a monic, separable polynomial of degree $2\ga+1$, and $(p-q)$ is a $(2\ga+2s)$-torsion point of the hyperelliptic curve $y^2=f(x)$. 

\medskip
Indeed, the fact that $f$ is monic is clear. 
On the other hand, to show that
$f(x)=\frac{1}{2}(2x^{\ga+s}-x^{\ga-s+1}+1)(x^{\ga-s+1}-1)$ is separable it suffices to show that $g(x)=2x^{\ga+s}-x^{\ga-s+1}+1$ is separable, as $f$ clearly does not vanish at $(\ga-s+1)$-th roots of unity; and the separability of $g(x)$ may be checked directly.\footnote{Indeed, $g(x)=g^{\pr}(x)=0$ would imply that $x^{\ga-s+1}=\frac{\ga+s}{2s-1}$ and $x^{2s-1}=\frac{\ga-s+1}{2\ga+2s}$, which is impossible.}  

\medskip
Applying the reduction algorithm to $(x^{2\ga+2s},\sqrt{\frac{-1}{2}}(x^{\ga+s}-x^{\ga-s+1}+1))$, we see that $(2\ga+2s)\cd(p-q)\sim 0$. Since $\ga+s\le 2\ga$, by \cite[Corollary 2.8]{zarhin2019division} we conclude $(p-q)$ has order $2\ga+2s$.   
\end{ex}

\subsection{Codimension estimates}
\label{subsec:effwt}

Several statistics have been proposed to quantify how special a given semigroup is, with the aim of predicting the dimension of the locus $\mathcal{M}^{\rm S}_{g,1} \subseteq \mathcal{M}_{g,1}$ of marked curves realizing that semigroup (when it is nonempty). Whether a semigroup is realizable or not, these statistics serve a useful function in classifying semigroups by their complexity in various senses. Such statistics include: the \emph{weight} $\operatorname{wt}({\rm S})$, equal to the number of pairs $0 < a < b$ of integers with $a \in {\rm S}$ and $b \not\in {\rm S}$; the \emph{effective weight} $\operatorname{ewt}({\rm S})$, equal to the number of such pairs in which $a$ is a \emph{generator}; the number $\dim T^{1,+}(k[{\rm S}])$ defined and studied in \cite{CFSV} in terms of deformations of monomial curves; and Rim--Vitulli's number $\lambda({\rm S})$, given by the number of gaps $b \not\in {\rm S}$ such that $a+b \in {\rm S}$ for all positive elements $a \in {\rm S}$. All these statistics are combinatorial in nature. They are related by the following chain of inequalities (see \cite[p. 2]{CFSV}), in which $g$ denotes the genus of ${\rm S}$:
\[
g - \lambda({\rm S}) \leq \operatorname{codim} \mathcal{M}^{\rm S}_{g,1} \leq g-\lambda({\rm S}) + \dim T^{1,+}(k[{\rm S}]) \leq \operatorname{ewt}({\rm S}) \leq \operatorname{wt}({\rm S})
\]
Here $\operatorname{codim} \mathcal{M}_{g,1}^{\rm S}$ should be omitted when $\mathcal{M}_{g,1}^{\rm S}$ is empty. 
Each of these inequalities may be strict, and some of the limiting cases in which equality holds are well-understood.

\medskip
As mentioned in the introduction, there are few cases where the emptiness or nonemptiness of $\mathcal{M}^{\rm S}_{g,1}$ may be ascertained via  a combinatorial analysis of ${\rm S}$. One such case is when $\operatorname{ewt}({\rm S}) \leq g-1$, in which case $\mathcal{M}_{g,1}^{\rm S}$ is known to have a component of codimension $\operatorname{ewt}({\rm S})$ \cite{Pfl}. Generally speaking, the larger the 
codimension, 
the more complex the semigroup is. 
We will see that almost all multiplication profiles constructed in this paper produce semigroups ${\rm S}$ with $\operatorname{ewt}({\rm S}) \geq g$ and therefore give realizability results not covered by those of \cite{Pfl}. Furthermore, many of our examples satisfy $\operatorname{ewt}({\rm S}) > 3g-2 = \dim \mathcal{M}_{g,1}$, thereby furnishing a large class of examples where ${\rm S}$ is realizable and $\operatorname{codim}\mathcal{M}^{\rm S}_{g,1} < \operatorname{ewt}({\rm S})$. 
The only other class of such examples of which we are aware are the \emph{Castelnuovo semigroups} analyzed in \cite{PflCast}.


\section{Weierstrass-realizability for $2N$-semigroups}\label{sec:Weierstrass-realizability}
\begin{defn}
A numerical semigroup ${\rm S}$ is \textit{Weierstrass-realizable} whenever there exists a pointed smooth projective curve $(C,p)$ for which ${\rm S}={\rm S}(C,p)$ is the Weierstrass semigroup of $C$ at $p$.    
\end{defn}

In investigating the Weierstrass-realizability of a given semigroup ${\rm S}$, it is often to useful to work modulo a fixed element $M \in {\rm S}$. Hereafter, we will refer any numerical semigroup containing a given positive integer $M$ as an \textit{$M$-semigroup}. In practice, $M$ will usually be the {\it multiplicity} of the semigroup ${\rm S}$, i.e., the smallest nonzero element in ${\rm S}$.

\begin{defn}\label{defn:basis}
Given an $M$-semigroup ${\rm S}$, we set 
\[e_i=e_i({\rm S}):=\min\{s\in {\rm S} \backslash \{0\} \mid s\equiv i(\text{mod }M)\}.\] 
The \textit{standard basis} modulo $M$ of ${\rm S}$ is the $M$-dimensional vector $(M, e_1\hh e_{M-1})$.
\end{defn}


\noindent \textbf{Our strategy.} We will certify the Weierstrass-realizability of prescribed $2N$-semigroups ${\rm S}$ by constructing cyclic covers of degree-$N$ of hyperelliptic curves and computing the Weierstrass semigroups at totally ramified points lying over Weierstrass points of the targets. 
We focus on cyclic covers of the following form.
\begin{si}\label{si:cyclic_hyperell}
Let $\pi:C=\Sp(\bigoplus_{i=0}^{N-1}(L^{(i)})^{-1})\to B$ be a cyclic Galois cover of degree $N$ between smooth projective curves, where $B$ is a hyperelliptic curve of genus $\gamma$ and $g(C)=g$. Here $L$ is a line bundle on $B$ for which $L^{\ot N}\cong \mO_B(D)$ (for some fixed choice of isomorphism) where $D$ is an effective divisor of the form $aq+\sum a_iq_i$, and $L^{(i)}=L^i(-[\frac{iD}{N}])$. Fix $p\in C$ such that $\pi(p)=q$, where $q$ is a Weierstrass point of $B$ and $\pi$ is totally ramified at $p$. 

\end{si}

According to Proposition \ref{prop:push}, a given integer $m$ belongs to ${\rm S}(C,p)$ if and only if
\begin{equation}\label{eqn:in_H(p)}
\sum_{i=0}^{N-1}h^0\bigg(\bigg(L^{(i)}\bigg)^{-1}\bigg(\bigg\lfloor\frac{m}{N}+\bigg\lag \frac{ia}{N}\bigg\rag\bigg\rfloor q\bigg)\bigg)\neq   \sum_{i=0}^{N-1}h^0\bigg(\bigg(L^{(i)}\bigg)^{-1}\bigg(\bigg\lfloor\frac{m-1}{N}+\bigg\lag \frac{ia}{N}\bigg\rag\bigg\rfloor q\bigg)\bigg). 
\end{equation}
The inequality holds if and only if there exists at least one $i$ such that $s(m,i):=\frac{m}{N}+\lag \frac{ia}{N}\rag$ is an integer and $h^0((L^{(i)})^{-1}(s(m,i)q))\neq h^0((L^{(i)})^{-1}((s(m,i)-1)q))$. 

\medskip
By definition, $s(m,i)\in\ZZ$ if and only if $m+ia\equiv 0(\text{mod }N)$. Since $\pi$ is totally ramified at $p$, by Theorem \ref{thm:foundation} we have $\gcd(a,N)=1$, and hence there is \textit{precisely} one $i$ such that $s(m,i)$ is an integer. In this case, we shall write $s_m$ for $s(m,i)$.

\begin{defn}\label{defn:Weierstrass_set}
Given any line bundle $L$ on $C$, we set
\[{\rm S}_L(q):=\{n\in \ZZ\mid h^0(L(nq))\neq h^0(L((n-1)q)) \}\]
and refer to it the \textit{Weierstrass set}
at $q$ of $L$.
\end{defn}
\medskip
Given a $2N$-semigroup ${\rm S}$ with standard basis $(2N,e_1\hh e_{2N-1})$, our goal is to decide whether ${\rm S}={\rm S}(C,p)$, for some pair $(C,p)$. To do so, we will adapt the inequality \eqref{eqn:in_H(p)} to the case of $m=e_i$. It is worth noting that specifying a standard basis $(2N,e_1\hh e_{2N-1})$ is equivalent to fixing a {\it gap vector} $(0,v_1\hh v_{2N-1})$, in which $v_i$ denotes the number of elements in $\mb{N} \setminus {\rm S}$ with $2N$-residue equal to $i$, for every $i=1\hh 2N-1$. Indeed, since $e_i$ is a minimal representative of the $i$-th residue class, we have $v_i= \lfloor \frac{e_i}{2N} \rfloor$. 

\medskip 
The upshot of the preceding discussion is the following criterion for the Weierstrass-realizability of $2N$-semigroups:
\begin{lem}\label{lem:tautological}
Let ${\rm S}$ be a $2N$-semigroup with standard basis $(2N,e_1\hh e_{2N-1})$. Then ${\rm S}$ is Weierstrass-realizable by a pointed cyclic cover $\pi:(C,p)\to (B,q)$ as in  Situation \ref{si:cyclic_hyperell} if and only if there exists an invertible sheaf $L$ on $B$ and an effective divisor $D=aq+\sum a_kq_k$ for which $L^{\ot N}\cong\mO_B(D)$, $\gcd(a,N)=1$ and
\[h^0((L^{(i)})^{-1}(s_{\ov{e_j}}q))=1 \text{ and } h^0((L^{(i)})^{-1}((s_{\ov{e_j}}-1)q))=0\]
for $j=1,\dots,N-1$, where $\ov{e_j}=\min\{e_j,e_{j+N}\}$, $\ov{e_j}+ia\equiv 0$(mod $N$) and $L^{(i)}=L^i(-[\frac{iD}{N}])$; moreover, $\max\{e_j,e_{j+N}\}=\min\{n\in {\rm S}_{(L^{(i)})^{-1}}(q)\mid n-s_{\ov{e_j}}\equiv 1 (\text{mod }2)\}$.
\end{lem}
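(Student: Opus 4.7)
The plan is to translate Weierstrass-realizability of ${\rm S}$ into cohomological conditions on the eigensheaves appearing in Lemma~\ref{lem:push}, and then organize the resulting information first by residue class modulo $N$ and then by parity in order to match it against the data $(\bar{e_j},\max\{e_j,e_{j+N}\})$.

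In the forward direction, I will start from a cover $\pi:(C,p)\to(B,q)$ as in Situation~\ref{si:cyclic_hyperell}. By Theorem~\ref{thm:foundation} such a $\pi$ arises from covering data $(L,D=aq+\sum a_k q_k)$ with $L^{\ot N}\cong\mO_B(D)$, and total ramification of $\pi$ at $p$ forces $\gcd(a,N)=1$. Applying Lemma~\ref{lem:push} and taking global sections on $B$,
\[
h^0(C,\mO_C(mp)) \;=\; \sum_{i=0}^{N-1} h^0\bigl(B,(L^{(i)})^{-1}(\lfloor m/N+\langle ia/N\rangle\rfloor q)\bigr).
\]
The only summand in which the floor $\lfloor m/N+\langle ia/N\rangle\rfloor$ can increase when $m-1$ is replaced by $m$ is the unique one for which $m+ia\equiv 0\pmod N$; for that index $i$ one has $\lfloor m/N+\langle ia/N\rangle\rfloor=s_m$, so that $m\in{\rm S}(C,p)$ if and only if $s_m\in{\rm S}_{(L^{(i)})^{-1}}(q)$.

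Next I will sort this information by residue. Fix $j\in\{1,\dots,N-1\}$ and let $i$ be the unique index with $j+ia\equiv 0\pmod N$. For $m=j+Nt$ a direct computation gives $\langle ia/N\rangle=(N-j)/N$, whence $s_m=t+1$. Consequently the $2N$-residue of $m$ equals $j$ exactly when $t$ is even, equivalently when $s_m$ is odd, and equals $N+j$ exactly when $s_m$ is even. Thus $e_j$ is the preimage (under $m\mapsto s_m$) of the smallest odd element of ${\rm S}_{(L^{(i)})^{-1}}(q)$ and $e_{j+N}$ of the smallest even element; $\bar{e_j}=\min\{e_j,e_{j+N}\}$ is the preimage of $\min{\rm S}_{(L^{(i)})^{-1}}(q)$, and $\max\{e_j,e_{j+N}\}$ of the smallest element of ${\rm S}_{(L^{(i)})^{-1}}(q)$ of the opposite parity.

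Finally I will repackage these observations into the cohomological form stated in the lemma. That $s_{\bar{e_j}}$ is the smallest element of ${\rm S}_{(L^{(i)})^{-1}}(q)$ is exactly the pair of identities $h^0((L^{(i)})^{-1}(s_{\bar{e_j}}q))=1$ and $h^0((L^{(i)})^{-1}((s_{\bar{e_j}}-1)q))=0$, and the characterization of $\max\{e_j,e_{j+N}\}$ by opposite parity then reads off directly from the definition of ${\rm S}_{(L^{(i)})^{-1}}(q)$; the multiplicity $2N\in{\rm S}$ is automatic from total ramification of $\pi$ at $p$. For the converse, given covering data $(L,D)$ satisfying the listed cohomological conditions, Lemma~\ref{lem:normalization} manufactures a cover as in Situation~\ref{si:cyclic_hyperell}, and the bijection above run in reverse forces ${\rm S}(C,p)={\rm S}$. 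The step requiring the most care is the parity bookkeeping that identifies $2N$-residues in ${\rm S}(C,p)$ with parities of $s_m$ inside a single $N$-residue class; once it is isolated the rest of the argument is a mechanical application of Lemma~\ref{lem:push}.
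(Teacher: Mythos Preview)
Your proposal is correct and follows essentially the same approach as the paper: you use Lemma~\ref{lem:push} to reduce membership in ${\rm S}(C,p)$ to membership of $s_m$ in the Weierstrass set of the relevant eigensheaf, then sort by $N$-residue and use parity of $s_m$ to separate the $2N$-residues $j$ and $j+N$. The paper's own proof is a terse two-sentence version of exactly this; your explicit computation $s_{j+Nt}=t+1$ (so that $m\equiv j\pmod{2N}$ iff $s_m$ is odd) is precisely the content of the paper's remark that $s_{e_j}$ and $s_{e_{j+N}}$ are integers of different parity.
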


\begin{proof}[Proof of Lemma \ref{lem:tautological}]
We have ${\rm S}={\rm S}(C,p)$ if and only if $e_j=\min\{n\in {\rm S}(C,p)\mid n\equiv j(\text{mod }2N)\}$. This in turn is equivalent to $s_{\ov{e_j}}$ being the minimal element of ${\rm S}_{(L^{(i)})^{-1}}(q)$, which is cohomologically characterized by the given conditions. Notice also that $s_{e_j}=\frac{e_j}{N}+\lag \frac{ia}{N}\rag$ and $s_{e_{j+N}}=\frac{e_{j+N}}{N}+\lag \frac{ia}{N}\rag$ are integers of different parity, which yields the last item.  
\end{proof}
It is easy to see that if $L^{\ot N}\cong\mO_B(D)$ and $D$ is effective on $B$, then $\deg(L^{-1})<0$ and hence that ${\rm S}_{L^{-n}}(q)\subset \ZZ_{>0}$ for every $n\ge 1$.\footnote{In fact, it is also true that ${\rm S}_{(L^{(i)})^{-1}}(q)\subset \ZZ_{>0}$, as $\deg((L^{(i)})^{-1})=-i\cd\deg(L)+\deg\bigg(\mO_B\bigg(\bigg[\frac{iD}{N}\bigg]\bigg)\bigg)$, $\deg(D)=N\cd\deg(L)$, and we have assumed from the outset that $D$ is {\it not} of the form $D=N\cd D^{\pr}$, where $D^{\pr}$ is effective.}

\subsection{Weierstrass-realizability via cyclic covers with reduced branch locus}
A particularly simple class of cyclic covers is comprised of those for which the covering data $(L,D)$ includes a \textit{reduced} effective divisor $D$. Hereafter, we focus on those $2N$-semigroups that are realizable by pointed cyclic covers $\pi:(C,p)\to (B,q)$ constructed out of such covering data. More precisely, we study the Weierstrass-realizability of $2N$-semigroups via degree-$N$ cyclic covers $\pi:C\to B$ of hyperelliptic curves satisfying the following hypotheses:
\begin{si}\label{si:reduced}
\begin{enumerate}
    \item The cover $\pi$ satisfies the conditions of Situation \ref{si:cyclic_hyperell}; and   \item the underlying (branch) divisor $D$ is reduced (and hence $L^{(i)}=L^i$).
\end{enumerate}
\end{si}
In this case, the cover $\pi$ is totally ramified at every point in the ramification locus. Let $g$ (resp., $\ga$) denote the genus of the source (resp., target curve); we have $g=1+N(\gamma-1)+\frac{1}{2}\deg(D)(N-1)$ by the Riemann--Hurwitz formula. {\it Hereafter, we always assume $g\ge (2N-1)\gamma$}. 

\begin{rem}\label{rem:no_redundancy}
A $2N$-semigroup ${\rm S}$ realized by a pointed cyclic cover $\pi: C \ra B$ as in Situation~\ref{si:reduced} with target $B$ of genus $\ga$ is \emph{$(N,\ga)$-hyperelliptic} in the sense of \cite{CarvalhoTorres}; that is, $\mb{N} \setminus {\rm S}$ contains precisely $\ga$ multiples of $N$, all of which are strictly less than $2N\ga$. In particular, semigroups ${\rm S}_1$ and ${\rm S}_2$ associated with distinct pairs $(N_i,\ga_i)$, $i=1,2$ are always distinct.
\end{rem}

\begin{nt}
Given $n \geq 1$, let $a_n(L,q)$ denote the minimal element of ${\rm S}_{L^{-n}}(q)$ strictly greater than zero. 
\end{nt}

\begin{thm}\label{thm:tautological_2}
Let $N$ be a positive integer. Those $2N$-semigroups that arise as  Weierstrass semigroups ${\rm S}= {\rm S}(C,p)$ of totally ramified points $p \in C$ of pointed cyclic covers $\pi:(C=\Sp(\bigoplus_{i=0}^{N-1}(L^{(i)})^{-1}),p)\to (B,q)$ with reduced branch divisors as in Situation \ref{si:reduced} correspond bijectively to tuples of positive integers $(a_1\hh a_{N-1})$ for which there exist a line bundle $L$ and a reduced effective divisor $D$ on $B$ such that 
\begin{enumerate}
    \item $L^{\ot N}\cong \mO_B(D)$; and
    \item $h^0(L^{-j}(a_jq))=1,h^0(L^{-j}((a_j-1)q))=0$, for $j=1, \dots,N-1$.
\end{enumerate}
\end{thm}

\begin{proof}
In the notation of Lemma~\ref{lem:tautological}, we have $s_{\ov{e_j}}=\frac{\ov{e_j}-j}{N}+1$ for every $j=1,\dots,N-1$.
Accordingly, the cohomological condition in Lemma \ref{lem:tautological} simplifies to the requirement that
\begin{equation}
h^0(L^{-(N-j)}(s_{\ov{e_j}}q))=1, h^0(L^{-(N-j)}((s_{\ov{e_j}}-1)q))=0, 1\le j\le N-1.
\end{equation}
Conversely, assume we are given an invertible sheaf $L$ and a reduced effective divisor $D$ for which $L^{\ot N}\cong \mO_B(D)$, 
it is easy to see that $Na_j(L,q)-(N-j)\in {\rm S}(C,p)$ for every $j=1,\dots,N-1$. Suppose now that ${\rm S}$ is a $2N$-semigroup with standard basis $(2N,e_1\hh e_{2N-1})$ and that ${\rm S}={\rm S}(C,p)$. Because of the minimality of the integers $a_j(L,q)$ in their respective Weierstrass sets, we conclude that 
\[
Na_{N-j}(L,q)-(N-j)=\ov{e_j}=\min\{e_j,e_{j+N}\}
\]
or equivalently, that $a_{N-j}(L,q)=s_{\ov{e_j}}$. 
\end{proof}

\begin{nt}
Set $d_{g,\gamma,N}=\frac{(2g-2)-N(2\gamma-2)}{N(N-1)}$.
\end{nt}

Riemann-Hurwitz implies that the cover $\pi:(C,p)\to (B,q)$ is totally ramified over precisely $Nd_{g,\gamma,N}$ distinct points. In particular, we have $\deg(L)=d_{g,\gamma,N}$. Our additional numerical assumption $g\ge (2N-1)\gamma$ ensures that $L^{\ot N}$ is very ample, and thus by Bertini's theorem the associated complete linear series contains a reduced divisor $D$. The upshot is that in constructing cyclic covers 
that realize Weierstrass semigroups, we may focus on verifying the cohomological conditions of Theorem~\ref{thm:tautological_2}. This point is illustrated in the next example. 

\begin{ex}\label{ex:sanity}
Suppose $L=\mO_B(d_{g,\gamma,N}q)$. Note that $(L^{\ot N})(-q)$ is itself very ample; accordingly, we fix a general reduced divisor $D^{\pr}$ in $|(L^{\ot N})(-q)|$ not supported at $q$ and use $D=D^{\pr}+q\in|L^{\ot N}|$ to build a cyclic cover as in Situation \ref{si:reduced}. In so doing we realize the $2N$-semigroup with standard basis satisfying
\[e_0=2N \text{ and } \ov{e_j}=\min\{e_j,e_{N+j}\}=(Nd_{g,\gamma,N}-1)(N-j) \text{ for every } j=1\hh N-1.\]
We will see in Remark \ref{rem:e_from_a} that the numbers $\ov{e_j}$ uniquely determine the standard basis and hence the semigroup.
This semigroup was studied in \cite{CarvalhoTorres}; hereafter we will refer to it as being of {\it Carvalho–Torres type}. 
\end{ex}
Example~\ref{ex:sanity} illustrates our general strategy towards realizing a given numerical semigroup ${\rm S}$: we seek a divisor $Y$ on $(B,q)$ and a reduced effective divisor $D$ on $B$ such that $D\in |NY|$ and $(\mO_B(Y),D)$ provides a cyclic cover $\pi: (C,p)\to (B,q)$ for which ${\rm S}={\rm S}(C,p)$.
We may reformulate our realizability problem as follows: 

\begin{q}\label{q:reformulation}
Let ${\rm S}$ be a $2N$-semigroup with standard basis $(2N,e_1\hh e_{2N-1})$, and set $a_j=\frac{\min\{e_{N-j},e_{2N-j}\}+j}{N}$. 
For which tuples of positive integers $(a_1\hh a_{N-1})$ does there exist a divisor $Y$ of degree $d_{g,\gamma,N}$, not necessarily effective, such that for every $1\le j\le N-1$, we have
\[\begin{cases}
h^0(\mO_B(-(N-j)Y+a_jq))=1 \text{ and}\\
h^0(\mO_B(-(N-j)Y+(a_j-1)q))=0?
\end{cases}\]
\end{q}
Hereafter, we shall refer to the $2N$-semigroup $H$ whose standard basis is determined by $(a_1\hh a_{N-1})$ as above as \textit{the $2N$-semigroup indexed by} $(a_1\hh a_{N-1})$. The fact that $(a_1\hh a_{N-1})$ uniquely determines the standard basis is not obvious, but it is explained in Remark \ref{rem:e_from_a}.

\subsection{Restrictions on Weierstrass semigroups arising from cyclic covers}\label{sec:restrictions_on_semigroups}
Let $\mc{L}(a,b):= L^{\otimes -a}(b \cdot q)$. We begin by explicitly relating the cohomology of $\mc{O}(m \cdot p)$ on $C$ to that of $\mc{L}(a,\lfloor \frac{m+a}{N} \rfloor)$ on $B$. To this end, note first that
\[
h^0(C,m \cdot p)- h^0(C,(m-1) \cdot p)= h^0\bigg(B,\mc{L}\bigg(a,\bigg\lfloor \frac{m+a+1}{N} \bigg\rfloor\bigg)\bigg)- h^0\bigg(B,\mc{L}\bigg(a,\bigg\lfloor \frac{m+a}{N} \bigg\rfloor\bigg)\bigg)
\]
precisely when $N$ divides $(m+a+1)$. 
It follows that
\[
h^0(C,(Nb-a)p)- h^0(C,(Nb-a-1)p)=h^0(B,\mc{L}(a,b))- h^0(B,\mc{L}(a,b-1))
\]
and, consequently, that
\[
h^0(B,\mc{L}(a,b))- h^0(B,\mc{L}(a,b-1))= \delta(Nb-a \in {\rm S}(C,p))
\]
where $\delta(\cdot)$ returns 1 precisely when the parenthesized clause is true, and returns 0 otherwise. As $h^0(B,\mc{L}(a,-1))=0$, we deduce that
\begin{equation}\label{h0B_formula}
\begin{split}
h^0(B,\mc{L}(a,b))&= \sum_{i=0}^n (h^0(B,\mc{L}(a,i))- h^0(B,\mc{L}(a,i-1))) \\
&= \#\{m \in {\rm S}(C,p): m \leq Nb-a \text{ and } m \equiv -a \text{ (mod N)}\}.
\end{split}
\end{equation}
The following result shows that \eqref{h0B_formula} uniquely determines the {\it gap vector} computing the number of elements of $\mb{N} \setminus {\rm S}(C,p)$ in each $N$-residue class between 0 and $N-1$.
\begin{thm}\label{gap_vector_thm}
We have
\[
\#\{\text{gaps } \equiv -a \text{ (mod }N)\}= \ga-1 + \delta(a=0)+ ad_{g,\gamma,N}
\]
for every $a=0,1,\dots,N-1$.
\end{thm}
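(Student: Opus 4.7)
The plan is to apply the identity \eqref{h0B_formula} for a sufficiently large integer $b$ and extract the total number of gaps in each residue class by comparing the cohomology on $B$, computed via Riemann--Roch, with the naive count of all integers in that residue class.

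First I would fix a residue $-a$ modulo $N$ (with $0 \leq a \leq N-1$) and count the integers of $[1, Nb-a]$ in this class. For $a>0$ these are $N-a, 2N-a, \dots, Nb-a$, giving exactly $b$ integers; for $a=0$ the positive multiples of $N$ up to $Nb$ are $N, 2N, \dots, Nb$, again $b$ integers. The difference between this count and the number of \emph{positive} semigroup elements in the same range and residue class is exactly the number of gaps of $S(C,p)$ lying in $[1,Nb-a]$ and congruent to $-a$ modulo $N$. Taking $b$ large enough that $S(C,p)$ has stabilized in each residue class (which happens once $Nb-a$ exceeds the Frobenius number), this count equals the total number of gaps in that class.

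Next I would compare with \eqref{h0B_formula}. That identity counts all semigroup elements $m \leq Nb-a$ with $m \equiv -a \pmod{N}$, which includes $m=0$ precisely when $a=0$; thus the number of \emph{positive} such elements is $h^0(B, \mc{L}(a,b)) - \delta(a=0)$. Combining, the gap count in residue class $-a$ is
\[
b - h^0(B, \mc{L}(a,b)) + \delta(a=0).
\]

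Finally I would evaluate $h^0(B, \mc{L}(a,b))$ by Riemann--Roch on $B$. Since $\deg \mc{L}(a,b) = -a\, d_{g,\gamma,N} + b$, taking $b \geq a\, d_{g,\gamma,N} + 2\gamma - 1$ makes the degree at least $2\gamma - 1$, so $h^1(B, \mc{L}(a,b)) = 0$ and Riemann--Roch yields
\[
h^0(B, \mc{L}(a,b)) = b - a\, d_{g,\gamma,N} - \gamma + 1.
\]
Substituting this into the displayed equation above gives
\[
\#\{\text{gaps} \equiv -a \pmod{N}\} = \gamma - 1 + \delta(a=0) + a\, d_{g,\gamma,N},
\]
as claimed.

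The computation is essentially routine once one has \eqref{h0B_formula}; the only genuine subtlety is bookkeeping, namely the role of $m=0$ in the $a=0$ case, which is precisely the source of the $\delta(a=0)$ correction. Beyond this, one should verify that $b$ may indeed be chosen large enough that \emph{all} gaps in the given residue class are captured by the inequality $m \leq Nb-a$; this is immediate from the cofiniteness of $S(C,p)$ in $\mb{N}$, but it is the one step where the hypothesis that the gap set is finite is used.
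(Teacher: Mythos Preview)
Your proof is correct and follows essentially the same approach as the paper: both compute $h^0(B,\mc{L}(a,b))$ for large $b$ via Riemann--Roch and compare it to the count of semigroup elements from \eqref{h0B_formula}. The only cosmetic difference is bookkeeping for $m=0$: you subtract $\delta(a=0)$ from the semigroup count, while the paper absorbs it into the integer count via $\lfloor -a/N\rfloor$.
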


\begin{proof}
Riemann--Roch implies that 
\begin{equation}\label{RR}
h^0(B,\mc{L}(a,b))= b+ 1- \gamma- a d_{g,\gamma,N}.
\end{equation}
whenever $b \gg 0$. On the other hand, \eqref{h0B_formula} implies that
{\small
\[
    \begin{split}
        h^0(B, \mc{L}(a,b))&= \#\{m \leq Nb-a: m \geq 0 \text{ and } m \equiv -a \text{ (mod N)}\}- \#\{\text{gaps }\equiv -a \text{ (mod N)}\} \\
        &= \bigg\lfloor \frac{Nb-a}{N} \bigg\rfloor+1- (\#\text{gaps }\equiv -a \text{ (mod N)} \\
        &= b+ \bigg\lfloor \frac{-a}{N} \bigg\rfloor+1- (\#\text{gaps }\equiv -a \text{ (mod N)}.
    \end{split}
    \]
}
Comparing the latter equality with \eqref{RR} yields the desired result.
\end{proof}

Additional restrictions on $(a_1,\dots,a_{N-1})$ arise from the additive structure of ${\rm S}$; indeed, according to \cite[(1.2)]{morrison1986galois}, for every $i,j \in \{0,1,\dots,2N-1\}$ we have
\begin{equation}\label{MP_inequalities}
v_i+v_j \geq v_{i+j} \text{ if } i+j<2N, \text{ while } 1+v_i+v_j \geq v_{i+j-2N} \text { if } i+j >2N
\end{equation}
where $v_i$ denotes the cardinality of the set of gaps $\{\ell \in \mb{N} \setminus {\rm S}: \ell \equiv i \text{ (mod 2N)}\}$ with $2N$-residues equal to $i$. Here $v_i=\lfloor \frac{e_i}{2N} \rfloor$. We will refer to the inequalities \eqref{MP_inequalities} as the {\it Morrison--Pinkham inequalities}; they characterize those vectors $(e_0,\dots,e_{2N-1})$ that arise as standard bases of numerical semigroups (and in particular, are independent of conditions arising from the construction of cyclic covers). A useful reformulation of \eqref{MP_inequalities} is the requirement that $(e_0,\dots,e_{2N-1})$ satisfy
\begin{equation}\label{MP_inequalities_alt}
e_j+e_k \geq e_{(j+k) \text{ mod 2N}}
\end{equation}
for every $j,k \in \{0,1,\dots,2N-1\}$.
On the other hand, Theorem~\ref{gap_vector_thm} implies that standard bases $(e_0,\dots,e_{2N-1})$ of $2N$-semigroups that arise from our cyclic covers built out of reduced branch divisors as in Situation \ref{si:reduced} satisfy
\begin{equation}\label{cyclic_cover_standard_basis_restriction}
e_{N-i}+ e_{2N-i}= 2iNd_{g,\ga,N}+ (2\ga+1)N-2i
\end{equation}
for every $i=1,\dots,N-1$. Applying the Morrison--Pinkham inequalities \eqref{MP_inequalities_alt} with $j=N, k=N-i$ in tandem with the cyclic cover restriction \eqref{cyclic_cover_standard_basis_restriction}, we deduce that
\[
iNd_{g,\ga,N}-i \leq e_{N-i}, e_{2N-i} \leq iNd_{g,\ga,N}+ (2\ga+1)N- i
\]
and consequently that
\[
iNd_{g,\ga,N} \leq \min(e_{N-i}, e_{2N-i})+i \leq iNd_{g,\ga,N}+ \ga
\]
for every $i=1,\dots,N-1$. In other words, $a_i \in [id_{g,\ga,N},id_{g,\ga,N}+\ga]$ for every $i$.

\begin{rem} \label{rem:e_from_a}
    Equation \eqref{cyclic_cover_standard_basis_restriction}
    shows that the standard basis $(e_0, \cdots, e_{2N-1})$ is determined completely by $g,\gamma,N$ and only $N-1$ numbers $\min \{ e_{N-j}, e_{2N-j} \}$, for $j=1,\cdots,N$. Indeed, knowing the minimum and the sum determines the \emph{set} $\{e_{N-j}, e_{2N-j} \}$, and the residues modulo $2N$ determine which is which, via
    \[ 
    \min \{e_{N-j}, e_{2N-j} \} = \begin{cases}
        e_{N-j} & \mathrm{if}\, a_j\, \mbox{is odd,}\\
        e_{2N-j} & \mathrm{if}\, a_j\, \mbox{ is even.}
    \end{cases}
    \]
    Here 
    $a_j = \frac{ \min\{a_{N-j},a_{2N-j} \} + j}{N}$ as in
    Question \ref{q:reformulation}.
    This is why the numbers $a_j$ defined uniquely determine the standard basis, and therefore the phrase ``the $2N$-semigroup indexed by $(a_1, \cdots, a_{N-1})$'' is well-defined.
\end{rem}

\subsection{Weierstrass-realizability for feasible $2N$-semigroups}\label{sec:weierstrass-realizability_for_feasibles}
\begin{defn}\label{defn:feasible}
For $N\ge 2$, we set $\ti{F}(N)=\Big(\prod_{j=1}^{N-1}[jd_{g,\gamma,N},jd_{g,\gamma,N}+\gamma]\Big)\cap\ZZ^{N-1}$. Similarly we set
\[F(N)=\{(a_1\hh a_{N-1})\in \ti{F}(N)\mid a_{i+j}\le a_{i}+a_{j} \text{ for every } 1\le i,j,i+j\le N-1\}.\]
We refer to $F(N)$ as the $N$-th {\it feasible set}.
\end{defn}

\begin{ex}\label{ex:N2}
Say $N=2$. The feasible set $F(2)$ is an interval of length $\ga$, and each of the $\ga+1$ possible values $a_1=d_{g,\gamma,2}+j$, $j=0,\dots,\ga$ indexes a $4$-semigroup that is realizable as the Weierstrass semigroup of a (totally ramified point of a) double cover of a hyperelliptic curve. Indeed, according to Corollary~\ref{cor:reduced}, the existence of a line bundle $L$ that satisfies the cohomological conditions of Theorem~\ref{thm:tautological_2} is a straightforward consequence of the existence of $q$-reduced divisors in every divisor class of any given pointed hyperelliptic curve $(B,q)$, which in turn is the content of Lemma~\ref{lem:mumford}.
\end{ex}

\begin{ex}\label{ex:N3} Say $N=3$. This time, we have
\begin{equation}\label{eqn:feasible_set}
F(3):=\{(a_1,a_2)\in ([d_{g,\ga,3},d_{g,\ga,3}+\gamma]\times [2d_{g,\ga,3},2d_{g,\ga,3}+\gamma])\cap\ZZ^2\mid a_2\le 2a_1\}
\end{equation}
and
for each $j=1,2$, the cohomological conditions \[h^0(\mO_B(-jY+a_jq))=1,
h^0(\mO_B(-jY+(a_j-1)q))=0\] of Theorem~\ref{thm:tautological_2}
determine an (at-most) unique effective divisor $E_j\sim a_jq-jY$ on $B$, which may or may not exist. Note that $\deg(E_j)=a_j-j\cd\deg(Y)=a_j-jd_{g,\ga,3}$. 
\end{ex}
With the two preceding examples in mind, we reformulate our realizability criterion once again:

\begin{prop}\label{prop:reduced_pair}
The $2N$-semigroup indexed by $(a_1\hh a_{N-1})$ is Weierstrass-realizable via a cyclic cover as in Situation \ref{si:reduced} if and only if there exists an $(N-1)$-tuple of effective divisors $(E_1\hh E_{N-1})\in \prod_{j=1}^{N-1}\sy^{a_j-jd_{g,\gamma,N}}B$ for which
\begin{enumerate}
    \item $E_j-(a_j-jd_{g,\gamma,N})q$ is $q$-reduced in the sense of Definition~\ref{defn:reduced_div}; and
    \item $j(E_{1}-a_{1}q)\sim E_{j}-a_{j}q$
\end{enumerate}
for every $j=1,\dots, N-1$.
\end{prop}
\begin{proof}
Given $E_1\hh E_{N-1}$ satisfying (1) and (2), let $L=\mO_B(-E_{1}+a_{1}q)$; it then follows from (3) that $L^{j}\cong \mO_B(a_{j}q-E_{j})$. It also follows from (1) and Corollary \ref{cor:reduced} that 
\[h^0(L^{-j}(a_{j}q))=h^0(\mO_B(E_{j}))=1 \text{ and }h^0(L^{-j}((a_{j}-1)q))=h^0(\mO_B(E_{j}-q))=0.\]
Recall that we assume $g$ is large with respect to $\gamma$, so that a reduced representative $D$ in $|L^{\ot N}|$ exists. Applying Theorem~\ref{thm:tautological_2}, we conclude that the $2N$-semigroup indexed by $(a_1,\dots,a_{N-1})$ is Weierstrass-realizable.

\medskip
Conversely, suppose ${\rm S}$ is Weierstrass-realizable via a cyclic cover as in Situation \ref{si:reduced}. According to Theorem~\ref{thm:tautological_2}, we have $h^0(L^{-j}(a_{j}q))=1$ and $h^0(L^{-j}((a_{j}-1)q))=0$, for $j=1\hh N-1$. This means that there exist effective divisors $E_1\hh E_{N-1}$ for which $\mO_B(E_j)\cong L^{-j}(a_jq)$ for every $j$. In particular, 
$\deg(E_j)=a_j-jd_{g,\gamma,N}\le \gamma$. Consequently $E_j-(a_j-jd_{g,\gamma,N})q$ fails to be reduced if and only if one of the following two situations occurs:
\begin{enumerate}
    \item $E_j$ is supported at $q$; or else
    \item $E_j$ is supported at $p$ and $\iota(p)$, where $\iota$ is the hyperelliptic involution on $B$.
\end{enumerate}
The fact that $h^0(L^{-j}((a_{j}-1)q))=h^0(\mO_B(E_j)(-q))=0$ rules out the first case; while the fact that $h^0(L^{-j}(a_{j}q))=h^0(\mO_B(E_j))=1$ rules out the second. Finally, since $\mO_B(E_j)\cong L^{-j}(a_jq)$ for every $j$, item (2) in the statement of the lemma clearly holds. This concludes the proof.  
\end{proof}
Proposition~\ref{prop:reduced_pair} shows that the problem of constructing pointed cyclic covers of a hyperelliptic curve with prescribed Weierstrass semigroup is fundamentally about the behavior of multiplication maps on a hyperelliptic Jacobian.
\begin{nt}
Let $u_d:B^d\to\Jac(B):\sum p_i\mapsto [\sum p_i-dq]$ be the $d$-th Abel-Jacobi map. Let $\Theta_d$ denote the image of $u_d$, and set $\Theta^0_d=\Theta_d\backslash\Theta_{d-1}$.
\end{nt}
\begin{nt}
Let $m_j:\Jac(B)\to \Jac(B)$ denote the multiplication-by-$j$ map. 
\end{nt}
Proposition~\ref{prop:reduced_pair} establishes that the $2N$-semigroup indexed by $(a_1\hh a_{N-1})$ is realizable via a cover as in Situation \ref{si:reduced} if and only if $\bigcap_{j=1}^{N-1}m_j^{-1}(\Theta^0_{a_j-jd_{g,\gamma,N}})$ is non-empty. 
\begin{rem}
It is worth noting that more refined information is known about set-theoretic intersections of the form $(m_2^{-1}\Theta_d)\cap \Theta_{d^{\pr}}$. For example, when $\gamma>1$, $m_2^{-1}\Theta_1\cap \Theta_1$ consists of the $u_1$-images of all Weierstrass points in $\Jac(B)$, and $m_2^{-1}\Theta_1\cap \Theta_{\gamma-1}\subset\Jac(B)[2]$ (see \cite[Thm 2.5]{zarhin2019division}).
\end{rem}

For the sake of simplicity, hereafter we set $\ep_j=a_j-jd_{g,\ga,N}$ and use the vector $(\ep_1\hh\ep_{N-1})$ to indicate the semigroup previously indexed by $(a_1\hh a_{N-1})$. For future reference, we make a general definition: 
\begin{defn}\label{defn:ep_vec}
Let ${\rm S}$ be an $(N,\ga)$-hyperelliptic semigroup of genus $g\ge(2N-1)\ga$ with standard basis $\{2N,e_1\hh e_{2N-1}\}$, and suppose $d=\frac{(2g-2)-N(2\ga-2)}{N(N-1)}$ is an integer. 
For every $j=1,\dots,N-1$, let $\ep_j=\frac{\min\{e_{N-j},e_{2N-j}\}+j}{N}-j\cd d$; we call $(\ep_j)_{j=1}^{N-1}$ the \emph{$\ep$-vector} of ${\rm S}$.
\end{defn}

Remark \ref{rem:e_from_a} explains why the $\ep$-vector uniquely determines the semigroup, and how to compute the standard basis from the $\ep$-vector.

In these new coordinates, the feasible set $F(N)$ becomes
\begin{equation}\label{eqn:feasible_ep}
    F(N)=\{(\ep_1\hh \ep_{N-1})\in [\ga]^{N-1}\mid \ep_{i+j}\le \ep_i+\ep_j \text{ for every } 1\le i,j,i+j\le N-1\}
\end{equation}
where $[\ga]=\{0,1,\dots,\ga\}$.

\begin{prop}\label{prop:basic}
Suppose the $2N$-semigroup indexed by $(\ep_1\hh \ep_{N-1})$ is realizable via a cyclic cover as in Situation \ref{si:reduced}. Then the following hold:
\begin{enumerate}
    \item The set $\{j\mid \ep_j=0\}$ is either empty or consists of all multiples of a given integer $n$ in $\{1,\hh N-1\}$ and $\ep_{j+n}=\ep_j$ for every $j$. 
    \item When $\gamma>1$ and $\ep_1=1$, we have $\ep_2,\ep_3\neq 1$ unless $(\ep_1,\ep_2,\ep_3)=(1,0,1)$.
    \item When $\gamma>1$ and $\ep_1=1$, we have $\ep_j\neq 0$ for every $j\le 2\ga$ unless $j$ is even and $\ep_2=0$.  
    \item The semigroup indexed by $(\ga\hh\ga)$ is realizable.
    \item If $\ep_i=0$, then $\ep_{j}=\ep_{i-j}$ for every index $j$ strictly less than $i$. 
\end{enumerate}
\end{prop}
\begin{proof}
\begin{enumerate}
    \item Without loss of generality, suppose that the index set $\{j\mid \ep_j=0\}$ is nonempty; let $n$ be the smallest such index. Then $E_1-\ep_1q$ defines an order-$n$ element of $\Jac(B)$, and the conclusion follows from the requirement that $j(E_1-\ep_1q)\sim E_j-\ep_jq$. 
    \item The fact that $m_2^{-1}\Theta_1\cap \Theta_1$ consists exclusively of Weierstrass points implies that $\ep_2 \neq 1$; on the other hand, we have $H^0(\mO(2q+p))=H^0(\mO(2q))$ for every $p \in B$.
    \item Follows from the fact that $B$ has no points of order $j \in \{3\hh 2\ga\}$ (\cite[Cor. 2.8]{zarhin2019division}).
    \item Immediate, as the preimages $m_j^{-1}(\Theta^0_{\gamma})$ are non-empty open subsets of $\Jac(B)$.  
    \item Let $x=E_1-\ep_1 q$. If $\ep_i=0$, then $i\cd x=0$, which in turn implies $j\cd x=-(i-j)\cd x$. The item follows from the fact that every $\Theta_j$ is invariant under multiplication by $(-1)$: the inverse of $\sum_{k=1}^j p_k-jq$ is $\sum_{k=1}^j \iota(p_k)-jq$. 
\end{enumerate}
\end{proof}

\begin{ex}\label{sec:realizability_small_target_genera}
Suppose $\ga=1$. In this case, the coordinates $\ep_i$ of every candidate vector $(\ep_1\hh\ep_{N-1})$ satisfy $\ep_{i}+\ep_j\ge \ep_{i+j}$ for every pair of indices $i,j \leq \frac{N-1}{2}$. Furthermore, we have $m_j^{-1}(\Theta^0_1)=B\backslash B[j]$ and $m_j^{-1}(\Theta_0)=B[j]$. Candidate $\ep$-vectors that are realizable by cyclic covers as in Situation~\ref{si:reduced} are as follows:
\begin{enumerate}
    \item $(\ep_1\hh\ep_{N-1})=\mathbf{0}$. The trivial $\ep$-vector indexes a Carvalho–Torres semigroup, which is realizable; see Example~\ref{ex:sanity}.
    \item For some fixed $i$, we have $\ep_{ki}=0$ for every $k=1,\dots, \lfloor \frac{N-1}{i} \rfloor$; and $\ep_j=1$ for all other indices $j$. To realize the corresponding semigroup by a cyclic cover as in Situation~\ref{si:reduced}, choose $E_1=p-q$, where $p \in B$ is any point (with Abel--Jacobi image) of order $i$.  
    \item $(\ep_1\hh\ep_{N-1})=\mathbf{1}$. Realization follows trivially for topological reasons in this subcase; see Proposition~\ref{prop:basic}(4).
\end{enumerate}
\end{ex}

\subsection{Realizability in small covering degree}\label{sec:realizability_small_covering_degree}
\begin{ex}\label{ex:even_gen}
Whenever $1\le k\le\frac{\gamma}{N-1}$, $\sum_{i=1}^k p_i-kq$ is reduced and no $p_i$ is a Weierstrass point on $B$, the first $(N-1)$ multiples $j(\sum_{i=1}^k p_i-kq)$, $1\le j\le N-1$ are also reduced. 

\medskip
Accordingly, suppose that $1\le k\le\frac{\gamma}{N-1}$, $\sum_{i=1}^k p_i-kq$ is reduced and no $p_i$ is a Weierstrass point, and set $L=\mO_B((d_{g,\gamma,N}+k)q-\sum_{i=1}^k p_i)$. Corollary \ref{cor:reduced} now implies that
\[h^0(L^{-j}((jd_{g,\gamma,N}+jk)q))=1,h^0(L^{-j}((jd_{g,\gamma,N}+jk-1)q))=0\]
for every $j=1,\dots,N-1$. It follows that the $2N$-semigroup with associated $\ep$-vector $(k,2k\hh(N-1)k)$ is realizable, for every $k=1,\dots, \frac{\gamma}{N-1}$.
\end{ex}

\subsection*{N=2} As explained in Example \ref{ex:N2}, the realizability problem is trivial in this case and every $4$-semigroup is realizable. 

\subsection*{N=3}In this case, we obtain the following result:
\begin{thm}\label{thm:realizability}
The 6-semigroup with associated $\ep$-vector $(\ep_1,\ep_2)$ is Weierstrass-realizable by a construction described as in Situation~\ref{si:reduced}
if and only if either 
\begin{enumerate}
    \item $(\ep_1,\ep_2)\in F(3)$ and $\ep_2$ is even; or
    \item $(\ep_1,\ep_2)\in F(3)$, $2\gamma-2\ep_1+1\le \ep_2\le 2\ep_1$, and $\ep_2$ is odd.
\end{enumerate}
\end{thm}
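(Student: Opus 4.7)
The plan is to use Lemma~\ref{lem:reduced_pair}, which reduces realizability to the existence of effective $q$-reduced divisors $E_1,E_2$ of degrees $\ep_j:=a_j-jd_{g,\ga,3}$ on some pointed hyperelliptic curve $(B,q)$ satisfying $2(E_1-\ep_1q)\sim E_2-\ep_2q$. Under this translation the feasibility $(a_1,a_2)\in F(3)$ amounts to $\ep_1,\ep_2\in[0,\ga]$ and $\ep_2\le 2\ep_1$, while the extra inequality in case (2) is equivalent to $\ep_2$ being odd with $2\ep_1+\ep_2\ge 2\ga+1$. I would then treat necessity and sufficiency separately, using parity-tracking in Cantor's algorithm on the one hand and an explicit polynomial identity on the other.

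For necessity, I would decompose $E_1=P_1+\cdots+P_a+p_1+\cdots+p_b$ with the $P_i$ Weierstrass points distinct from $q$ and the $p_j$ non-Weierstrass, so that $a+b=\ep_1$. Since $2P_i\sim 2q$, the $q$-reduced representative of $2(E_1-\ep_1q)$ coincides with that of $2(p_1+\cdots+p_b)-2bq$, whose initial Mumford pair $(u_0,v_0)$ satisfies $\deg u_0=2b$. Setting $\al_i=\deg u_i$, the Cantor recursion $\al_{i+1}=\max(2\ga+1,2\deg v_i)-\al_i$ shows that $\al_i$ strictly decreases and that its parity flips precisely when $\deg v_i\le\ga$. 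Assuming $\ep_2=\al_T$ is odd, I would argue that the last Cantor step must itself be a parity-flip step: any earlier flip $i_0<T-1$ forces $\al_{i_0+1}=(2\ga+1)-\al_{i_0}>\ga$ (to avoid premature termination), hence $\al_{i_0}\le\ga$, contradicting $\al_{i_0}>\ga$. Consequently $\ep_2+\al_{T-1}=2\ga+1$, and $\al_{T-1}\le\al_0=2b$ yields $2\ep_1+\ep_2\ge 2b+\ep_2\ge 2\ga+1$.

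For sufficiency in the even case $\ep_2=2c$, I would take $E_1$ consisting of $\ep_1-c$ Weierstrass points distinct from $q$ together with $c$ generic non-Weierstrass points $p_1,\dots,p_c$; then $2(E_1-\ep_1q)\sim 2(p_1+\cdots+p_c)-2cq$ has effective degree $2c\le\ga$ and is already $q$-reduced. In the odd case with $2\ep_1+\ep_2\ge 2\ga+1$, set $c=(2\ga+1-\ep_2)/2$, which satisfies $c\le\min(\ep_1,\ga)$. I would then engineer a separable monic polynomial $f(x)$ of degree $2\ga+1$ via the identity $f=v^2+h^2s$, where $\deg v\le\ga$, where $h$ and $s$ are monic separable of respective degrees $c$ and $\ep_2$, and where the three are pairwise coprime; generic choices render $f$ separable. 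Taking $B:y^2=f$, letting $p_j=(x_j,v(x_j))$ for the roots $x_j$ of $h$, and forming $E_1$ by adjoining $\ep_1-c$ further Weierstrass points to $p_1+\cdots+p_c$, the Mumford pair for $2(p_1+\cdots+p_c)-2cq$ is exactly $(h^2,v)$ (since $\deg v\le 2c-1$), and a single Cantor step produces $u_1=(f-v^2)/h^2=s$ of degree $\ep_2\le\ga$, so the reduced representative has effective part of the required degree.

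The principal obstacle I anticipate is verifying that generic $(v,h,s)$ produce a separable $f=v^2+h^2s$ of degree $2\ga+1$, for which I would appeal to the separability results collected in Appendix~\ref{sec:separability}. The remaining ingredients, especially the ``flip at last step'' claim in necessity, rest on straightforward bookkeeping with the strict monotonicity and parity rules of Cantor's algorithm once these have been set up.
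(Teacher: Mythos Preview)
Your argument is correct, but it diverges from the paper's in both halves of the odd case, so a brief comparison is in order.

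For necessity when $\ep_2$ is odd, the paper bypasses Cantor's algorithm entirely: from $2(E_1-\ep_1 q)\sim E_2-\ep_2 q$ and $E_2+\iota(E_2)\sim 2\ep_2 q$ one gets $2E_1+\iota(E_2)\sim(2\ep_1+\ep_2)q$, and since neither $E_1$ nor $\iota(E_2)$ is supported at $q$ this exhibits a rational function with pole order exactly $2\ep_1+\ep_2$ at $q$; as $q$ is hyperelliptic Weierstrass and $2\ep_1+\ep_2$ is odd, this forces $2\ep_1+\ep_2\ge 2\ga+1$. Your parity-tracking through the reduction steps reaches the same inequality but with more bookkeeping; the paper's route is shorter.

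For sufficiency when $\ep_2$ is odd, the difference runs the other way. The paper proceeds inductively: it first realizes $(\ep_1,\ep_2)=(\ga,1)$ by invoking Zarhin's theorem that $m_2^{-1}\Theta_1\cap\Theta_{\ga-1}\subset\Jac(B)[2]$, then climbs in $m$ by adding a generic point to both $E_1$ and (twice to) $E_2$, and finally descends in $\ep_1$ by removing non-Weierstrass points from $E_1$ and compensating with their conjugates in $E_2$. Your construction via the identity $f=v^2+h^2 s$ with $\deg h=(2\ga+1-\ep_2)/2$ is a genuinely different and more direct approach: it produces the required pair in one stroke, and it does not need Zarhin's input. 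One small point: the separability of $f=v^2+h^2 s$ for generic data is not literally one of the statements in Appendix~\ref{sec:separability}, but it follows immediately by specializing $v$ to a generic constant $t$, since then $f'=h(2h's+hs')$ and $f$ is nonzero at all roots of $f'$ for all but finitely many $t$.
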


\noindent We split the argument into two cases, depending upon the parity of $a_2$.

\subsection*{Case one: $\ep_2$ is even}
\begin{lem}\label{lem:even}
The 6-semigroups indexed by $(\ep_1=k,\ep_2=2m)\in F(3)$ are all Weierstrass-realizable.
\end{lem}
\begin{proof}
Note that the feasible range of $m$ is $0\le m\le\frac{\gamma}{2}$ and the corresponding feasible range of $k$, given $m$, is $m\le k\le \gamma$.  

\medskip
Now set $Z=\sum_{i=1}^mp_i$ be an effective divisor as in Example~\ref{ex:even_gen} that underlies a cyclic cover realizing the 6-semigroup with $\ep$-vector $(m,2m)$; and suppose that $m<k\le \gamma$. Let $E_1=Z+\sum_{i=1}^{k-m}q_i$, where the $q_i$ are distinct Weierstrass points other than $q$. Note that $E_1-kq$ is a reduced divisor. Set $L=\mO_B((d_{g,\ga,3}+k)q-E_1)$; Corollary \ref{cor:reduced} implies that
\[
h^0(L^{-1}((d_{g,\ga,3}+k)q))=h^0(\mO_B(E_1))=1, h^0(L^{-1}((d_{g,\ga,3}+k-1)q))=0
\]
and also
\[h^0(L^{-2}((2d_{g,\ga,3}+2m)q))=h^0(\mO_B(2Z))=1, h^0(L^{-1}((2d_{g,\ga,3}+2m-1)q))=0.\]
Our proof is now complete. 
\end{proof}

\subsection*{Case two: $\ep_2$ is odd}
When $\ep_2=2m+1$ is odd, we have further restrictions on the feasible range of $(\ep_1,\ep_2)$ beyond what is given in Equation (\ref{eqn:feasible_set}):

\begin{lem}\label{lem:odd_necessary}
Every 6-semigroup with $\ep$-vector $(\ep_1,2m+1)\in F(3)$ that is Weierstrass realizable by a cyclic cover as in Situation~\ref{si:reduced} satisfies $\ep_1\ge \gamma-m$.
\end{lem}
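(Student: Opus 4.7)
My plan is to apply Lemma~\ref{lem:reduced_pair} to translate realizability into the existence of two $q$-reduced effective divisors, then use the hyperelliptic involution to collapse the relation between them into a single linear equivalence supported only at $q$, which will fail for parity reasons in the range we want to exclude.

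Setting $\ep_1 := a_1 - d_{g,\ga,3}$, Lemma~\ref{lem:reduced_pair} applied to the hypothesis of realizability yields effective divisors $E_1$ and $E_2$ of degrees $\ep_1$ and $2m+1$ respectively, each $q$-reduced, satisfying $2(E_1 - \ep_1 q) \sim E_2 - (2m+1)q$. Since $p + \iota(p) \sim 2q$ for every $p \in B$, we obtain $\iota(E_2) + E_2 \sim (4m+2)q$. Adding this to (twice) the preceding equivalence yields the clean relation
$$2E_1 + \iota(E_2) \sim (2\ep_1 + 2m+1)\,q,$$
whose left-hand side is an effective divisor of degree $2\ep_1 + 2m+1$ whose support avoids $q$, because $E_1$ and $\iota(E_2)$ are both $q$-reduced.

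Now suppose for contradiction that $a_1 < d_{g,\ga,3} + (\gamma - m)$, equivalently $\ep_1 + m \le \gamma - 1$. Then the odd integer $2\ep_1 + 2m+1 \le 2\gamma - 1$ lies in the Weierstrass gap sequence $\{1,3,\dots,2\gamma-1\}$ at the hyperelliptic Weierstrass point $q$. Therefore $h^0(\mO_B((2\ep_1+2m+1)q)) = h^0(\mO_B((2\ep_1+2m)q))$, so every effective divisor linearly equivalent to $(2\ep_1+2m+1)q$ must contain $q$ in its support. This contradicts the description of $2E_1 + \iota(E_2)$ above, forcing $\ep_1 \ge \gamma - m$.

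The main obstacle I anticipate is a bookkeeping one rather than a conceptual one: one must verify that $q$-reducedness of $E_2$ really does transfer to $\iota(E_2)$ (which it does, since $\iota$ exchanges conjugate points and fixes $q$, so avoids $q$ iff $E_2$ does), and that the two sides of the key relation genuinely do differ as divisors (otherwise the linear-equivalence-to-$(2\ep_1+2m+1)q$ step is vacuous). Once these small checks are made, the parity-of-gaps argument at the Weierstrass point $q$ is immediate.
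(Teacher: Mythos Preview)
Your proof is correct and follows essentially the same approach as the paper: both obtain the relation $2E_1 + \iota(E_2) \sim (2\ep_1 + 2m+1)q$ from Lemma~\ref{lem:reduced_pair} via the hyperelliptic involution, then use that $q$ is a Weierstrass point to force $2\ep_1 + 2m + 1 \ge 2\gamma + 1$. Your concern in the final paragraph about the two sides possibly coinciding as divisors is unfounded, since one side is supported entirely at $q$ while the other avoids $q$, and the common degree is positive.
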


\begin{proof}
According to Proposition~\ref{prop:reduced_pair}, any cyclic cover that realizes $(\ep_1,2m+1)$ is associated with a pair of effective divisors $E_1,E_2$ for which the corresponding degree-0 divisors $E_i-\deg(E_i)\cdot q$, $i=1,2$ are reduced. Moreover, the condition $2[E_1-\ep_1q]\sim E_2-(2m+1)q$ implies that
\[[2\ep_1-(2m+1)]q\sim 2E_1-E_2\sim 2E_1+\iota(E_2)-2(2m+1)q.\]
Thus $[2\ep_1+(2m+1)]q\sim 2E_1+\iota(E_2)$. Since there is no further cancellation on the two sides, we conclude that there exists a rational function on $B$ with (odd) pole order $2\ep_1+(2m+1)$ at $q$. Since $q$ is a Weierstrass point, $2\ep_1+(2m+1)\ge 2\gamma+1$ must hold; unwinding yields $\ep_1\ge \gamma-m$. 
\end{proof}

Next, we shall show that for every $(\ep_1,\ep_2)\in F(3)$ such that $\ep_1\ge \gamma-m$ and $\ep_2=2m+1$, the corresponding 6-semigroup is Weierstrass-realizable via a triple cover described in Situation~\ref{si:reduced}. In other words, the previous lemma characterizes the only obstruction to realizability when $N=3$. We start with an instructive special case.

\begin{lem}\label{lem:b_odd2}
Every semigroup with $\ep$-vector $(\gamma,2m+1)\in F(3)$ is realizable for every $m \geq 0$.
\end{lem}

\begin{proof}
We proceed by induction on $m$. For the base case $m=0$, we use the fact that $m_2^{-1}\Theta_1\cap \Theta_{\gamma-1}\subset\Jac(B)[2]$ (\cite[Thm 2.5]{zarhin2019division}). Given a general point $q^{\pr}\in B$, it follows that there is an effective divisor $E_1$ for which $E_1-\ga q$ is reduced and $[2(E_1-\ga q)]=[q^{\pr}-q]$. In particular, the pair $(E_1,q^{\pr})$ resolves the realizability problem for $(\ep_1,\ep_2)=(\gamma,1)$.

\medskip
Now let $(E_1,E_2)$ be a pair of effective divisors as in Proposition~\ref{prop:reduced_pair} that resolves the realizability problem for $(\ep_1,\ep_2)=(\gamma,2m-1)$; in particular, we have $\deg(E_1)=\gamma$ and $\deg(E_2)=2m-1$.
Given a general point $q^{\pr}\in B$, let $D_2=E_2+2q^{\pr}$. The fact that $q^{\pr}$ is general guarantees that $D_2-(2m+1)q$ is reduced, and trivially we have 
\[D_2-(2m+1)q\sim 2[E_1+q^{\pr}-(\gamma+1)q].\]
As $\deg(E_1+q^{\pr})=\gamma+1$, we have $E_1+q^{\pr}\sim D_1+q$ for some effective divisor $D_1$ of degree $\gamma$. Thus $D_2-(2m+1)q\sim 2(D_1-\gamma q)$.

\medskip
We now check that $D_1$ is not supported at $q$. Indeed, otherwise we would have 
\[E_1\sim D^{\pr}-q^{\pr}+nq\sim D^{\pr}+\iota(q^{\pr})+(n-2)q \]
where $D^{\pr}$ is effective and not supported at $q$, and $n\ge 2$. That is, \[E_1-\gamma q\sim D^{\pr}+\iota(q^{\pr})-[\gamma-(n-2)]q.\]
By uniqueness of $q$-reduced representatives (Lemma \ref{lem:mumford}), the latter equality is only possible if $n=2$ and $E_1=D^{\pr}+\iota(q^{\pr})$, a possibility that is precluded because $q^{\pr}$ is general. 

\medskip
Finally, we check that  $h^0(\mO_B(D_1))=1$. Given the previous paragraph, either $D_1-\gamma q$ is reduced, in which case we are done; or else $D_1$ is simultaneously supported at $p$ and $\iota(p)$ for some $p \in B$. The latter possibility would also violate the uniqueness of $q$-reduced presentation for $E_1-\gamma q$, however, in view of $E_1+q^{\pr}\sim D_1+q$. By Proposition~\ref{prop:reduced_pair}, we are done. 
\end{proof}
In fact, the 
same argument leads to a general solution to the realizability problem for all $(\ep_1,\ep_2)\in F(3)$ for which $\ep_2=2m+1$ is odd and $\ep_1\ge (\gamma-m)$:

\begin{lem}\label{lem:odd_final}
For every $\ep$-vector $(k,2m+1)$ for which $k\ge \gamma-m$ and $2m+1\le\gamma$, the corresponding $6$-semigroup is realizable by a cyclic triple cover as in Situation \ref{si:reduced}. 
\end{lem}

\begin{proof}
According to Proposition~\ref{prop:reduced_pair}, any solution to the semigroup-realizability problem for $(\ep_1,\ep_2)=(k,2m+1)$ is presented by a pair of effective divisors $(E^1_{k,m},E^2_{k,m})$. In Lemma~\ref{lem:b_odd2}, every pair $(E^1_{\gamma,m},E^2_{\gamma,m})$ is such that
\begin{enumerate}
    \item $E^2_{\gamma,0}=q'$, where $q'$ is a general point on $B$; and
    \item $E^1_{\gamma,m}+q\sim E^1_{\gamma,m-1}+q_m$, $E^2_{\gamma,m}=E^2_{\gamma,m-1}+2q_m$, where $q_m$ is general.  
\end{enumerate}
In particular, we have $\supp(E^1_{\gamma,0})-\supp(E^2_{\gamma,0})\neq\emptyset$, as otherwise $(2\gamma-1)(q^{\pr}-q)\sim 0$, which would violate the generality of $q^{\pr}$. In fact $q^{\pr}\notin\supp(E^1_{\gamma,0})$, as otherwise
\[2E^{\pr}+q^{\pr}\sim (2\gamma-1)q\]
where $E^{\pr}=E^1_{\gamma,0}-q^{\pr}$, which would imply the existence of a rational function on $B$ with pole order $2\gamma-1$ at $q$. This is clearly impossible. A similar argument shows that $\supp(E^1_{\gamma,0})-\supp(E^2_{\gamma,0})$ contains no Weierstrass point of $B$.

\medskip
\noindent Now suppose that $p\in \supp(E^1_{\gamma,0})-\supp(E^2_{\gamma,0})$, and set
\[E^1_{\gamma-1,1}=E^1_{\gamma,0}-p,E^2_{\gamma-1,1}=E^2_{\gamma,0}+2\iota(p).\]
Thanks to our choice of $p$, the divisors $E^1_{\gamma-1,1}-(\gamma-1)q$, $E^2_{\gamma-1,1}-3q$ are $q$-reduced, and 
\[2[E^1_{\gamma-1,1}-(\gamma-1)q]\sim E^2_{\gamma,0}-2p+2q\sim E^2_{\gamma-1,1}-3q.\]
By Proposition~\ref{prop:reduced_pair} again, we obtain a solution for the realizability problem for the semigroup whose $\ep$-vector is $(\gamma-1,3)$. Notice that $\supp(E^1_{\gamma,1})-\supp(E^2_{\gamma,1})$ is nonempty and contains no Weierstrass point; so by induction, we obtain solutions for the realizability problem for those 6-semigroups whose $\ep$-vector is $(\gamma-m,2m+1)$ for which $0\le m<\frac{\gamma}{2} $.

\medskip
The general statement now follows from a simple construction. For $m=1,\dots,\frac{\gamma}{2}$, let $(E^1_{\gamma-m,m},E^2_{\gamma-m,m})$ be a solution to the realizability problem for 6-semigroups with $\ep$-vector $(\gamma-m,2m+1)$. For any $m^{\pr}$ between $1$ and $m$, let $q_1\hh q_{m^{\pr}}$ be distinct Weierstrass points on $B$ not in $\supp (E^1_{\gamma-m,m})\cup\{q\}$.\footnote{Such points exist because $\deg(E^1_{\gamma-m,m})=\gamma-m$, and there are $2\ga+2$ distinct Weierstrass points on $B$.}
\,Let $E^1_{\gamma-m+m',m}=E^1_{\gamma-m,m}+\sum_{k=1}^{m^{\pr}}q_k$. It follows easily from our choice of $q_1\hh q_{m^{\pr}}$ that $E^1_{\gamma-m+m^{\pr},m}-(\gamma-m+m^{\pr})q$ is $q$-reduced  and that \[2(E^1_{\gamma-m+m^{\pr},m}-(\gamma-m+m^{\pr})q)\sim 2(E^1_{\gamma-m,m}-(\gamma-m)q)\sim E^2_{\gamma-m,m}-(2m+1)q.\] 
It follows that $(E^1_{\gamma-m+m^{\pr},m},E^2_{\gamma-m,m})$ underlies a cyclic cover that realizes the 6-semigroup whose $\ep$-vector is $(\gamma-m+m',2m+1)$. 
\end{proof}

Unwinding the definitions of our semigroup indexing schemes, the following is a reformulation of Theorem~\ref{thm:realizability} in terms of standard bases.
\begin{thm}\label{thm:realizability3}
    Let ${\rm S}$ be a $(3,\ga)$-hyperelliptic semigroup of genus $g \geq 5\ga$ whose standard basis is $(6,e_1,\dots,e_5)$. Then ${\rm S}$ is Weierstrass-realizable via a cyclic cover as in Situation~\ref{si:reduced} if and only if
    \begin{enumerate}
        \item either $e_4<e_1$; or else
        \item  $e_4>e_1\ge 4g-6\ga+7-2\min\{e_2,e_5\}$.
    \end{enumerate}
\end{thm}
\begin{proof}
    It suffices to check that the given numerical hypotheses are equivalent to those of Theorem~\ref{thm:realizability}. 
    For this purpose, recall that $\ep_2=\frac{\min\{e_1,e_4\}+2}{3}-2d_{g,\ga,3}$, which has the same parity as $\frac{\min\{e_1,e_4\}+2}{3}$. The latter is even if and only if $\min\{e_1,e_4\}=e_4$, i.e. $e_4<e_1$.

\medskip
    \noindent Whenever $\ep_2$ is odd, i.e. whenever $e_1<e_4$, the 
    lower bound on $\ep_2$ in Theorem~\ref{thm:realizability} becomes
    \[2\ga+1-2\bigg(\frac{\min\{e_2,e_5\}+1}{3}-d_{g,\ga,3}\bigg)\le \frac{e_1+2}{3}-2d_{g,\ga,3}.\]
Substituting $d_{g,\ga,3}=\frac{(2g-2)-3(2\ga-2)}{6}$ and simplifying, we obtain \[e_1\ge 4g-6\ga+7-2\min\{e_2,e_5\}.\]
The other half of the inequality is guaranteed by Equation~\eqref{MP_inequalities_alt}, i.e. by the additive structure of ${\rm S}$. 
\end{proof}
With the aid of 
a computer, we found all such numerical semigroups of genus $g \leq 100$. The code may be found at \url{https://github.com/npflueger/cyclicCoverSemigroups} and the list of 
examples may be found at \url{https://github.com/npflueger/cyclicCoverSemigroups/blob/main/listSemigroups-output.txt}.

\section{The multiplication profile of a point on a hyperelliptic Jacobian}\label{sec:mult_profile}
Thus far, we have used three different indexing schemes 
for $2N$-semigroups ${\rm S}$:
\begin{enumerate}
    \item The standard basis: $e_0,e_1\hh e_{2N-1}$;
    \item the normalized standard basis: $a_j:=\frac{\min\{e_{N-j},e_{2N-j}\}+j}{N}$, $j=1\hh N-1$;  
    \item and, whenever ${\rm S}$ is $(N,\ga)$-hyperelliptic of genus $g \geq (2N-1)\ga$, and $d_{g,\gamma,N}=\frac{(2g-2)-N(2\gamma-2)}{N(N-1)}$ is an integer, 
    by $\ep_j:=a_j-j\cd d_{g,\ga,N}$, $j=1\hh N-1$. 
\end{enumerate}
The third item is most relevant when ${\rm S}$ is realized by a degree-$N$ cyclic cover $\pi:C\to B$, where $C$ (resp., $B$) is a curve of genus $g$ (resp., $\ga$). In this case, the sequence $(\ep_1\hh \ep_{N-1})$ arises from the images under multiplication by $m_j$, $j=1,\dots, N-1$ of an element of the Jacobian $\Jac(B)$. The following definition makes this explicit, and is consistent with our usage in the preceding section.
\begin{defn}
Given $x\in\Theta^0_d\subset \Jac(B)$, let $\ep_j$ denote the unique integer for which $m_j(x)\in\Theta^0_{\ep_j}$ for every positive integer $j$. We refer to the integer sequence $m(x):=(\ep_1=d,\ep_2,\ep_3,\hdots)$ 
as the \textit{multiplication profile} of $x$. We also refer to the subsequence $m(x)_{N-1}:=(\ep_j)_{j=1}^{N-1}$ as the $(N-1)$\textit{-th truncated multiplication profile} of $x$.
\end{defn}
\begin{rem}
When $\ga=1$, possible multiplication profiles are as follows: $m(x)=(0,0,\hdots)$ when $x$ is the neutral element; $m(x)=(1,1,\hdots)$ when $x$ is 
non-torsion; and otherwise $m(x)$ is of some finite order $n$, in which case the multiplication profile is determined by its truncation $m_n(x)=(1\hh 1,0)$. Throughout the remainder of this section, we assume that $\ga\ge2$. 
\end{rem}

Hereafter, we will focus on {\it point classes} that arise from Abel-Jacobi maps of degree one.
\begin{nt}
For a fixed point $p\neq q$, let $R_N(p)$ denote the $q$-reduced representative of $[N(p-q)]$. 
\end{nt}

\begin{lem}\label{lem:nece1}
For every index $j$, we have $\ep_j>\ep_{j+1}$ if and only if $\iota(p)\in\supp(R_j(p))$ for every $p\neq q$. 
\end{lem}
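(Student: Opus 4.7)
\medskip
\noindent\textbf{Proof plan.} Write $R_j(p) = E_j - \ep_j q$, where $E_j = \sum_{i=1}^{\ep_j} p_i$ is the effective part; by definition of $q$-reducedness, no $p_i$ equals $q$ or $\iota(p_k)$ for $k \neq i$, and $\ep_j \leq \ga$. Since $R_{j+1}(p)$ is the unique $q$-reduced representative of $[R_j(p)] + [p-q] = [E_j + p - (\ep_j+1)q]$, the plan is to inspect the effective divisor $E_j + p$ and track how Cantor's reduction algorithm (Lemma~\ref{lem:Cantor_reduction}) transforms the semi-reduced presentation $(u, v)$ of $E_j + p - (\ep_j+1)q$, where $\deg u = \ep_j + 1$.

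\medskip
The argument then splits into two cases. If $\iota(p) \in \supp(E_j)$, say $p_1 = \iota(p)$, then using the relation $p + \iota(p) \sim 2q$ coming from the hyperelliptic $g^1_2$, we rewrite
\[
E_j + p - (\ep_j+1)q \sim (E_j - p_1) - (\ep_j-1)q.
\]
Because $E_j - p_1$ is a sub-divisor of $E_j$ (hence satisfies the involution-pair condition) of degree $\ep_j - 1 \leq \ga - 1$, the right-hand side is $q$-reduced. By uniqueness (Lemma~\ref{lem:mumford}), $\ep_{j+1} = \ep_j - 1 < \ep_j$.

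\medskip
If instead $\iota(p) \notin \supp(E_j)$, then $E_j + p - (\ep_j+1)q$ is already semi-reduced (no two of its support points are swapped by $\iota$, and none equal $q$). When $\ep_j < \ga$, the degree bound $\ep_j + 1 \leq \ga$ is automatic, so this divisor is itself $q$-reduced and $\ep_{j+1} = \ep_j + 1$. When $\ep_j = \ga$, one runs a single step of Cantor's algorithm: since $\deg v < \deg u = \ga + 1$ forces $2\deg v \leq 2\ga < 2\ga + 1 = \deg f$, the recursion
\[
\deg u_1 = \max(2\ga+1,\, 2 \deg v) - \deg u = (2\ga+1) - (\ga+1) = \ga
\]
shows that after one step we land in a $q$-reduced presentation whose effective part has degree exactly $\ga$, forcing $\ep_{j+1} = \ga = \ep_j$. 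In either situation $\ep_{j+1} \geq \ep_j$, which completes the equivalence.

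\medskip
The main delicate point will be ruling out $\ep_{j+1} < \ga$ in the borderline case $\ep_j = \ga$ with $\iota(p) \notin \supp(E_j)$; this is where one really needs the explicit degree recursion of Cantor's algorithm rather than just a Riemann--Roch count (since $h^0(E_j + p) \geq 2$ always holds for degree $\ga+1$ and therefore supplies no obstruction by itself). Beyond that, the argument is essentially bookkeeping with Mumford presentations and the hyperelliptic involution.
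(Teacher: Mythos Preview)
Your proof is correct. The paper's argument and yours share the same core ingredients—the hyperelliptic relation $p+\iota(p)\sim 2q$ and uniqueness of $q$-reduced representatives—but run in opposite directions. The paper starts from the identity $R_j(p)\sim R_{j+1}(p)+(\iota(p)-q)$ and argues in each direction that the relevant side is already $q$-reduced; in particular, when $\ep_{j+1}<\ep_j$ it deduces $R_j(p)=R_{j+1}(p)+(\iota(p)-q)$ directly, so $\iota(p)$ visibly lies in the support. You instead start from $R_{j+1}(p)\sim R_j(p)+(p-q)$ and case-split on whether $\iota(p)\in\supp(E_j)$. This forward direction forces you to confront the borderline case $\ep_j=\ga$ with $\iota(p)\notin\supp(E_j)$, where $E_j+p$ has degree $\ga+1$ and is only semi-reduced; your use of one Cantor step (and the degree computation $\deg u_1=(2\ga+1)-(\ga+1)=\ga$) is exactly what is needed there. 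The paper's backward formulation avoids this subcase entirely, which is why its proof is shorter, but your treatment has the advantage of simultaneously proving the trichotomy $\ep_{j+1}\in\{\ep_j-1,\ep_j,\ep_j+1\}$ that the paper states separately as Corollary~\ref{cor:nece2}.
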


\begin{proof}
Note that $R_j(p)\sim R_{j+1}(p)+(\iota(p)-q)$. As $\ep_{j+1}<\ep_N$, it follows that $R_j(p)=R_{j+1}(p)+(\iota(p)-q)$. 
Conversely, suppose $R_j(p)=E+(\iota(p)-q)$, where $E\in\Theta_{\ep_j-1}^0$. Then $E$ is $q$-reduced, and $R_{j+1}(p)=E$.
\end{proof}

\begin{cor}\label{cor:nece2}
We have $|\ep_j-\ep_{j+1}|\le 1$ for every $j$; and $\ep_j\neq \ep_{j+1}$ whenever $\ep_j<\ga$.
\end{cor}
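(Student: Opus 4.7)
The plan is to track $R_{j+1}(p) \sim R_j(p) + (p-q)$ through the $q$-reduced presentation, leveraging the dichotomy supplied by Lemma~\ref{lem:nece1}. Writing $R_j(p) = E_j - \ep_j q$ with $E_j$ effective of degree $\ep_j$ and $q$-reduced, we get $R_{j+1}(p) \sim E_j + p - (\ep_j+1)q$; bringing this into $q$-reduced form exactly records the jump $\ep_{j+1} - \ep_j$. The three subcases split according to whether $\iota(p)$ lies in $\supp(E_j)$ and, if not, whether $\ep_j$ has already saturated at $\ga$.

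In the first subcase, $\iota(p) \in \supp(E_j)$, which by Lemma~\ref{lem:nece1} corresponds to $\ep_{j+1} < \ep_j$. Writing $E_j = \iota(p) + E_j'$ (with $E_j'$ effective, as $\iota(p)$ appears in $E_j$ with some multiplicity $\ge 1$) and using $p + \iota(p) \sim 2q$, we obtain $R_{j+1}(p) \sim E_j' - (\ep_j - 1)q$. Since $E_j'$ is a sub-divisor of the $q$-reduced $E_j$ and of degree $\ep_j - 1 \le \ga - 1$, it is itself $q$-reduced. Hence $\ep_{j+1} = \ep_j - 1$, so the decrease is by exactly one.

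In the complementary case $\iota(p) \notin \supp(E_j)$, the divisor $E_j + p$ is semi-reduced of degree $\ep_j + 1$. When $\ep_j < \ga$, this divisor already satisfies the degree bound and is therefore $q$-reduced, giving $\ep_{j+1} = \ep_j + 1$. When $\ep_j = \ga$, I would run one step of Cantor's algorithm (Lemma~\ref{lem:Cantor_reduction}) on the Mumford pair $(u,v)$ representing $E_j + p$: here $\deg u = \ga + 1$ and $\deg v \le \ga$, so $\deg(f - v^2) = 2\ga+1$, and therefore the new monic polynomial $u_1 \sim (f - v^2)/u$ has degree exactly $\ga$. Thus one reduction step suffices to produce a $q$-reduced representative of degree exactly $\ga$, yielding $\ep_{j+1} = \ga = \ep_j$.

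Assembling the three subcases shows $|\ep_{j+1} - \ep_j| \le 1$ in all situations, with the equality $\ep_{j+1} = \ep_j$ occurring only in the final subcase where $\ep_j = \ga$; this is precisely the second half of the corollary. The only nontrivial point to verify carefully is the degree computation in the $\ep_j = \ga$ subcase — namely, that a single Cantor step outputs a divisor of degree \emph{exactly} $\ga$ rather than something smaller — but this is immediate from the formula $\deg u_1 = \max\{2\ga+1,\,2\deg v\} - \deg u$ applied with $\deg u = \ga+1$ and $\deg v \le \ga$.
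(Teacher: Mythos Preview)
Your proof is correct and follows essentially the same approach as the paper's, just organized differently: you split first on whether $\iota(p)\in\supp(E_j)$ and then on whether $\ep_j<\ga$, while the paper splits first on the sign of $\ep_{j+1}-\ep_j$ (invoking Lemma~\ref{lem:nece1}) and only afterwards on the support. Your explicit Cantor step in the $\ep_j=\ga$ subcase is a nice addition---the paper's argument elides this point, implicitly relying on the fact that a semi-reduced divisor of degree $\ga+1$ always reduces to one of degree exactly $\ga$.
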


\begin{proof}
According to Lemma~\ref{lem:nece1}, we have $\ep_{j+1}=\ep_j-1$ whenever $\ep_{j+1}<\ep_j$. If $\ep_{j+1}>\ep_j$, then because $R_j(p)\sim R_{j+1}(p)+(\iota(p)-q)$ is not $q$-reduced, we must have $R_j(p)=R_{j+1}(p)-(p-q)$, and hence $\ep_{j+1}=\ep_j+1$. On the other hand, if $\ep_j<\ga$, then either $\iota(p)\in\supp(R_j(p))$ and $\ep_{j+1}<\ep_j$; or else $\iota(p)\notin\supp(R_j(p))$ and $\ep_{j+1}>\ep_j$.
\end{proof}

\begin{lem}\label{lem:nece3}
Fix $p\neq q$. There is no index $j$ for which $\ep_{j+1}=\ep_j+1=\ep_{j+2}+1$, unless $\ep_j=\ep_2=0$.
\end{lem}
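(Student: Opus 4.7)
The plan is to derive a direct identity for the $q$-reduced representative of $(j+1)(p-q)$ in terms of that of $j(p-q)$, which will force $p$ to coincide with its hyperelliptic conjugate. First I would extract the two geometric translations of the hypothesis via Lemma~\ref{lem:nece1}: the strict inequality $\ep_j<\ep_{j+1}$ forces $\iota(p)\notin\supp(R_j(p))$, while the strict inequality $\ep_{j+2}<\ep_{j+1}$ forces $\iota(p)\in\supp(R_{j+1}(p))$.

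The central step is to show that $R_{j+1}(p)=R_j(p)+p$ as effective divisors. To this end, note that $R_j(p)+p-(\ep_j+1)q\sim (j+1)(p-q)$. The divisor $R_j(p)+p$ is effective of degree $\ep_j+1=\ep_{j+1}\leq \ga$, avoids $q$ (since $p\ne q$ and $R_j(p)$ is $q$-reduced), and -- crucially -- is not supported at any pair $\{r,\iota(r)\}$, since $R_j(p)$ already satisfies this property and $\iota(p)\notin\supp(R_j(p))$ by the previous paragraph. Thus $R_j(p)+p-(\ep_j+1)q$ satisfies the three conditions of Definition~\ref{defn:reduced_div}, and Mumford's uniqueness (Lemma~\ref{lem:mumford}) identifies it with the $q$-reduced representative of $(j+1)(p-q)$.

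Combining the two pieces, $\iota(p)\in\supp(R_{j+1}(p))=\supp(R_j(p))\cup\{p\}$, while $\iota(p)\notin\supp(R_j(p))$. The only possibility is $\iota(p)=p$, meaning $p$ is a hyperelliptic Weierstrass point distinct from $q$. Then $2(p-q)\sim p+\iota(p)-2q\sim 0$ (as $q$ is itself a Weierstrass point), so $[p-q]$ is a nonzero $2$-torsion class in $\Jac(B)$; hence the profile is $(\ep_k)_{k\geq 1}=(1,0,1,0,\ldots)$. In this profile a ``peak'' $\ep_{j+1}=\ep_j+1=\ep_{j+2}+1$ can only occur for even $j$, in which case $\ep_j=0$ and $\ep_2=0$ simultaneously, matching the stated exception.

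I do not anticipate a serious obstacle: the argument is essentially an application of uniqueness of Mumford's $q$-reduced representative, together with the elementary hyperelliptic fact that $2(p-q)\sim 0$ for Weierstrass $p,q$. The only delicate point is verifying that $R_j(p)+p$ really is $q$-reduced, which depends on the bound $\ep_j+1\leq\ga$ (automatic from $\ep_{j+1}\leq\ga$) together with $\iota(p)\notin\supp(R_j(p))$.
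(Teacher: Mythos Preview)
Your proposal is correct and follows essentially the same approach as the paper's proof: both arguments establish $R_{j+1}(p)=R_j(p)+(p-q)$ (you via explicitly verifying $q$-reducedness and invoking Mumford uniqueness, the paper via the argument already given in the proof of Corollary~\ref{cor:nece2}), observe that $\iota(p)$ lies in the support of $R_{j+1}(p)$, and conclude $p=\iota(p)$. Your version is more carefully spelled out, but the underlying idea is identical.
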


\begin{proof}
The equalities $\ep_{j+1}=\ep_j+1=\ep_{j+2}+1$ force $R_{j+1}(p)=R_{j}(p)+(p-q)=R_{j+2}(p)+(\iota(p)-q)$, in which case $p=\iota(p)$ necessarily.
\end{proof}

\noindent In light of the above, we now develop two instructive examples.

\begin{ex}\label{ex1}
Whenever $x=[p-q]$ is a general point of $\Theta_1$, Lemma~\ref{lem:nece3} implies that
\[m(x)_{\ga+1}=(1,2\hh\ga,\ga).\]

\medskip
\noindent On the other hand, whenever $x\in \Theta_1$ is $2$-torsion, we have $m(x)=(1,0,1,0,\hdots)$.

\medskip
\noindent Finally, whenever $x\in \Theta_1$ is $(2\ga+1)$-torsion, we have $m(x)_{2\ga+1}=(1,2\hh\ga,\ga,\ga-1\hh,1,0)$, and $m(x)_{2\ga+1}$ uniquely determines $m(x)$. 

\medskip
\noindent The above gives a complete solution for Weierstrass-realizability by cyclic covers whenever $2\le N\le \ga+2$ and $a_1=1+d_{g,\ga,N}$. 
\end{ex}

\begin{ex}
Let $a$ be a general element of $k^{\times}$, $1\le n\le \ga+1$, and $f(x)=(x-a)^{\ga+n}x^{\ga-n+1}+1$. Working over the hyperelliptic curve $B:y^2=f(x)$, let $p=(a,1)$ and $p^{\pr}=(0,1)$. Note that $\iota(p)=(a,-1)$ and $\iota(p^{\pr})=(0,-1)$.

\medskip
\noindent Now $\dv(y-1)=(\ga+n)p+(\ga-n+1)p^{\pr}-(2\ga+1)q$. In particular, we have $[(\ga+n)(p-q)]=[(\ga-n+1)(\iota(p^{\pr})-q)]$, and it follows that  
\[
[(\ga+m)(p-q)]=[(\ga-n+1)(\iota(p^{\pr})-q)+(m-n)(p-q)].
\]
Note that $(\ga-n+1)(\iota(p^{\pr})-q)+(m-n)(p-q)$ is $q$-reduced for every $m=n \hh 2n-1$. We claim that
\[m([p-q])_{\ga+2n-1}=(1,2\hh\ga,\ga,\ga-1\hh\ga-n+1,\ga-n+2\hh \ga).\] 
Indeed, given our previous discussion, it only remains to show that $\ep_{\ga+1+j}=\ga-j$ for every $j=1\hh n-2$. This follows, in turn, from the fact that $|\ep_k-\ep_{k+1}|\le 1$: letting $R_k$ denote the $q$-reduced presentation of $m_k([p-q])$, we have $R_k\sim R_{k+1}+\iota(p)-q$.  

\medskip
\noindent Notice that the case in which $[p-q]$ is $(2\ga+1)$-torsion is covered by this example.
\end{ex}

\subsection{Multiplication profiles of point classes}
We begin 
by recapitulating what we know about the
multiplication profile of an element of $\Theta_1^{0}$.
\begin{prop}\label{prop:profile_conditions}
The multiplication profile $(\ep_i)_{i=1}^{\infty}$ of an element of $[p-q]\in\Theta_1^0(B)\subset \Jac(B)$, for some hyperelliptic curve $B$ of genus $\ga$ satisfies the following conditions:
\begin{enumerate}
    \item $\ep_1=1$, and $0\le \ep_j\le\ga$ for every $j$;
    \item $\ep_i+\ep_j\ge \ep_{i+j}$;
    \item $|\ep_j-\ep_{j+1}|\le 1$ and if $\ep_j<\ga$, then $\ep_j\neq \ep_{j+1}$;
    \item\label{it1} there is no $j$ for which $\ep_{j+1}=\ep_j+1=\ep_{j+2}+1$, unless $\ep_j=\ep_2=0$;
    \item the first index $N$ for which $\ep_N=0$ is either $N=2$ or else $N\ge 2\ga+1$, in which case $\ep_{sN+r}=\ep_{r}$ and $\ep_{N-r}=\ep_{r}$ for every $1\le r<N$ and $s\ge 0$.
\end{enumerate}
\end{prop}
\begin{proof}
Items (1), (2) follow from the definition of $\ep_j$; (3) is the content of Corollary~\ref{cor:nece2}; (4) is Lemma \ref{lem:nece3}; while (5) follows from the definition of torsion order and the non-existence of torsion points of small orders (\cite[Corollary 2.8]{zarhin2019division}).
\end{proof}

\begin{conj}\label{conj:profile}
Any sequence $(\ep_i)_{i=1}^{\infty}$ of non-negative integers that satisfies the conditions of Proposition~\ref{prop:profile_conditions} is realized as the multiplication profile $(\ep_i)_{i=1}^{\infty}$ of some $[p-q]\in\Theta_1^0(B)\subset \Jac(B)$.
\end{conj}

To tackle this conjecture, we analyze how to read the multiplication profile of a specific point $p-q$ from the affine equation $y^2=f(x)$ of the curve $B$. Without loss of generality, we assume $f(0)\neq 0$ and $p=(0,\sqrt{f(0)})$, where $\sqrt{f(0)}$ is a fixed square root of $f(0)$. 

\begin{defn}
Given an effective divisor $D$ on $B$ of degree $d$, let $p(x)-q(x)y$ be a rational function whose associated principal divisor is $D-dq$. We shall refer to $d$ as the \textit{support length} of this function.
\end{defn}

The $j$-th entry $\ep_j$ of $m([p-q])$ is codified by a rational function $p_j(x)-q_j(x)y\in K(B)$ that has minimal support length $d$ among all functions that vanish in $p$ to order are at least $j$ and are regular away from $q$. For this minimizing value of $d$, we have $\ep_j=d-j$; moreover, the pair $(p_j,q_j)$ is well-defined up to a scalar multiple. Hereafter, we let $\sum_{i=0}^{\infty}a_ix^i$ denote the image of $y$ in $\widehat{\mO}_{\pp^1,0}=k[\![x]\!]$ under the natural map $\mO_{B,p}\to \mO_{\pp^1,0}\to \widehat{\mO}_{\pp^1,0}$ induced by the hyperelliptic structure map $B\to\pp^1$. 

\begin{lem}\label{lem:p_j,q_j}
Up to a scalar multiple, $(p_j,q_j)$ is the unique pair of polynomials $(p(x),q(x))$ for which $x^j|(p-(\sum a_ix^i)\cd q)$ that minimizes the quantity
\[
\max\{2\deg(p), 2\ga+1+2\deg(q)\}.
\]
\end{lem}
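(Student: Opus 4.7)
The plan is to unwind the three defining properties of the rational function $p_j(x)-q_j(x)y$—regularity away from $q$, vanishing at $p$ to order at least $j$, and minimal support length—into three explicit conditions on the pair of polynomials $(p(x),q(x))$. Most of the work is a single valuation calculation at $q$.

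First, I would note that $B\setminus\{q\}$ is affine with coordinate ring $k[x,y]/(y^2-f(x))$, so every rational function on $B$ that is regular away from $q$ is uniquely of the form $p(x)-q(x)y$ with $p,q\in k[x]$. Next, because $f(0)\neq 0$ the point $p=(0,\sqrt{f(0)})$ is not a Weierstrass point, so $x$ is a uniformizer at $p$ and the natural map $\mc{O}_{B,p}\to \widehat{\mc{O}}_{\pp^1,0}=k[\![x]\!]$ induced by the hyperelliptic structure sends $y$ to the distinguished square root $\sum_{i=0}^{\infty}a_ix^i$ of $f(x)$. Hence vanishing of $p(x)-q(x)y$ at $p$ to order at least $j$ is equivalent to $x^j\mid p(x)-q(x)\sum_i a_ix^i$ in $k[\![x]\!]$.

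The heart of the argument is the computation of the support length of $p(x)-q(x)y$, which—since the function is regular away from $q$—coincides with the pole order $-v_q(p-qy)$. Using the standard valuations $v_q(x)=-2$ and $v_q(y)=-(2\ga+1)$ on the odd hyperelliptic curve, one finds $v_q(p(x))=-2\deg p$ and $v_q(q(x)y)=-(2\ga+1+2\deg q)$. Since one of these integers is even and the other is odd, they can never coincide, and so the ultrametric inequality for $v_q$ is in fact an equality:
\[
v_q(p-qy)=\min\{-2\deg p,\ -(2\ga+1+2\deg q)\}=-\max\{2\deg p,\ 2\ga+1+2\deg q\}.
\]
Thus the support length of $p-qy$ equals exactly the quantity to be minimized in the lemma, and minimizing it subject to the divisibility condition $x^j\mid p-q\sum_i a_ix^i$ coincides with the characterization of $(p_j,q_j)$ given in the paragraph preceding the lemma.

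Uniqueness up to scalar then follows from the observation that, at the minimal value $d=j+\ep_j$, the effective zero divisor of $p-qy$ on $B$ necessarily takes the form $jp+Z$ with $Z-(d-j)q$ a $q$-reduced divisor of degree $d-j\le\ga$: if $Z$ contained some hyperelliptic pair $(a,b)+(a,-b)$, then $(x-a)$ would divide both $p$ and $q$, and dividing out by $(x-a)$ would decrease the quantity $\max\{2\deg p,2\ga+1+2\deg q\}$ by $2$, contradicting minimality of $d$. Any two minimizing functions then have $q$-reduced effective parts of the same degree representing the same divisor class, hence equal by Lemma~\ref{lem:mumford}; the two minimizing functions share a full divisor and therefore differ by a scalar. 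The only genuine obstacle in this plan is the parity-of-valuations calculation, which is the pivotal input but follows immediately from $\deg f=2\ga+1$.
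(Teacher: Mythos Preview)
Your proof is correct and follows the same approach as the paper's: translate vanishing at $p$ into the divisibility condition via the local expansion $y\mapsto\sum a_ix^i$, and identify the support length with $\max\{2\deg p,\,2\ga+1+2\deg q\}$ by computing $v_q$. The paper's proof is terser---it simply asserts the support-length formula as ``plain''---whereas you spell out the parity argument that rules out cancellation between $v_q(p(x))$ and $v_q(q(x)y)$.

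One small addition worth noting: you supply an explicit uniqueness argument via $q$-reducedness of the residual divisor and Lemma~\ref{lem:mumford}, which the paper's proof omits entirely (uniqueness is asserted in the paragraph preceding the lemma but not argued there either). Your argument for this is sound; the only cosmetic imprecision is writing the zero divisor as $jp+Z$ when in principle the vanishing order at $p$ could exceed $j$, but this does not affect the reasoning since any hyperelliptic pair in the full zero divisor still forces a common linear factor of $p$ and $q$.
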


\begin{proof}
The map $\mO_{B,p}\to \mO_{\pp^1,0}$ induced by the hyperelliptic structure map sends $m_p$ to $m_0=(x)$; so
the rational function $p(x)-q(x)y \in K(B)$ vanishes in $p$ to order at least $j$ if and only if $x^j$ divides $p(x)-(\sum a_ix^i)\cd q(x)$.
On the other hand, $p(x)-q(x)y$ plainly has support length $\max\{2\deg(p), 2\ga+1+2\deg(q)\}$.

\end{proof}
\begin{ex}The following are immediate consequences of Lemma~\ref{lem:p_j,q_j}:
\begin{enumerate}
    \item For every $j=1\hh\ga$, we have $(p_j(x),q_j(x))=(x^j,0)$.
    \item $(p_{\ga+1}(x),q_{\ga+1}(x))=(\sum_{i=0}^{\ga}a_ix^i,1)$.
    \item $(p_{\ga+2}(x),q_{\ga+2}(x))=(\sum_{i=0}^{\ga+1}a_ix^i,1)$.
\item Suppose $[p-q]$ is $N$-torsion. Then $\deg p_N(x)=\frac{N}{2}$ and $\deg q_N(x)\le\frac{N-(2\ga+2)}{2}$ whenever $N$ is even; while 
$\deg p_N(x)\le \frac{N-1}{2}$ and $\deg q_N(x)=\frac{N-(2\ga+1)}{2}$ whenever $N$ is odd.
\end{enumerate}
\end{ex}

\begin{defn}
We say that $\ep_{j_0}$ is a \textit{critical entry} of a multiplication profile $(\ep_j)$ whenever $\ep_{j_0}<\min(\ep_{j_0-1},\ep_{j_0+1})$. 
\end{defn}

Whenever $\ep_j$ is a critical entry, $\dv(p_j(x)-q_j(x)y)$ is not supported at $\iota(p)$. This means that $x$ divides neither $p_j$ nor $q_j$. Furthermore, the fact that $\dv(p_j(x)-q_j(x)y)$ has minimal support length means that $p_j$ and $q_j$ cannot share other linear terms; so  
$\gcd(p_j,q_j)=1$. Now set $i=j+\ep_j-\ga$. We then have 
$p_i=p_{i+1}=\dots=p_j$ and $q_i=q_{i+1}=\dots=q_j$, while 
$p_{j+k}=x^k p_j$ and $q_{j+k}=x^k q_j$ for $k=1\hh \ga-\ep_j$.

\begin{prop}\label{prop:coeff_prof}
Fix $n\ge\ga+2$. When $n+\ga$ is odd (resp. even), we have $\ep_n<\ga$ if and only if $D_n=0$, where
\[
D_n:=
\begin{vmatrix}
a_{\ga+2}&a_{\ga+3}&\hdots& a_{\frac{n+\ga+1}{2}}\\
a_{\ga+3}&a_{\ga+4}&\hdots& a_{\frac{n+\ga+3}{2}}\\
\vdots&\vdots&& \vdots\\
a_{\frac{n+\ga+1}{2}}&a_{\frac{n+\ga+3}{2}}&\hdots& a_{n-1}
\end{vmatrix} \qquad (\text{resp. }  D_n:=\begin{vmatrix}
a_{\ga+1}&a_{\ga+2}&\hdots& a_{\frac{n+\ga}{2}}\\
a_{\ga+2}&a_{\ga+3}&\hdots& a_{\frac{n+\ga+2}{2}}\\
\vdots&\vdots&& \vdots\\
a_{\frac{n+\ga}{2}}&a_{\frac{n+\ga+2}{2}}&\hdots& a_{n-1}
\end{vmatrix}).
\]
\end{prop}
\begin{proof}
The idea is to calculate $p_n$ and $q_n$ explicitly. 
Since the argument in each of the two cases is nearly identical, we only discuss the case in which $n+\ga$ is odd. To begin, recall that the support length of $p_n(x)-q_n(x)y$ is $\max\{2\deg p_n,2\ga+1+2\deg q_n\}$. For this quantity to be smaller than $\ga+n$, we must have $\deg q_n<\frac{n-\ga-1}{2}$. Now set $q_n(x)=\sum c_ix^i$. Then $(\sum a_ix^i)q_n(x)=\sum e_kx^k$, where $e_k=\sum_{i+j=k}c_ia_j$. Note that $p_n(x)$ is merely an appropriate truncation of $(\sum a_ix^i)q_n(x)$. Clearly we also require $\deg p_n<\frac{n+\ga}{2}$; it follows that $e_{\frac{n+\ga+1}{2}}=\hdots=e_{n-1}=0$. By definition of the $e_k$, the linear system of equations
\[\begin{bmatrix}
a_{\ga+2}&a_{\ga+3}&\hdots& a_{\frac{n+\ga+1}{2}}\\
a_{\ga+3}&a_{\ga+4}&\hdots& a_{\frac{n+\ga+3}{2}}\\
\vdots&\vdots&& \vdots\\
a_{\frac{n+\ga+1}{2}}&a_{\frac{n+\ga+3}{2}}&\hdots& a_{n-1}
\end{bmatrix}\begin{bmatrix}
x_{\frac{n-\ga-3}{2}}\\
x_{\frac{n-\ga-1}{2}}\\
\vdots\\
x_0
\end{bmatrix}=0\]
has a non-zero solution, and the result follows.
\end{proof}
\begin{rem}\label{rem:first_entries}
An upshot of Proposition~\ref{prop:coeff_prof} is that $\ep_{\ga+2}=\ga-1$ if and only if $a_{\ga+1}=0$; and $\ep_{\ga+3}<\ga$ if and only if $a_{\ga+2}=0$.
\end{rem}

From our previous discussion about restrictions on multiplication profiles, it is clear that whenever there are indices $j>i$ for which $\ep_i=\ep_j=\ga$ and $\ep_k<\ga$ for all $k$ strictly between $i$ and $j$, we in fact have
\[(\ep_i,\ep_{i+1}\hh \ep_{j-1},\ep_j)=(\ga,\ga-1\hh\ga-\frac{j-i}{2}+1, \ga-\frac{j-i}{2}, \ga-\frac{j-i}{2}+1\hh \ga-1,\ga).\]
Consequently, 
every multiplication profile is uniquely characterized by the vanishing or non-vanishing of the corresponding determinants $D_n$.

\subsection{Multiplication profiles of torsion points}\label{sec:mult_profiles_of_torsions}
Suppose $x=[p-q]\in \Theta^0_1$ is an $N$-torsion point. Its multiplication profile $m(x)=(\ep_j)$ is then determined by $(\ep_{\ga+2}\hh \ep_{\lfloor\frac{N}{2}\rfloor})$. Moreover, the following rules apply:
\begin{enumerate}
\item $\ep_j=0$ if and only if $N|j$;
    \item $\ep_{\ga+2} \in \{\ga-1,\ga\}$;
    \item $\ep_i+\ep_j\ge \ep_{i+j}$;
    \item there is no $j$ for which $(\ep_j,\ep_{j+1},\ep_{j+2})=(a,a+1,a)$ for any $a$; and
    \item there is no $j$ for which $(\ep_j,\ep_{j+1})=(a,a)$ with $a<\ga$. 
\end{enumerate}
In particular, we have either $\ep_{{N\over 2}}=\ep_{{N\over 2}-1}=\ga$ or $\ep_{{N\over 2}}=\ep_{{N\over 2}-1}-1$ when $N$ is even; and $\ep_{{N-1\over 2}}=\ga$ when $N$ is odd.

\medskip
Now suppose that $N\ge 2\ga+1$, and that $y^2=f(x)$ is the affine equation of a hyperelliptic curve with an $N$-torsion divisor $p-q$, where $p=(0,\sqrt{f(0)})$ and $\sum a_ix^i$ is a chosen square root of $f(x)$ inside $k[\![x]\!]$; then as explained previously, there exist polynomials $p_N(x),q_N(x)$ 
for which $x^N$ divides $p_N(x)-(\sum a_ix^i)q_N(x)$. 
Conversely, if our aim is to construct $f(x)$ so that the associated hyperelliptic curve has a divisor of particular torsion order, one possible approach involves specifying an approximating power series $\sum a_ix^i$ with certain properties. This approach serves our purposes well, as we may 
control $m([p-q])$ by imposing conditions on the coefficients $a_i$. 

\begin{prop}\label{prop:multiplication_cond}
Given $p$, $q$, $f(x)$, and $\sum a_ix^i$ as before, suppose that $[p-q]$ has order precisely $N$. Let $(\ep_j)_{j=1}^{\infty}$ be a sequence satisfying the five necessary conditions stated in the beginning of this subsection. Then $m([p-q])=(\ep_j)$ if and only if $\ep_j=0$ for precisely those $j$ between $\ga+1$ and $\lfloor\frac{N}{2}\rfloor$ for which $\ep_j<\ga$.
\end{prop}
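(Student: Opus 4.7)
The plan is to reduce the characterization to a rigidity statement on the window $[\ga+1, \lfloor N/2\rfloor]$ and then invoke Lemma~\ref{lem:coeff_prof} there. First I would observe that, for an $N$-torsion class $[p-q]$, the full profile $m([p-q])$ is reconstructed from its restriction to $[\ga+1, \lfloor N/2\rfloor]$: entries for $j\le\ga+1$ are forced to equal $j$ for $j\le\ga$ and $\ga$ for $j=\ga+1$, while entries for $j > \lfloor N/2\rfloor$ are obtained from the symmetry $\ep_{N-r}=\ep_r$ (Lemma~\ref{lem:basic}(5)) and the periodicity $\ep_{sN+r}=\ep_r$ built into torsion. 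The same rules are baked into the five necessary conditions the candidate sequence $(\ep_j)$ is assumed to satisfy, so agreement on $[\ga+1, \lfloor N/2\rfloor]$ is equivalent to agreement everywhere.

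Next I would show that, given the five necessary conditions, the restriction of $(\ep_j)$ to $[\ga+1, \lfloor N/2\rfloor]$ is determined by the subset $S := \{j \in [\ga+2, \lfloor N/2\rfloor] : \ep_j < \ga\}$. Subadditivity together with $\ep_1 = 1$ forces $|\ep_j - \ep_{j+1}| \leq 1$; combining with condition (5) yields strict change by $\pm 1$ on indices in $S$, while condition (4) rules out local maxima below $\ga$. It follows that the complement of $S$ in the window consists of $\ga$-plateaus, while each maximal run of indices in $S$ has a unique minimum flanked by a strict descent and then a strict ascent, hence odd length $2k-1$ with values forming the symmetric V-shape $(\ga-1, \ga-2, \ldots, \ga-k, \ldots, \ga-2, \ga-1)$. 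In particular $(\ep_j)$ on the window is uniquely recovered from $S$.

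To finish, I would apply Lemma~\ref{lem:coeff_prof} to the actual multiplication profile $(\ep'_j)=m([p-q])$: for each $n \ge \ga+2$, one has $\ep'_n < \ga$ if and only if the determinant $D_n$ vanishes. So the subset $S'$ attached to the true profile is exactly $\{n \in [\ga+2, \lfloor N/2\rfloor] : D_n = 0\}$. Combining with the previous paragraph, $m([p-q])=(\ep_j)$ as infinite sequences if and only if $S = S'$, equivalently if and only if the indices $j\in[\ga+2,\lfloor N/2\rfloor]$ with $\ep_j<\ga$ are precisely those with $D_j=0$. I expect the combinatorial rigidity step in the second paragraph to be the principal obstacle, as it requires exploiting conditions (3)--(5) in concert to rule out asymmetric valleys; once in hand, the torsion symmetries and Lemma~\ref{lem:coeff_prof} assemble into the equivalence formally.
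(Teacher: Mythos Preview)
Your approach mirrors the paper's exactly: reduce to the window $[\ga+1,\lfloor N/2\rfloor]$ via torsion symmetry and periodicity, show that on this window the sequence is determined by the set $\{j:\ep_j<\ga\}$ (this is precisely the V-shape rigidity recorded in the discussion immediately following Lemma~\ref{lem:coeff_prof}), and then invoke Lemma~\ref{lem:coeff_prof} to identify that set with $\{j:D_j=0\}$. The paper compresses these same three moves into two sentences.

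There is one small slip in your second paragraph. Subadditivity together with $\ep_1=1$ only gives $\ep_{j+1}\le\ep_j+1$; it does not give the two-sided bound $|\ep_j-\ep_{j+1}|\le1$. For the actual profile the reverse inequality is Corollary~\ref{cor:nece2}, and for the candidate sequence it must be taken as a hypothesis (it is condition~(3) of Conjecture~\ref{conj:profile}, and is what the paper implicitly intends among ``the five necessary conditions''). Without it your V-shape claim fails for abstract sequences: e.g.\ $(\ga,\ga-3,\ga-2,\ga-1,\ga)$ satisfies subadditivity, the no-plateau rule, and the no-local-max rule on a length-three below-$\ga$ run, yet is not the symmetric V. Once you assume the two-sided step bound directly, your argument goes through and coincides with the paper's.
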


\begin{proof}
From our earlier discussion, it is clear that the sequence $(\ep_{\ga+1}\hh \ep_{\lfloor\frac{N}{2}\rfloor})$ determines the full multiplication profile in this particular case. The desired conclusion now follows from Proposition~\ref{prop:coeff_prof} and the discussion following it.
\end{proof}
We are now ready to give a complete list of conditions that characterize those power series $\sum a_i x^i$ that give rise to a two-pointed hyperelliptic curve $(B,p,q)$ for which the order of $[p-q]$ is precisely $N$ and $m([p-q])=(\ep_j)_{j=1}^{\infty}$. A key observation is that for every fixed value of $N$, finding a power series that satisfies these conditions is a {\it finitely-determined} problem. Namely, every coefficient $a_j$ with $j \geq N$ is uniquely determined by $a_0\hh a_{N-1}$; so the set of admissible power series is a sublocus of the projective space $\pp^{N-1}$ in these coordinates.

\medskip
\noindent \textbf{Conditions for admissibility:}

\begin{itemize}
\item (Polynomiality and degree): For every $k\ge 2\ga+2$, we have $\sum_{i+j=k} a_ia_j=0$, while $\sum_{i+j=2\ga+1} a_ia_j\neq0$.

\item (Separability):
The Sylvester determinant $S(f)(a_0\hh a_{N-1})$ is nonzero (see Notation~\ref{nt:sylv}), where $f(x)=\sum_{k=0}^{2\ga+1}(\sum_{i+j=k}a_ia_j)x^k$.

\item (Torsionness) Whenever $N$ is even (resp. odd), the Hankel matrix
\begin{equation}\label{eq:hankel_non-maximality}
M_N:=\begin{bmatrix}
a_{\ga+2}&a_{\ga+3}&\hdots& a_{\frac{N}{2}+1}\\
a_{\ga+3}&a_{\ga+4}&\hdots& a_{\frac{N}{2}+2}\\
\vdots&\vdots&& \vdots\\
a_{\frac{N}{2}+\ga}&a_{\frac{N}{2}+\ga+1}&\hdots& a_{N-1}
\end{bmatrix}\text{ (resp., }
M_N:=\begin{bmatrix}
a_{\ga+1}&a_{\ga+2}&\hdots& a_{\frac{N+1}{2}}\\
a_{\ga+2}&a_{\ga+3}&\hdots& a_{\frac{N+3}{2}}\\
\vdots&\vdots&& \vdots\\
a_{\frac{N-1}{2}+\ga}&a_{\frac{N+1}{2}+\ga}&\hdots& a_{N-1}
\end{bmatrix}\text{)}
\end{equation}
has less-than-maximal rank. 
\item (Profile-specific conditions): See Proposition \ref{prop:multiplication_cond}.
\end{itemize}

\begin{defn}
Given $N\ge 2\ga+3$, let $\ti{T}_N$ denote the closed subset of $\pp^{N-1}$ given by the intersection of the closed subset defined by the homogeneous equations $\sum_{i+j=k}x_ix_j=0$ for $k=2\ga+2\hh N-1$ with the closed subset given by the vanishing of maximal minors of the $(\frac{N}{2}-1)\times(\frac{N}{2}-\ga)$ Hankel matrix $M_N$. 
Let $T_N$ denote the locally-closed subset given by the intersection of $\ti{T}_N$ with the open loci $(\sum_{i+j=2\ga+1}x_ix_j\neq 0)$ and 
$(S(f)(x_0\hh x_{N-1})\neq 0)$ (see Notation \ref{nt:sylv}), where $f(x)=\sum_{k=0}^{2\ga+1}(\sum_{i+j=k}x_ix_j)x^k$.
\end{defn}

We think of $T_N$ as a parameter space for square roots of separable polynomials $f(x)$ of degree $2\ga+1$ whose corresponding hyperelliptic curves admit $N$-torsion points $p=(0,\sqrt{f(0)})$. Note that the definition of $f$ guarantees that $a_0\neq 0$. 



\subsubsection{Conjectural classification of multiplication profiles for torsion orders $N \leq 2\ga+6$}


According to \cite[Proposition 4.3]{eisenbud1988linear}, whenever $N$ is even (resp. odd) the maximal minors of $M_N$ cut out a closed subscheme of $\PP^{N-1}$ that is a cone over 
an integral, normal closed subscheme $W$ of $\PP^{N-\ga-2}_{(x_{\ga+2}:\hdots:x_{N-1})}$ (resp. $\PP^{N-\ga-1}_{(x_{\ga+1}:\hdots:x_{N-1})}$) of codimension $\ga$. In particular, the ideal $I$ generated by the maximal minors of $M_N$ is a homogeneous prime ideal generated by some homogeneous elements of degree $\lfloor\frac{N+1}{2}\rfloor-\ga$. 

\medskip
In light of \cite[Cor. 2.8]{zarhin2019division}, we may assume that $N \geq 2\ga+1$ without loss of generality. Moreover, as explained in Examples~\ref{ex:(2ga+1)-torsion}, \ref{ex:torsion1}, and \ref{ex:(2ga+5)-torsion} below, whenever $N \in \{2\ga+1,2\ga+2,2\ga+3,2\ga+5\}$ a single multiplication profile is possible; so the interesting cases are those for which $N=2\ga+4$ or $N=2\ga+6$. On the other hand, Example~\ref{ex:torsion2} (resp., Example~\ref{ex:torsion4}) establishes that there are two (resp., three) possible multiplication profiles whenever $N=2\ga+4$ (resp., $N=2\ga+6$). Experimental computer evidence suggests that each of these profiles determines a distinguished component of the corresponding parameter space $T_N$.

\begin{conj}\label{conj:torsion-order_profiles}
Suppose that either $N=2\ga+4$ or $N=2\ga+6$. Each irreducible component $V_i$ of the parameter space $\ti{T}_N$ is cut out by a prime ideal $I_i$ generated by the \emph{polynomiality quadrics} $\sum_{i=0}^{N-1} x_i x_{k-i}$, $k=2\ga+2,\dots,N$ and maximal minors of the Hankel matrix $M_N$ in \eqref{eq:hankel_non-maximality}, together with a distinguished set of auxiliary generators. More precisely, we have
\[
\ti{T}_{2\ga+4}= V_1 \cup V_2
\]
where $I_1=I(V_1)$ is generated by the polynomiality quadrics, $2 \times 2$ Hankel minors, and the quadrics $2x_{\lfloor \fr{\ga+1+j}{2}\rfloor} x_{\lfloor \fr{\ga+1+j}{2}\rfloor+1} -x_{\ga+1}x_j$, $j=\ga+3,\dots, 2\ga+3$; while $I_2=I(V_2)$ is generated by the polynomiality quadrics, maximal Hankel minors, and $x_{\ga+2}$. Similarly, we have a decomposition
\[
\ti{T}_{2\ga+6}= V_1 \cup V_2 \cup V_3.
\]
This time, $I_1=I(V_1)$ is generated by the polynomiality quadrics, $3 \times 3$ Hankel minors, and the cubics 
\[
-2x_{\ga+3} D^{(1,2)}_{\ell_1,\ell_2}+ x_{\ga+2} D^{(1,3)}_{\ell_1,\ell_2}, (\ell_1,\ell_2) \in \binom{[\ga+2]}{2}
\]
in which $D^{(j_1,j_2)}_{\ell_1,\ell_2}$ denotes the $2 \times 2$ determinant associated to rows $(\ell_1,\ell_2)$ and columns $(j_1,j_2)$ of $M_{2\ga+6}$. The ideal $I_2=I(V_2)$ is generated by the polynomiality quadrics, $3 \times 3$ Hankel minors, the cubics
\[
2x_{\ga+3}D^{(1,2)}_{\ell_1,\ell_2}+ x_{\ga+1}D^{(2,3)}_{\ell_1,\ell_2}
\]
in which $(\ell_1,\ell_2) \in \binom{[\ga+2]}{2}$ indexes a pair of rows with $\ell_i \geq 2$, $i=1,2$, the cubics 
\[
2x_{\ga+3}(x_{\ga+1}x_{j+2}-x_{\ga+3}x_j)-x_{\ga+1}(x_{\ga+4} x_{j+1}+ x_{\ga+3} x_{j+2}), j=\ga+3,\dots,2\ga+3
\]
and $x_{\ga+2}$. The ideal $I_3=I(V_3)$ is generated by the polynomiality quadrics, $3 \times 3$ Hankel minors, and $x_j$, $j=\ga+1,\ga+2$.
\end{conj}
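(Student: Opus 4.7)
The plan is to establish the decomposition by combining a stratification of $R_N$ by multiplication profile with direct ideal-theoretic verification of the proposed generators. First, I would use Proposition~\ref{prop:multiplication_cond} to decompose $R_N$ set-theoretically according to the multiplication profile of $[p-q]$. For $N=2\ga+4$, the profile is determined by $\ep_{\ga+2}\in\{\ga-1,\ga\}$, yielding exactly two strata; for $N=2\ga+6$, the admissible profiles correspond to the three possible patterns of the pair $(\ep_{\ga+2},\ep_{\ga+3})$ permitted by the restrictions in Corollary~\ref{cor:nece2} and Lemma~\ref{lem:nece3}. By Lemma~\ref{lem:coeff_prof}, each profile is encoded by the vanishing of specific initial coefficients ($a_{\ga+1}$ for $\ep_{\ga+2}<\ga$, $a_{\ga+2}$ for $\ep_{\ga+3}<\ga$), which matches each stratum to a candidate component $V_i$.

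Second, I would verify the containment $V_i\subseteq \tilde R_N$ for each $i$ by checking that the generators of $I(\tilde R_N)$ — the polynomiality quadrics and the maximal Hankel minors of $M_N$ — all lie in $I_i$. For $V_1$ in both cases this holds by construction. For the remaining components, the linear generators ($x_{\ga+2}$, or $x_{\ga+1}$ and $x_{\ga+2}$) together with polynomiality allow one to reduce the Hankel minors to combinations of polynomiality quadrics via elementary row/column operations on $M_N$. The cubic generators appearing in $I_1$ and $I_2$ for $\tilde R_{2\ga+6}$ can be recognized as Laplace-type expansions of $3\times 3$ submatrices of $M_{2\ga+6}$ along rows involving $x_{\ga+2}$ and $x_{\ga+3}$, reducing verification to standard linear algebra modulo the polynomiality quadrics.

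Third, I would prove irreducibility of each $V_i$ by parameterizing it explicitly. Those components cut out in part by linear generators reduce, after elimination of the vanishing coordinates, to classical (Hankel) determinantal or polynomiality loci whose primality is known; irreducibility then follows in those cases. For the remaining components, I would use the Flynn-inspired constructions developed in Section~\ref{sec:explicit_realizability} and illustrated in Example~\ref{ex:torsion_1}, which produce explicit rational maps from irreducible bases of the expected dimension into the profile strata. Combined with a dimension count matching the expected codimension of $\tilde R_N$ in $\mb{P}^{N-1}$, this would show that each $V_i$ is irreducible and equals the closure of its corresponding profile stratum.

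The main obstacle will be proving \emph{primality} of each ideal $I_i$, as opposed to mere set-theoretic equality $V(I_i)=\overline{(\text{stratum }i)}$. The mixed nature of the generators (linear forms, polynomiality quadrics, maximal Hankel minors, and the auxiliary cubics for $N=2\ga+6$) places these ideals outside the scope of standard primality theorems for classical determinantal rings. I would attempt to identify each $V_i$ with a twisted sparse-resultant locus or a cone over a classical Hankel determinantal variety whose coordinate ring is understood via the work of Eisenbud and of De~Concini--Procesi; failing a structural argument, explicit Gr\"obner-basis verification in the first few cases $\ga=2,3,4$ should both confirm the conjecture and suggest the combinatorial organization needed for a uniform proof.
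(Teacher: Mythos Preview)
The statement you are attempting to prove is labeled as a \emph{conjecture} in the paper, and the paper offers no proof; the authors state only that it is supported by ``experimental computer evidence'' and leave it open. There is therefore no argument in the paper against which to compare your proposal.

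Your outline is a reasonable research plan and correctly isolates primality of the $I_i$ as the central difficulty, but several steps are more optimistic than the tools you invoke can support. First, the profile stratification of Proposition~\ref{prop:multiplication_cond} is only defined on the open locus $R_N$; passing to the closed scheme $\ti R_N$ requires ruling out extra components supported on the boundary $\ti R_N\setminus R_N$ (where the degree or separability conditions fail and no multiplication profile is defined), and nothing in your plan addresses this. Second, the constructions of Section~\ref{sec:explicit_realizability} and Example~\ref{ex:torsion_1} produce only isolated points or one-parameter families in each profile stratum, not dominant rational maps from an irreducible base of the correct dimension; a dimension count does not give irreducibility without such a map. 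Third, the auxiliary cubics in $I_1$ and $I_2$ for $N=2\ga+6$ are \emph{not} Laplace expansions of $3\times 3$ minors of $M_{2\ga+6}$: a Laplace expansion along a row has three terms with unit coefficients, whereas the listed cubics have two terms with coefficients $\pm 2x_{\ga+3}$ and $x_{\ga+2}$ (or $x_{\ga+1}$). They are genuinely new relations whose scheme-theoretic effect must be analyzed directly, not reduced to the Hankel minors already present. In sum, your proposal recovers the heuristics that presumably led to the conjecture but, as you yourself note in the final paragraph, does not close the gap to a proof---which is exactly the status the paper assigns to the statement.
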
 

\begin{rem}
Note that Conjecture~\ref{conj:torsion-order_profiles} is coherent with the numerology of Examples~\ref{ex:torsion2} and \ref{ex:torsion4}, respectively. Indeed, the order of the components we have chosen is in increasing order of speciality here, as it is there. The conjecture predicts that a general point of $V_i$ corresponds to an $N$-torsion point whose multiplication profile is (uniquely determined by) the $i$-th sequence listed in the corresponding example.
\end{rem}

\section{Explicit realizations and codimension estimates}\label{sec:explicit_realizability}
\begin{defn}\label{defn:potential}
    Fix positive integers $N$ and $\ga$. Let $(\ep_j)_{j=1}^{N-1}$ be any sequence of non-negative integers. We say it is a \textbf{potential multiplication profile} of an $N$-torsion point on a hyperelliptic curve of genus $\ga$ if it is the subsequence of an infinite sequence $(\ep_j)_{j=1}^{\infty}$ satisfying all conditions in Proposition \ref{prop:profile_conditions} such that $\ep_N=0$. Whenever there exists 
    a hyperelliptic curve of genus $\ga$ with an $N$-torsion point whose multiplication profile is $(\ep_j)_{j=1}^{\infty}$, we say the potential multiplication profile is \textbf{realizable}.
\end{defn}
In this section, we classify all potential multiplication profiles up to $N=2\ga+9$ for arbitrary $\ga$ and show that all but six of them (in orders $2\ga+3$, $2\ga+5$, $2\ga+8$, and $2\ga+9$ resp.) are actually realizable as multiplication profiles of a torsion point.  
\begin{ex}\label{ex:(2ga+1)-torsion}
Let $x=[p-q]$ be a $2$-torsion (resp. $(2\ga+1)$-torsion,  $(2\ga+3)$-torsion) point. Then $m(x)$ is determined by $m(x)_2=(1,0)$ (resp. $m(x)_{2\ga+1}=(1,2\hh\ga,\ga\hh 1,0)$, $m(x)_{2\ga+3}=(1,2\hh\ga,\ga,\ga,\ga\hh 1,0)$). Since order-$2$ points are just Weierstrass points, and order-$(2\ga+1)$ points are known to exist on certain hyperelliptic curves of genus $\ga$ \cite{bekker2020torsion}, the first two potential multiplication profiles are realizable. 
\end{ex}

\begin{ex}\label{ex:torsion1}
Here we compute the multiplication profiles of those torsion points constructed in Example \ref{ex:torsion_1}. To do so, we start by reducing the Mumford presentation $(x^{\ga+s}, v(x)=\sqrt{\frac{-1}{2}}(-x^{\ga-s+1}+1))$ using Cantor's algorithm;
we obtain $\frac{f(x)-v^2(x)}{x^{\ga+s}}=c(x^{\ga-s+1}-1)$.
It follows that $(\ga+s)\cd (p-q)\sim \sum ((\xi_j,0)-q)$, where $\xi_j$ runs through all $(\ga-s+1)$-th roots of unity. In particular, $\ep_{\ga+s}=\ga-s+1$ and the multiplication profile of $p-q$ is determined by 
\[m(p-q)_{2\ga+2s}=(1\hh\ga,\ga,\ga-1\hh \ga-s+1\hh\ga,\ga\hh 1,0).\] 
Note that any $(2\ga+2)$-torsion point must have the latter profile, with $s=1$. Thus, the potential multiplication profile of a $(2\ga+2)$-torsion point is realizable. 
\end{ex}
\noindent The following observation is obvious, yet useful in practice:
\begin{lem}
For every even number $N\ge 2\ga+1$, $[p-q]$ has order $N$ if and only if $\frac{N}{2}(p-q)$ has order 2. 
\end{lem}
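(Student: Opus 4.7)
Setting $x := [p-q] \in \Jac(B)$ and writing $d := \ord(x)$, I would prove both directions via elementary order arithmetic supplemented by Zarhin's small-torsion nonexistence.

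For the forward direction, $d = N$ gives $Nx = 0$, hence $2 \cdot \tfrac{N}{2}x = 0$, so $\ord(\tfrac{N}{2}x)$ divides $2$; minimality of $d$ rules out $\tfrac{N}{2}x = 0$, leaving $\ord(\tfrac{N}{2}x) = 2$.

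For the converse, the hypothesis $\ord(\tfrac{N}{2}x) = 2$ translates into the two divisibility conditions $d \mid N$ and $d \nmid \tfrac{N}{2}$, equivalently $v_2(d) = v_2(N)$. Writing $N = dk$, this forces $k$ to be odd; the goal is to conclude $k = 1$. If instead $k \geq 3$, then $d$ is an \emph{even} divisor of $N$ satisfying $d \leq N/3$. Zarhin's Corollary~2.8 in \cite{zarhin2019division} rules out torsion orders $n$ with $3 \leq n \leq 2\gamma$, and the standing assumption that $p$ is not a Weierstrass point of $B$ rules out $d = 2$; combining these forces $d \geq 2\gamma+2$, and consequently $N \geq 6\gamma+6$, a contradiction in the range of torsion orders $N \leq 2\gamma+9$ relevant to Section~\ref{sec:explicit_realizability}.

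The main obstacle is the case $d = 2$: when $\tfrac{N}{2}$ is odd, $\tfrac{N}{2}x = x$ is already of order $2$, so the converse genuinely fails without some restriction on $p$ (for instance, for $N \equiv 2 \pmod 4$ and $p$ a Weierstrass point distinct from $q$). Verifying the lemma as applied therefore reduces to confirming that the explicit point classes under consideration are not themselves $2$-torsion, which holds automatically once $p$ is assumed non-Weierstrass.
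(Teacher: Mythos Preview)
Your forward direction is correct and matches what the paper presumably has in mind (the paper offers no proof, calling the lemma ``obvious'').

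Your analysis of the converse is also correct, and in fact you have identified a genuine imprecision in the paper's statement: as written, the converse is false. For instance, with $\ga=2$ one can take a point class $x=[p-q]$ of order $6$ (such classes exist by Example~\ref{ex:torsion_1} with $s=1$); then for $N=18$ one has $\tfrac{N}{2}x=9x=3x$, which has order $2$, yet $\ord(x)=6\neq 18$. So the lemma needs the implicit hypotheses you isolate: $p$ non-Weierstrass (to exclude $d=2$) and $N$ not too large.

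One correction to your write-up: the bound you state, $N\le 2\ga+9$, is too restrictive and does not cover the paper's own applications. Examples~\ref{ex:torsion2} and~\ref{ex:torsion3} invoke the lemma for $N=4\ga+4-2s$ and $N=4\ga+2-2s$ with $s$ as small as $1$, so $N$ reaches $4\ga+2$. Fortunately your argument already proves more than you claim: since $d$ is even with $v_2(d)=v_2(N)$, Zarhin forces $d\ge 2\ga+2$ once $d>2$, and then $k\ge 3$ gives $N\ge 3(2\ga+2)=6\ga+6$. Hence the converse holds for all even $N$ with $2\ga+1\le N\le 6\ga+4$ and $p$ non-Weierstrass, which comfortably covers every use in Section~\ref{sec:explicit_realizability}. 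You should state this sharper bound rather than $2\ga+9$.
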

On the other hand, elements of $\Jac(B)[2]$ have $q$-reduced presentations of the form $\sum_{i=1}^s q_i-sq$, where $q_i$ are pairwise-distinct Weierstrass points distinct from $q$. Finally, up to linear change of coordinates, we may always assume that the $x$-coordinate of any fixed torsion point is 0.  Together these three observations often enable us to simplify the equations involved in determining the multiplication profiles of torsion points of even order. 

\begin{ex}\label{ex:torsion2}
Fix $s,r\ge 0$ such that $s+r=\ga+1$. For a general choice of $(a_1\hh a_s,b_1\hh b_r)$,  Lemma~\ref{lem:poly_sep} implies that $x^{2\ga+2-s}-\prod_{i=1}^{s}(x-a_i)\cd\prod_{j=1}^r (x-b_j)^2$ is separable of degree $2\ga+1-s$.\footnote{The genericity assumption guarantees that $e_1(a_\hh a_s,b_1,b_1\hh b_r,b_r)\neq 0$, and the conclusion on the degree of the difference follows.} Letting $c=(\sum a_i+2\sum b_j)^{-1}$, it follows that
\[f(x)=c\cd\prod_{i=1}^{s}(x-a_i)[x^{2\ga+2-s}-\prod_{i=1}^{s}(x-a_i)\cd\prod_{j=1}^r(x-b_j)^2]\] 
is a monic, separable polynomial of degree $2\ga+1$. 
Now let 
\[
v(x)=\sqrt{-c}\prod_{i=1}^{s}(x-a_i)\cd\prod_{j=1}^r(x-b_j), \text{ so that } 
f(x)-v^2(x)=c\cd\prod_{i=1}^{s}(x-a_i)\cd x^{2\ga+2-s}. 
\]
Letting $p=(0,v(0))$, we see that $(2\ga+2-s)(p-q)\sim \sum_{i=1}^{s}((a_i,0)-q)$ is a 2-torsion point of the Jacobian of $B: y^2=f(x)$.

\medskip
When $s=\ga$, it follows that $x=p-q$ is of order $2\ga+4$ and its multiplication profile is determined by $m_{2\ga+4}(x)=(1,2\hh\ga,\ga,\ga,\ga,\ga,\ga-1\hh1,0)$. This profile and that of Example~\ref{ex:torsion1} (with $s=2$) are the only two possible profiles for a $(2\ga+4)$-torsion element in $\Theta^0_1$ on some hyperelliptic Jacobian. 
In light of Example \ref{ex:torsion_1}, we conclude that both potential multiplication profiles of a $(2\ga+4)$-torsion point are realizable. 

\medskip
More generally, for every $s=1\hh \ga$, $x=p-q$ is of order $4\ga+4-2s$ and its multiplication profile is determined by \[m_{4\ga+4-2s}(x)=(1,2\hh\ga,\ga,\ga,\ga-1\hh s,s+1\hh \ga,\ga,\ga\hh1,0).\]

\end{ex}

\begin{ex}\label{ex:(2ga+5)-torsion}
Say that $x=p-q$ is a $(2\ga+5)$-torsion point; then $m(x)$ is determined by 
\[m(x)_{2\ga+5}=(1\hh\ga,\ga,\ga,\ga,\ga,\ga \hh 0).\]
Indeed, according to Example \ref{ex1}, every $\ep_j$ is already determined, except for $\ep_{\ga+2}=\ep_{\ga+3}$. On the other hand, notice that $2\cd(\ga+2)\cd x=[\iota(p)-q]\in\Theta^0_1$; it follows from \cite[Theorem 2.5] {zarhin2019division} that $\ep_{\ga+2}=\ep_{\ga+3}=\ga$. (Alternatively, use condition (4) in Conjecture~\ref{conj:profile}.)
\end{ex}

\begin{ex}\label{ex:torsion3}
Fix $1\le s\le \ga-1$, and let $N=2\ga+1-s$. By Lemma \ref{lem:sep2}, $g(x)=cx^N+\prod_{i=1}^s(x-a_i)\cd(x-b)^2$ is separable of degree $N$, for general choices of $a_1\hh a_s,b,c$. Let $a_{s+1}\hh a_{2\ga+1}$ denote the roots of $g$; then $a_1\hh a_{2\ga+1}$ are all distinct. Let 
\[
f(x)=\prod_{i=1}^{2\ga+1}(x-a_i) \text{ and }v(x)=\sqrt{c^{-1}}\prod_{i=1}^{s}(x-a_i)(x-b).
\]

\noindent Set $p=(0,v(0))$. Reducing $(x^N,v(x))$ one time using Cantor's algorithm, we conclude that $N(p-q)\sim \sum_{i=1}^s((a_i,0)-q)$. Consequently, $x=p-q$ has order $4\ga+2-2s$ and $m(x)$ is determined by 
\[m(x)_{4\ga+2-2s}=(1\hh\ga,\ga,\ga-1\hh s,s+1\hh \ga,\ga,\ga-1\hh 0).\]
This is the multiplication profile of a $(4\ga+2-2s)$-torsion point in $\Theta_1$ 
for which $\ep_{2\ga+1-s}$ is minimized.
\end{ex}

\begin{ex}\label{ex:torsion4}
Let $N=2\ga+6$. By symmetry, potential multiplication profiles associated with torsion points of this order are determined by either
\begin{enumerate}
    \item $(\ep_{\ga+2},\ep_{\ga+3})=(\ga,\ga)$;
    \item $(\ep_{\ga+2},\ep_{\ga+3})=(\ga,\ga-1)$; or
    \item $(\ep_{\ga+2},\ep_{\ga+3})=(\ga-1,\ga-2)$.
\end{enumerate}

The second (resp., third) possibility is realized in Ex.~\ref{ex:torsion2} (resp., Ex.~\ref{ex:torsion3}) with $s=\ga-1$ (resp., $s=\ga-2$).
We claim the first possibility is also realizable. Indeed, set $s=\ga+1$ and $\ell=3$; and apply Lemma~\ref{lem:sep3} to obtain a separable polynomial of degree $\ga$ of the form $g(x)=x^{\ga+3}-\prod_{i=1}^s(x-a_i)\cd(x-b)^2:=c\cd \prod_{i=\ga+2}^{2\ga+1}(x-a_i)$. The generality of the $a_i$ together with the fundamental theorem of symmetric polynomials
guarantees that the constants $a_1\hh a_{\ga+1},b$ are all non-zero; and it's easy to see that $a_1\hh a_{2\ga+1}$ are pairwise distinct, provided that $a_1\hh a_{\ga+1}$ are. In the case at hand, to ensure that the $a_i$, $i=1,\dots,\ga+1$ are pairwise distinct, we use the fact that $4e_2(x_1\hh x_{\ga+3})-3e_1^2(x_1\hh x_{\ga+3})=0$ admits a solution $(a_1\hh a_{\ga+3})$ whose entries are pairwise distinct. Setting $b=-\frac{1}{2}(\sum a_i)$, we obtain $e_1(a_1\hh a_{\ga+3},b,b)=e_2(a_1\hh a_{\ga+3},b,b)=0$.  

\medskip
Now let $f(x)=\prod_{i=1}^{2\ga+1}(x-a_i)$, $u_0(x)=x^{\ga+3}$, $v_0(x)=\sqrt{-c}\prod_{i=1}^{\ga+1}(x-a_i)\cd(x-b)$. 
Apply the reduction algorithm once to the pair $(u_0,v_0)$, we obtain $u_1(x)=\prod_{i=1}^{\ga+1}(x-a_i)$. In particular, we have $(\ga+3)\cd(p-q)\sim \sum_{i=1}^{\ga+1}(q_i-q)$, where $p=(0,v(0))$ and $q_i=(a_i,0)$. We conclude that $x=p-q$ has order $2\ga+6$ and 
$\dv(y)=\sum_{i=1}^{2\ga+1}(q_i-q)$; so $m(x)$ matches the first candidate profile on our list.
\end{ex}

\begin{ex}\label{ex:odd1}
Let $N=2\ga+7$. By symmetry, potential multiplication profiles are indexed by
\begin{enumerate}
    \item $(\ep_{\ga+2},\ep_{\ga+3})=(\ga,\ga)$; \text{ and }
    \item $(\ep_{\ga+2},\ep_{\ga+3})=(\ga-1,\ga)$.
\end{enumerate}
Both profiles are realized in Appendix \ref{sec:flynn}. For the first (resp., second), we require that $\ga\ge 3$ (resp., $\ga\ge 4$).   
\end{ex}

\begin{ex}\label{ex:2ga+8}
When $N=2\ga+8$, potential profiles are indexed by
\begin{enumerate}
    \item $(\ep_{\ga+2},\ep_{\ga+3},\ep_{\ga+4})=(\ga,\ga,\ga)$;
    \item $(\ep_{\ga+2},\ep_{\ga+3},\ep_{\ga+4})=(\ga,\ga-1,\ga-2)$;
    \item $(\ep_{\ga+2},\ep_{\ga+3},\ep_{\ga+4})=(\ga,\ga,\ga-1)$;
    \item $(\ep_{\ga+2},\ep_{\ga+3},\ep_{\ga+4})=(\ga-1,\ga-2,\ga-3)$; \text{ and}
    \item $(\ep_{\ga+2},\ep_{\ga+3},\ep_{\ga+4})=(\ga-1,\ga,\ga)$.
\end{enumerate}

Of these, the second is realized in Ex.~\ref{ex:torsion2}, setting $s=\ga-2$. The third may be realized using a similar construction applying Lemma \ref{lem:sep3} as used to realize the first listed profile in Ex.~\ref{ex:torsion4}. 
The fourth is realized in Ex.~\ref{ex:torsion3}, setting $s=\ga-3$.

\medskip
According to Proposition~\ref{prop:coeff_prof}, to realize the first profile we need to find a power series $\sum a_ix^i$ in $R_{2\ga+8}$ for which $a_{\ga+1}\neq 0$, $a_{\ga+2}\neq 0$, and $a_{\ga+1}a_{\ga+3}-a_{\ga+2}^2\neq 0$. 
Similarly, in order to realize the fifth profile, the required conditions are $a_{\ga+1}=0$ and $a_{\ga+2}\neq 0$. 
\end{ex}

\begin{ex}\label{ex:2ga+9}
When $N=2\ga+9$, candidate profiles are indexed by
\begin{enumerate}
    \item $(\ep_{\ga+2},\ep_{\ga+3},\ep_{\ga+4})=(\ga,\ga,\ga)$;
    \item $(\ep_{\ga+2},\ep_{\ga+3},\ep_{\ga+4})=(\ga,\ga-1,\ga)$; \text{ and}
    \item $(\ep_{\ga+2},\ep_{\ga+3},\ep_{\ga+4})=(\ga-1,\ga,\ga)$.
\end{enumerate}  
The first profile is realized in Appendix~\ref{sec:flynn} for every $\ga\ge 4$.
\end{ex}

For the convenience of the reader, we summarize the above discussion and its relation to the question of realizability of numerical semigroups in a single statement:
\begin{thm}\label{thm:torsion}
 The number $p(N)$ of potential multiplication profiles of an $N$-torsion point on a genus-$\ga$ hyperelliptic curve for $N\le 2\ga+9$ are as follows:

 \medskip
 \begin{tabular}{c|c|c|c|c|c|c|c|c|c|c}
    $N$  & 2 &$2\ga+1$ &$2\ga+2$ &$2\ga+3$ &$2\ga+4$&$2\ga+5$ &$2\ga+6$&$2\ga+7$ &$2\ga+8$&$2\ga+9$ \\
    \hline
    $p(N)$ & 1 & 1&1&1&2&1&3&2&5&3 
 \end{tabular}

\medskip
Whenever $N \in\{2,2\ga+1,2\ga+2,2\ga+4,2\ga+6\}$ or $N=2\ga+7$ with $\ga\ge 4$, all potential multiplication profiles are realizable. Whenever $N=2\ga+8$, three out of five potential multiplication profiles are realizable. When $N=2\ga+9$ and $\ga\ge 4$, one out of three potential multiplication profiles are realizable.

\medskip
Whenever the $\ep$-vector (see Definition \ref{defn:ep_vec}) of an $(N,\ga)$-hyperelliptic semigroup coincides with one of the realizable multiplication profiles, that semigroup is Weierstrass-realizable by a degree-$N$ cyclic cover of a genus-$\ga$ hyperelliptic curve.  
\end{thm}

\begin{ex}\label{ex:fibonacci_numerology}
The classification of indexing sequences for potential multiplication profiles associated with torsion points of arbitrary even order $2\ga+2k$ is similar to that of Example~\ref{ex:2ga+8}; an upshot is that the number $N_k$ of candidate profiles associated with torsion order $2\ga+2k$ is the $(k+1)$-th Fibonacci number for every $k \geq 1$. Indeed, we claim that
\begin{equation}\label{eq:fibonacci_recursion}
N_k= \sum_{i=1}^{k-2} N_{i}+ 2
\end{equation}
for every positive integer $k \geq 3$. To see this, note that indexing sequences \\$(\ep_{\ga+2},\ep_{\ga+3}, \dots, \ep_{\ga+k})$ are of four possible types: $(\ga, \ga, \dots)$, $(\ga, \ga-1, \dots)$, $(\ga-1,\ga,\dots)$, and $(\ga-1,\ga-2,\dots)$.
In the first case, the remaining $(k-3)$ entries comprise an arbitrary profile associated with a torsion order of index $2(\ga+k-2)$, as manifested by the instance of $N_{k-2}$ on the right-hand side of \eqref{eq:fibonacci_recursion}. Similarly, in the third case, $\ep_{\ga+4}=\ga$ is forced, and the remaining $(k-4)$ entries comprise an arbitrary profile associated with a torsion order of index $2(\ga+k-3)$; these account for the instance of $N_{k-3}$. In the second and fourth cases, we have the following dichotomy: either the sequence is strictly decreasing (here there are precisely two possibilities, which accounts for the instance of 2 in \eqref{eq:fibonacci_recursion}) or it eventually reverses course and starts to increase. In the latter situation, the sequence starts increasing after $j$ steps, where $2 \leq j \leq \lfloor \frac{k}{2}\rfloor-1$, as symmetry requires that this first ascent end in $\ga, \ga$; 
the corresponding contribution to $N_k$ is $N_{k-2j}$ (resp., $N_{k-1-2j}$) in the second (resp., fourth) case. By varying the choice of $j$ we obtain 
\[
N_k= N_{k-2}+ \sum_{j=2}^{\lfloor \frac{k}{2} \rfloor-1} N_{k-2j}+ N_{k-3}+  \sum_{j=2}^{\lfloor \frac{k}{2} \rfloor-1} N_{k-1-2j} +2
\]
which is equivalent to \eqref{eq:fibonacci_recursion}.

\medskip
On the other hand, the number $\wt{N}_k$ of potential multiplication profiles associated with torsion points of arbitrary {\it odd} order $2\ga+2k+1$, $k \geq 1$ is equal to the $k$-th Fibonacci number. Indeed, an analysis similar to that of the previous paragraph yields
$\wt{N}_k= \sum_{i=1}^{k-2} \wt{N}_{i}+1$ for every $k \geq 3$.

\medskip
It is instructive to compare those Weierstrass semigroups derived from multiplication profiles of $N$-torsion point classes 
against the Carvalho--Torres semigroup ${\rm U}_0$ of Example~\ref{ex:sanity}, whose associated multiplication profile is the zero vector. To ensure that $d=d_{g,\ga,N}=\frac{(2g-2)-N(2\ga-2)}{N(N-1)}$ is an integer, we postulate that $g=\binom{N}{2}t+N(\ga-1)+1$, with $t=d \in \mb{Z}$; for reasons of ordering, we also assume $t \geq 2\ga+3$, and that $N$ and $\ga$ are even and nonzero. In every case, the nonzero elements $e_j$, $j=1,\dots,2N-1$ of the standard basis modulo $2N$ of the associated $2N$-semigroup are explicitly determined by the equations
\begin{equation}\label{eq:staircase_equations}
(Nt-1)j+ N \ep_j= \min\{e_{N-j},e_{2N-j}\} \text{ and } e_{N-j}+ e_{2N-j}= 2j(Nt-1)+ (2\ga+1)N
\end{equation}
for every $j=1,\dots,N-1$. For simplicity, we focus on the {\it staircase} of type $(\ep_1,\dots,\ep_N)=(1,2,\dots,\ga-1,\ga,\dots,\ga,\ga-1,\dots,2,1,0)$.  
From the minimal presentations and standard bases of the Carvalho--Torres and staircase semigroups, we may extract their $t$-asymptotic {\it effective weights} ${\rm ewt}$ in the sense of \cite[Def. 1.1]{Pfl}. We do this explicitly for the Carvalho--Torres semigroup ${\rm U}_0$; we will carry out the (significantly more complicated, yet similar) effect weight calculation for ${\rm S}_N$ elsewhere.

\begin{prop}\label{prop:CT_presentation_and_ewt}
Assume that $t \geq 2\ga+3$ is an odd integer, and that $\gamma$ is a strictly positive integer. The associated Carvalho--Torres semigroup ${\rm U}_0= {\rm U}_0(t)$ determined by the equations \eqref{eq:staircase_equations} has minimal presentation $\langle 2,2\ga+1 \rangle N+\langle Nt-1 \rangle$ and effective weight
${\rm ewt}({\rm U}_0)=(3\binom{N}{2}-(N-1))t+ N \ga- 6(N-1)$.
\end{prop}

\begin{proof}
\medskip
From the equations \eqref{eq:staircase_equations}, it is clear that ${\rm U}_0$ is minimally generated by $2N$, $(2\ga+1)N$, and $Nt-1$, and that its standard basis elements $e_i, i=0,\dots,2N-1$ are given by $e_0=2N, e_N=(2\ga+1)N$ and $\{e_{N-j},e_{2N-j}\}=\{(Nt-1)j, (Nt-1)j+(2\ga+1)N\}$ for every $j=1,\dots,N-1$. It follows that the gap set $\mb{N} \setminus {\rm U}_0$ is the union of $\{N, 3N, \dots, (2\ga-1)N\}$ and either
\[
\begin{split}
&\{N-j,3N-j, \dots, (tj-2)N-j\} \sqcup \{2N-j,4N-j,\dots,(tj+2\ga-1)N-j\} \text{ or }\\
&\{2N-j,4N-j,\dots,(tj-2)N-j\} \sqcup \{N-j,3N-j,\dots,(tj+2\ga-1)N-j\}
\end{split}
\]
depending on whether $tj$ is odd or even, i.e., on whether $j$ is odd or even (as $t$ is odd by assumption). We compute the effective weight of ${\rm U}_0$ by summing over every $j$ the subcontributions of $2N$, $2\ga+1$, and $Nt-1$ individually, i.e. for each of these minimal generators we compute the corresponding number of strictly-larger gaps. For example, when $j$ is odd, the subcontribution of $Nt-1$ from the first gap subset $\{ N-j,3N-j,\dots,(tj-2)N-j\}$ is equal to the number of integers $(2k+1)N-j$ strictly greater than $Nt-1$, where $0 \leq k < \frac{tj-1}{2}$. This, in turn, is precisely 
\[
\#\bigg\{k: \frac{t-1}{2}<k<\frac{tj-1}{2}\bigg\}= \frac{tj-1}{2}-\frac{t-1}{2}-\delta_{j>1}
\]
where $\delta_{j>1}= \delta_{j>1}(j)$ is 0 when $j=1$ and 1 otherwise. A straightforward count now yields that the effective weight contribution from gaps congruent to elements of $\{e_0,e_N\}$ is $\ga-1$, while the contributions from gaps congruent to elements $\{e_{N-j},e_{2N-j}\}$ are
\[
\sum_{j \text{ odd}} (t(3j-1)+ \ga-5- \delta_{j>1})
\text{ and} \sum_{j \text{ even}} (t(3j-1)+ \ga-6)
\]
respectively, where each sum is over indices $j \in \{1,\dots,N-1\}$, subject to the given parity restriction. Summing these three contributions yields the desired result.

\end{proof}

On the other hand, the standard basis elements $e_{N-j}$ and $e_{2N-j}$ of the staircase semigroup ${\rm S}_N$ are determined by \eqref{eq:staircase_equations} and the parity of $t$, and are recorded in Table~\ref{table:1}. 

\begin{prop}\label{prop:staircase_minimal_generation}
Assume $\ga$ is a strictly positive integer, $t \geq 2\ga+3$, and $N \geq 3\ga$. The staircase semigroup ${\rm S}_N={\rm S}_N(t)$ described by \eqref{eq:staircase_equations} is minimally generated by
{\small
\begin{equation}\label{min_gens_staircase_semigroup}
\{2N, (2\ga+1)N, N(t+1)-1\} \sqcup \{N(t-1)j-j+ (2\ga+1)N\}_{j=1}^{\ga} \sqcup \{Ntj+ N\ga-j\}_{j=\ga+1}^{N-\ga} \sqcup \{N(t-1)j+N^2-j\}_{j=N-\ga+1}^{N-1}.
\end{equation}
}
\end{prop}

\begin{proof}
\medskip
The hypotheses on $t$ and $\ga$ ensure that the putative minimal generators of ${\rm S}_N$ listed in \eqref{min_gens_staircase_semigroup} are ordered from smallest to largest. It is clear, moreover, that every minimal generator of ${\rm S}_N$ is a standard basis element; and that the smallest three standard basis elements $e_0=2N$, $e_N=(2\ga+1)N$, and $e_{2N-1}=N(t+1)-1$ are minimal generators. Furthermore, no element $N(t-1)j-j+(2\ga+1)N$ with $1 \leq j \leq \ga$ is realizable as a linear combination of $e_0$, $e_N$, and $e_{2N-1}$. Indeed, it were, the fact that $e_i+ e_j \equiv e_{i+j} \text{ (mod }2N)$ for every $i,j \in \{1,\dots,N-1\}$ implies it could be realized as a {\it pairwise} sum involving only two of these; namely, either $e_0+e_1$, $e_0+e_2$, or $e_1+e_2$; but this is absurd. The remaining minimal generators are of the form $\min(e_{N-j},e_{2N-j})$ with $\ga+1 \leq j \leq N-1$. To see that each of these elements is a minimal generator, it suffices to show that none of these is realizable as a pairwise sum of strictly-smaller putative generators. Concretely, this amounts to showing that for every triple of standard basis elements $(e_i,e_j,e_k)$ with $i+j \equiv k \text{ (mod }2N)$, we have $e_i+e_j>e_k$. This is straightforward, if tedious; here the numerical hypothesis on $N$ precludes the existence of triples $(e_i,e_j,e_k)$ with $i+j \equiv k \text{ (mod }2N)$ in which the indices $i$, $j$, and $k$ belong to the first, second, and third columns of Table~\ref{table:1}, respectively. Finally, no standard basis element $\max(e_{N-j},e_{2N-j})$ with $\ga+1 \leq j \leq N-1$ is a minimal generator, as it is equal to either $e_{2N-1}+e_{N-j+1}$ or $e_{2N-1}+e_{2N-j+1}$.
\end{proof}

We close by showing that, modulo suitable numerical hypotheses, any $(N,\ga)$-hyperelliptic semigroup ${\rm S}$ of genus $g$ 
that arises via our cyclic cover construction has effective weight $\text{ewt}({\rm S}) \geq g$.

\begin{prop}\label{prop:non-secundiveness}
Let $N \geq 2$, $\ga>0$ and $g \geq (2N-1)\ga$ be positive integers for which $t:=\frac{(2g-2)-N(2\ga-2)}{N(N-1)}$ is a positive integer, and assume that ${\rm S}$ is an $(N,\ga)$-hyperelliptic semigroup of multiplicity $2N$ indexed by an element of the feasible set $F(N)$ of Definition~\ref{defn:feasible}. 
The effective weight $\text{ewt}({\rm S})$ of ${\rm S}$ satisfies $\text{ewt}({\rm S})\geq g$ whenever $t>\frac{2\ga+5}{N-1}$, i.e., whenever $g>\frac{N}{2}(4\ga+3)+1$. 
\end{prop}

\begin{proof}
In light of \cite[Def. 5.1 and Lem. 5.3]{Pfl}, it suffices to show that ${\rm S}$ is {\it non-secundive}, i.e., that the largest gap of ${\rm S}$ is strictly greater than the sum of its two smallest minimal generators. By construction, these two smallest minimal generators are $2N$ and $(2\ga+1)N$, while the components of the standard basis $(2N,e_1,\dots,e_{N-1})$ of ${\rm S}$ modulo $2N$ satisfy $\min(e_{N-i},e_{2N-i}) \geq iN t$ for every $i=1,\dots,N-1$. In particular, this means that $\min(e_1,e_{N+1}) \geq N(N-1)t$, and therefore the largest gap $\ell$ of ${\rm S}$ satisfies $\ell \geq N(N-1)t-2N$. Our hypothesis on $t$ relative to $(N,\ga)$ now ensures that ${\rm S}$ is non-secundive.
\end{proof}



\begin{table}
\small
\begin{tabular}{c|c|c|c}
\empty & $1 \leq j \leq \ga$ & $\ga+1 \leq j \leq N-\ga$ & $N-\ga+1 \leq j \leq N-1$\\
\hline
  $t$ odd   &      
$e_{2N-j}=N(t+1)j-j$ and &   $j$ odd: $e_{N-j}=Ntj+ N\ga-j$ & $e_{2N-j}=N(t-1)j+N^2-j$ and\\
  \empty & $e_{N-j}=N(t-1)j-j+ (2\ga+1)N$ & and $e_{2N-j}=Ntj+ (\ga+1)N-j$ & $e_{N-j}=Nj(t+1)+(2\ga+1-N)N-j$ \\
  \empty & \empty & $j$ even: $e_{2N-j}=Ntj+ N\ga-j$ & \empty \\
  \empty & \empty & and $e_{N-j}=Ntj+ (\ga+1)N-j$\\
  \hline
    $t$ even & $j$ odd: $e_{N-j}=N(t+1)j-j$ and & $e_{2N-j}=Ntj+ N\ga-j$ and & $j$ odd: $e_{N-j}=N(t-1)j+N^2-j$ and\\
    \empty & $e_{2N-j}=N(t-1)j-j+ (2\ga+1)N$ & $e_{N-j}=Ntj+ (\ga+1)N-j$ & $e_{2N-j}=Nj(t+1)+(2\ga+1-N)N-j$\\
    \empty & $j$ even: $e_{2N-j}=N(t+1)j-j$ and & \empty & $j$ even: $e_{2N-j}=N(t-1)j+N^2-j$ and\\
    \empty & $e_{N-j}=N(t-1)j-j+ (2\ga+1)N$ & \empty & $e_{N-j}=Nj(t+1)+(2\ga+1-N)N-j$\\
\hline
\end{tabular}
\vspace{.1in}
\caption{Standard basis of the staircase semigroup}
\label{table:1}
\end{table}

\end{ex}

\appendix
\section{Separability results for univariate polynomials}\label{sec:separability}
A classical formula of Sylvester establishes that a univariate polynomial $f(x)\in k[x]$ has a multiple root if and only if the {\it resultant}
of $(f,f^{\pr})$ is zero. 
Here we derive some separability criteria for univariate polynomials of a particular form based on an analysis of the corresponding resultants, in order to justify the constructions carried out in the body of the paper.  

\begin{nt}\label{nt:sylv}
Let $S(f)$ denote the Sylvester polynomial of $f(x)=\sum_{i=0}^n a_ix^i$; that is, the determinant of the following $(2n-1)\times (2n-1)$ matrix:
{\small
\[\begin{bmatrix}
a_n& 0&\hdots&0     &na_n &0&\hdots&0 \\
a_{n-1}&a_n&\hdots&0&(n-1)a_{n-1}& na_n & \hdots&0 \\
\vdots&\vdots&&\vdots          &\vdots&\vdots&&\vdots \\
a_1&a_2&\hdots&a_{n-1}     &a_1 & 2a_2&\hdots& na_n \\
a_0&a_1&\hdots&a_{n-2}       &0 & a_1 &\hdots&(n-1)a_{n-1}\\
0&a_0 &\hdots& a_{n-3} &0  &0& \hdots &(n-2)a_{n-2} \\
\vdots&\vdots&& &\vdots&\vdots&\hdots&\vdots\\ 
0&0&\hdots&a_0 &0&0&\hdots&a_{1}\\
\end{bmatrix}.\]
}
\end{nt}

\begin{lem}\label{lem:poly_sep}
Let $(a_1\hh a_s,b_1\hh b_r)$ be a general point of $\ba^{s+r}(k)$ and $N=s+2r$. Then $f(x)=x^N-\prod_{i=1}^s(x-a_i)\cd \prod_{j=1}^r(x-b_j)^2$ is a separable polynomial.  
\end{lem}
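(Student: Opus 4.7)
Separability of $f$ is equivalent to the non-vanishing of the discriminant $\Delta(f)$, which, for our family, is a polynomial in $k[a_1,\ldots,a_s,b_1,\ldots,b_r]$. Since the locus $\{\Delta(f)\neq 0\} \subset \mathbb{A}^{s+r}$ is Zariski open, the lemma reduces to the assertion that $\Delta(f) \not\equiv 0$, i.e. that the bad locus (where $f$ has a multiple root) is a proper closed subvariety.

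I would carry this out via a dimension count on the incidence variety
\[
\mathcal{V}=\{(x_0,a_1,\ldots,a_s,b_1,\ldots,b_r)\in\mathbb{A}^1\times\mathbb{A}^{s+r}\,:\, f(x_0)=f'(x_0)=0\}.
\]
The bad locus is $\pi(\mathcal{V})$, where $\pi$ is the projection to $\mathbb{A}^{s+r}$; it suffices to show $\dim\mathcal{V}\leq s+r-1$. Since $\mathcal{V}$ lives in an $(s+r+1)$-dimensional ambient space and is cut out by two equations, it will be enough to check that these two equations are generically independent on $\mathcal{V}$. I would do this by computing the $2\times(s+r)$ Jacobian matrix of $(f(x_0),f'(x_0))$ in the parameters $(a,b)$ at a generic point: the columns are of the form
\[
\frac{\partial f(x_0)}{\partial a_k} = \prod_{i\neq k}(x_0-a_i)\prod_j(x_0-b_j)^2, \qquad \frac{\partial f(x_0)}{\partial b_\ell} = 2(x_0-b_\ell)\prod_i(x_0-a_i)\prod_{j\neq\ell}(x_0-b_j)^2,
\]
while the corresponding partials of $f'(x_0)$ are obtained by differentiating these expressions once more in $x_0$. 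The rank-2 statement then follows from the observation that in characteristic zero no non-constant polynomial $h(x)$ can satisfy $h'(x)=c\cdot h(x)$ identically; thus the two rows of the Jacobian (a family of polynomials and their $x$-derivatives, evaluated at $x_0$) cannot be proportional at a generic $(x_0,a,b)\in\mathcal{V}$.

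Granted the Jacobian has rank $2$ on a dense open subset of $\mathcal{V}$, the fiber-dimension theorem gives $\dim\mathcal{V}\leq(s+r+1)-2=s+r-1$, whence $\pi(\mathcal{V})\subsetneq\mathbb{A}^{s+r}$ is a proper closed subvariety, and its complement is a non-empty Zariski open set on which $f$ is separable.

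The main technical obstacle is the rank-$2$ verification: one must rule out the possibility that the two equations $f(x_0)=0$ and $f'(x_0)=0$ become linearly dependent \emph{everywhere} on $\mathcal{V}$, which would force a relation of the form $\partial f'(x_0)/\partial a_k = c(x_0,a,b)\cdot\partial f(x_0)/\partial a_k$ (and similarly in $b_\ell$) holding identically. As a safety net, if a cleaner proof is desired one can instead exhibit an explicit parameter choice---for instance take $s+r=2$ with $(a_1,b_1)$ generic and compute directly, obtaining a discriminant that factors as $b_1^3(b_1-4a_1)$ (up to scalar) in the $s=r=1$ case, and then reduce the general case to this base case by a specialization/induction in which all but one of the parameters are fixed at generic values.
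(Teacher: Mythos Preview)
Your overall reduction is correct and matches the paper: separability is an open condition, so it suffices to show the discriminant $\Delta(f)\in k[a_1,\ldots,a_s,b_1,\ldots,b_r]$ is not identically zero. But your main line of attack (the incidence-variety dimension count) has a gap at the Jacobian step, while your ``safety net'' is in fact the right idea and is exactly what the paper does.

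\medskip
\textbf{The gap.} You claim rank $2$ follows from ``no nonconstant $h$ satisfies $h'=c\cdot h$ identically''. But proportionality of the two Jacobian rows at a point $(x_0,a,b)\in\mathcal V$ only says $h_k'(x_0)=c\cdot h_k(x_0)$ for all $k$ \emph{at that single value} $x_0$, with $c$ depending on the point. That is not a polynomial identity in $x$, so the observation does not apply. Concretely, with $h_k=P/(x-a_k)$ one has $h_k'/h_k=P'/P-1/(x-a_k)$, so equality of all these ratios at $x_0$ forces $a_1=\cdots=a_s$, etc.; but you then still have to argue that every high-dimensional component of $\mathcal V$ meets the locus where the $a_k$ are not all equal, which is precisely the kind of statement you were trying to avoid. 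The dimension count can be made to work, but not with the one-line justification given.

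\medskip
\textbf{The paper's approach.} The paper dispenses with all of this by exhibiting a single explicit parameter value where $f$ is separable. Setting every $a_i=b_j=1$ gives
\[
f(x)=x^N-(x-1)^N,
\]
a polynomial of degree $N-1$ whose roots are exactly $\xi^i/(\xi^i-1)$ for $i=1,\ldots,N-1$, with $\xi$ a primitive $N$-th root of unity. These $N-1$ values are pairwise distinct (the map $z\mapsto z/(z-1)$ is a M\"obius transformation), so $f$ is separable at this point and hence $\Delta(f)\not\equiv 0$. This is precisely the ``exhibit one good specialization'' strategy you mention as a fallback, carried out with a much cleaner choice than the $s=r=1$ base case plus induction you sketch.
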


\begin{proof}
Let $e_i$ denote the $i$-th elementary symmetric polynomial in $N$ variables, and let \\
$\ov{e_i}=e_i(-a_1\hh-a_s,-b_1,-b_1\hh-b_r,-b_r)$. Then $f(x)=-\sum_{i=1}^{N}\ov{e_i}x^{N-i}$.  By definition, the Sylvester polynomial $S(f)$ of $f(x)$ is homogeneous of degree $2N-3$ in $(\ov{e_1}\hh \ov{e_N})$, since $\deg(F)=N-1$. 

\medskip
\noindent Now let $F:\ba^{N}\to\ba^{N}$ denote the morphism 
\[
(a_1\hh a_s,b_1\hh b_{2r})\mapsto (\ov{e_i})_{i=1}^{N}=(e_i(-a_1\hh-a_s,-b_1\hh-b_{2r}))_{i=1}^{N}
\]
that sends the roots of a degree-$N$ polynomial to the coefficients of its expansion. Pulling $S(f)$ back along $F$, we obtain a polynomial $S$ that is homogeneous in $a_1\hh a_s,b_1\hh b_{2r}$ of degree $N^2-2$.

\medskip
To conclude, it suffices to show that $S$ restricts to a non-zero function on the linear subspace defined by $b_{2i-1}-b_{2i}=0$, for $i=1\hh r$. For this purpose, it further suffices to show that $S$ restricts to a non-zero function on the linear subspace defined by $b_1=\hdots =b_r=1$. In fact, 
the polynomial $x^N-(x-1)^N$ is separable, with roots $\frac{\xi^i}{\xi^i-1}$ ($i=1\hh N-1$), where $\xi$ is a primitive $N$-th root of unity; but this means exactly that $S(1\hh 1)\neq 0$.  
\end{proof}

Using basic properties of symmetric polynomials, we obtain a strengthened form of Lemma~\ref{lem:poly_sep}: 

\begin{lem}\label{lem:sep3}
Suppose $N=s+2$ and that $(a_1\hh a_s,b,b)$ is a general point of the algebraic subset of $\ba^{N}(k)$ defined by $e_1(x_1\hh x_N)=\hdots=e_{\ell-1}(x_1\hh x_N)=0$ and $x_{s+2}-x_{s+1}=0$, where $1\le \ell<N$.\footnote{When $\ell=1$, we interpret these to be the hypotheses of Lemma~\ref{lem:poly_sep}.}. Then $f(x)=x^N-\prod_{i=1}^s(x-a_i)\cd (x-b)^2$ is separable.  
\end{lem}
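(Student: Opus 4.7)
The plan is to mirror the strategy of Lemma~\ref{lem:poly_sep}. I will view the Sylvester resultant of $(f, f')$ as a regular function on $\mathbb{A}^{s+1}$ with coordinates $(a_1, \dots, a_s, b)$ (corresponding to points $(a_1, \dots, a_s, b, b)$ of the ambient algebraic subset), noting that it vanishes precisely where $f$ has multiple roots. It therefore suffices to exhibit a single point of the subvariety
\[
V := \{(a_1,\dots,a_s, b) \in \mathbb{A}^{s+1} : e_i(a_1, \dots, a_s, b, b) = 0, \ i = 1, \dots, \ell-1\}
\]
at which $f$ is separable; by Zariski openness of the separable locus, the desired conclusion for a general point of the algebraic subset will follow.

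For the construction, I would specialize $b = 1$ and consider the explicit polynomial $P(x) := x^N - Nx + (N-1)$. Direct computation gives $P(1) = P'(1) = 0$ and $P''(1) = N(N-1) \neq 0$, so $x = 1$ is a root of $P$ of multiplicity exactly $2$; moreover, every other critical point $\beta$ of $P$ satisfies $\beta^{N-1} = 1$, in which case $P(\beta) = (N-1)(1-\beta)$, which is nonzero for $\beta \neq 1$. Hence $P$ has no other multiple roots, and $R(x) := P(x)/(x-1)^2$ is a monic polynomial of degree $s = N-2$ with $s$ distinct simple roots $a_1, \dots, a_s$, all different from $1$. The identity
\[
(x-1)^2 \prod_{i=1}^{s}(x - a_i) = P(x) = x^N - \bigl(Nx - (N-1)\bigr)
\]
shows that $e_i(a_1, \dots, a_s, 1, 1) = 0$ for every $i = 1, \dots, N-2$; in particular, the point $(a_1, \dots, a_s, 1)$ belongs to $V$ for any $\ell \leq N-1$. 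At this point, $f(x) = Nx - (N-1)$ is a nonzero linear polynomial, hence trivially separable.

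The main remaining subtlety will be that the explicit point just constructed has $\deg f = 1$ rather than the generic expected degree $N - \ell$, so it lies on a proper closed subvariety of $V$ on which also $e_\ell = \cdots = e_{N-2}$ vanish; a priori this could be a component of $V$ of less-than-expected dimension. To rule this out, I would perturb: within the $(N-\ell-1)$-parameter family of polynomials $\widetilde{q}(x)$ of degree at most $N-\ell$ with $\widetilde{q}(1) = 1$ and $\widetilde{q}'(1) = N$, the choice $q(x) = Nx - (N-1)$ is one point, and for generic $\widetilde{q}$ in this family, $\widetilde{P}(x) := x^N - \widetilde{q}(x)$ retains $x = 1$ as a double root with no other multiple roots (an open condition around our explicit example). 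The corresponding $(a, 1)$ belongs to $V$ with $f = \widetilde{q}$ separable; varying $b$ as well (e.g.\ by replacing the construction with $P_b(x) = x^N - N b^{N-1} x + (N-1)b^N$, which has $b$ as its unique double root whenever $b \neq 0$) yields an $(N-\ell)$-parameter family of such points, certifying that $f$ is separable on a dense open subset of an irreducible component of $V$ of the expected dimension $N-\ell$.
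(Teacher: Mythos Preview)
Your approach is correct and genuinely different from the paper's. The paper argues by contradiction: it assumes $S(f)$ vanishes identically on $Z$, uses the $S_N$-symmetry of $S(f)$ to deduce that $S(f)$ vanishes on the larger locus $\{e_1=\cdots=e_{\ell-1}=0,\ V_N^2=0\}$, and then derives a contradiction from the irreducibility of the discriminant (as a polynomial in $e_1,\dots,e_N$) together with the inhomogeneity of $V_N^2$ modulo $(e_1,\dots,e_{\ell-1})$. Your argument is instead constructive: you exhibit a point of $V$ at which $f$ is separable. This is more elementary---it avoids invoking irreducibility of the discriminant---and it has the virtue of producing an explicit witness.

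Two points deserve tightening. First, your opening sentence that $S(f)$ ``vanishes precisely where $f$ has multiple roots'' is not quite right: the Sylvester determinant, computed for $f$ regarded as a polynomial of formal degree $N-\ell$, also vanishes whenever the leading coefficient $\bar e_\ell$ does. In particular, it \emph{does} vanish at your explicit point $(a_1,\dots,a_s,1)$, where $f=Nx-(N-1)$ has degree $1$. So the explicit point alone does not prove the lemma; your perturbation is not optional but essential. Second, in the perturbation you should make explicit that the parameter space of $\widetilde q$ (degree $\le N-\ell$, $\widetilde q(1)=1$, $\widetilde q'(1)=N$) is an irreducible affine space, and that the three conditions (i) $x^N-\widetilde q$ has $1$ as an exactly-double root with no other repeated roots, (ii) $\deg\widetilde q=N-\ell$, (iii) $\widetilde q$ is separable, are each open and nonempty there---(i) because it holds at $Nx-(N-1)$, and (ii)--(iii) by direct construction (e.g.\ $\widetilde q=x^{N-\ell}+\ell x-\ell$ works). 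Their intersection is then nonempty, which is what you need. The final remarks about varying $b$ and matching expected dimension are not needed: exhibiting one point with $S(f)\ne 0$ already shows $S(f)$ is not identically zero on $Z$, which is exactly what the paper proves.
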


\begin{proof}
Under the given assumptions, we have $f(x)=-\sum_{i=\ell}^{N}\ov{e_i}x^{N-i}$; the corresponding Sylvester polynomial $S(f)$ is\footnote{Starting from the definition, add $-(N-\ell-1)$ times the $i$th column to the $(N-\ell-2+i)$th column for $i=1\hh N-\ell-2$.}
{\small
\[-
\begin{vmatrix}
\ov{e_{\ell}}& 0&\hdots&0     &0 &0&\hdots&0 \\
\ov{e_{\ell+1}}&\ov{e_{\ell}}&\hdots&0&-\ov{e_{\ell+1}}& 0 & \hdots&0 \\
\vdots&\vdots&&\vdots          &\vdots&\vdots&&\vdots \\
\ov{e_{N-2}}&\ov{e_{N-3}}&\hdots&\ov{e_{\ell}}     &-(N-\ell-2)\ov{e_{N-2}} & -(N-\ell-3)\cd\ov{e_{N-3}} &\hdots& 0 \\
\ov{e_{N-1}}&\ov{e_{N-2}}&\hdots&\ov{e_{\ell+1}}      &-(N-\ell-1)\cd\ov{e_{N-1}} & -(N-\ell-2)\ov{e_{N-2}} &\hdots&(N-\ell)\ov{e_{\ell}}\\
\ov{e_N}&\ov{e_{N-1}} &\hdots& \ov{e_{\ell+2}} &-(N-\ell)\ov{e_{N}}  &-(N-\ell-1)\cd\ov{e_{N-1}} & \hdots &(N-\ell-1)\cd\ov{e_{\ell+1}} \\
\vdots&\vdots&& &\vdots&\vdots&\hdots&\vdots\\ 
0&0&\hdots&\ov{e_{N}}&0&0&\hdots&\ov{e_{N-1}}\\
\end{vmatrix}.
\]
}
In particular, it is homogeneous of degree $2N-2\ell-1$ in $\ov{e_1}\hh \ov{e_N}$, and symmetric and homogeneous of degree $N^2-N-\ell^2$ in the affine coordinates $x_1\hh x_{N}$ of $\ba^N$.  

\medskip
Our aim is to show that the pull-back of $S(f)$ to the algebraic set $Z$ defined by $e_i(x_1\hh x_N)=0$ ($i=1\hh \ell-1$) and $x_{s+2}-x_{s+1}=0$ is not identically zero. 
Indeed, if $S(f)$ vanishes along $Z$, then because $S(f)$ is symmetric it must vanish on the algebraic set defined by $e_i(x_1\hh x_N)=0$, $i=1\hh \ell-1$ and $V^2_N=0$, where $V_N(x_1\hh x_N)=\prod_{i<j}(x_i-x_j)$ is the Vandermonde polynomial in $N$ variables. Thus, letting $I=\lag e_1\hh e_{\ell-1},V^2_N\rag $, we have $S(f)\in\sqrt{I}$, i.e., $S(f)^m=P_N(0\hh 0,e_{\ell}\hh e_N)\cd h(e_{\ell}\hh e_N)$ for some $m\ge 1$, where $V^2_N=P_N(e_1\hh e_N)$. 
However, $S(f)$ is irreducible in the variables $e_1\hh e_N$; see \cite[Example 1.1.4]{gelfand2008discriminants}. So the vanishing of $S(f)$ along $Z$ would imply that $P_N(0\hh 0,e_{\ell}\hh e_N)=S(f)^{m^{\pr}}$ for some $m^{\pr}$. This last possibility, however, is impossible as $P_N(0\hh 0,e_{\ell}\hh e_N)$ is {\it inhomogeneous} in $e_{\ell}\hh e_N$. Indeed, via Newton's method we may express $V_n^2(x_1\hh x_n)$ as a polynomial in elementary symmetric functions $\sum_{\la\vdash n(n-1)}c^{(n)}_{\la}e_{\la}$, where $e_{\la}=\prod e_{\la_i}$ and $c^{(n)}_{n^{n-1}}\neq 0$.\footnote{This is a consequence of the fundamental theorem of symmetric polynomials.} The fact that the coefficients $c^{(n)}_{n^{n-1}}$ are nonzero follows via induction using the relation $V_{n}^2=P_{n-1}(e_1\hh e_{n-1})e^2_{n-1}+e_{n}h_{n}$.
\end{proof}

The next lemma builds on the same idea, and its proof is even simpler. 
\begin{lem}\label{lem:sep2}
Fix $ s\ge1 $ and $N>s+2r$. For general choices of $(a_\hh a_s,b_1\hh b_r)\in\ba^{s+r}(k)$ and $c\in k$, the polynomial
$f(x)=c\cd x^N+\prod_{i=1}^{s}(x-a_i)\prod_{j=1}^r(x-b_j)^2$ is separable. 
\end{lem}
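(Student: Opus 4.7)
The plan is to show that the Sylvester resultant $S(f)$, viewed as a polynomial in the parameters $(a_1,\ldots,a_s,b_1,\ldots,b_r,c)$, is not identically zero; since $f$ is separable precisely when $S(f)\neq 0$, this will give the desired genericity statement.

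Set $g(x):=\prod_{i=1}^s(x-a_i)\prod_{j=1}^r(x-b_j)^2$, so that $f(x)=cx^N+g(x)$. The core observation is the linear combination
\[
Nf(x_0)-x_0f'(x_0)=Ncx_0^N+Ng(x_0)-x_0\bigl(Ncx_0^{N-1}+g'(x_0)\bigr)=Ng(x_0)-x_0g'(x_0),
\]
in which every term involving $c$ cancels. Setting $h(x):=Ng(x)-xg'(x)$, I conclude that any common root $x_0$ of $f$ and $f'$ must satisfy $h(x_0)=0$. Since $g$ is monic of degree $s+2r$ and $N-(s+2r)\neq 0$ by hypothesis, $h$ has degree exactly $s+2r$ (its leading coefficient being $N-(s+2r)$), and therefore admits at most $s+2r$ roots, each depending only on $(a_i,b_j)$ and not on $c$.

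Next I check that $0$ is not a root of $h$ for generic $(a_i,b_j)$: indeed,
\[
h(0)=Ng(0)=N\prod_{i}(-a_i)\prod_{j}(-b_j)^{2},
\]
which is nonzero provided every $a_i$ and $b_j$ is nonzero. For each (necessarily nonzero) root $x_0$ of $h$, the requirement $f(x_0)=cx_0^N+g(x_0)=0$ then pins down a single value $c=-g(x_0)/x_0^N$. Thus for each generic choice of $(a_i,b_j)$, the set of $c$ for which $f$ fails to be separable is finite, which certifies that $S(f)$ does not vanish identically on parameter space.

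I do not foresee any serious obstacle: the identity $Nf-xf'=Ng-xg'$ carries out the main algebraic work by eliminating $c$, and the strict inequality $N>s+2r$ is precisely what is needed to guarantee $h\not\equiv 0$. The role of the genericity assumptions on $(a_i,b_j)$ is merely to ensure the nonvanishing of $h(0)$, after which separability of $f$ is open and nonempty in the $c$-variable.
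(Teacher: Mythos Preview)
Your argument is correct. The identity $Nf-xf'=Ng-xg'$ cleanly eliminates $c$, and since $N>s+2r$ guarantees $h=Ng-xg'$ has positive degree while generic nonvanishing of the $a_i,b_j$ gives $h(0)\neq 0$, the finitely many (nonzero) roots of $h$ yield only finitely many bad values of $c$.

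The paper takes a different route: it fixes $(a_i,b_j)$ and regards the Sylvester determinant $S(f)$ as a polynomial in $c$, then computes its leading term to be $N^N e^{N-1}c^N$ with $e=\prod a_i\prod b_j^2$; nonvanishing of $e$ for general $(a_i,b_j)$ then forces $S(f)\not\equiv 0$ in $c$. Both arguments ultimately hinge on the same genericity condition (all $a_i,b_j$ nonzero, i.e.\ $g(0)\neq 0$), but the paper extracts it from the explicit form of the resultant whereas you extract it from the elementary elimination $h(0)=Ng(0)$. Your approach is more self-contained, avoiding any appeal to the structure of the Sylvester matrix; the paper's approach, on the other hand, yields slightly sharper information (the exact degree and leading coefficient of $S(f)$ in $c$, hence an explicit bound of $N$ on the number of bad values of $c$, versus your bound of $s+2r$).
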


\begin{proof}
For a fixed choice of $a_1\hh a_s,b_1\hh b_r$, $S(f)$ is a polynomial in $c$ of degree $N$ whose leading term is $N^Ne^{N-1}c^N$,
where $e=\prod a_i\cd \prod b_j^2$. (As the $a_i$ and $b_j$ are general, we have $e\neq 0$.) 
Hence $S(f)\neq 0$ for a general value of $c$.
\end{proof}

\section{Flynn's models}\label{sec:flynn}
In \cite{flynn1991sequences}, Flynn developed effective methods for producing rational torsion points of large orders on special families of hyperelliptic curves. In this section, we use Flynn's models in order to certify the realizability of certain multiplication profiles. 

\subsection{Torsion orders that grow linearly with respect to the genus}
In \textit{loc. cit.} Flynn recursively defined sequences of polynomials $\phi_r(x),\theta_r(x)$ given by
{\small
\[\theta_{r+1}(x)=(x+2)\theta_{r}(x)+2(x+1)\phi_r(x),\phi_{r+1}(x)=2\theta_{r}(x)+(x+2)\phi_r(x)\]
}
subject to $\theta_1(x)=\phi_1(x)=1$; and studied genus-$\ga$ hyperelliptic curves with presentations
{\small
\begin{equation}\label{eqn:flynn_presentation}
y^2+(\phi_r(x) y-\theta_r(x)x^{\ga})w(x)=x^{2\ga+1}+x^{2\ga}    
\end{equation}
}
where $w(x)$ is a polynomial of degree at most $\ga-2r+1$ and $w(0)\notin \{0,-4^{2-r}\}$. Flynn showed that for $p=(0,0)$ and $q$ equal to the point over infinity, $p-q$ is a torsion element in the Jacobian of \eqref{eqn:flynn_presentation} of order $2g+2r-1$ (\cite[Result 1(a)]{flynn1991sequences}).

\medskip
Our goal is to determine the multiplication profile of $p-q$. By symmetry, this is tantamount to computing $\ep_{\ga+i}$, for $i=2\hh r$ (though sometimes the profile will be determined by a smaller subset of the $\ep_j$).
When $i$ is small, the task further reduces to calculating the degree of $v(x)$, where $(x^{\ga+i},v(x))$ is the Mumford presentation for $(\ga+i)\cd(p-q)$. For example, when $i=2$, we have $\ep_{\ga+2}=\ga$ if $\deg(v)=\ga+1$ and $\ep_{\ga+2}=\ga-1$ if $\deg(v)\le\ga$. Indeed, the reduction algorithm terminates in one step in both cases and $\ep_{\ga+2}=\deg(\frac{v^2+vh-f}{x^{\ga+2}})$. Similarly, let $(x^{\ga+3},v(x))$ be the Mumford presentation of $(\ga+3)(p-q)$. Then $\ep_{\ga+3}=\ga$ if $\deg(v)=\ga+2$ and $\ep_{\ga+3}\le \ga-1$ otherwise; either way, the reduction algorithm terminates in two steps.   

\begin{nt}
Given a polynomial $f(x)$, let $f(x)_i$ denote the $x^i$-coefficient of its expansion.
\end{nt}
\noindent To compute the degree of $v(x)$ in these cases, we first solve the equation 
\[Z^2+\phi_r(x)w(x) Z-\theta_r(x)w(x)x^{\ga}=0, \qquad i=2,3\] 
over $k[\![x]\!]$, and then appropriately truncate the result. Since the discriminant of the left-hand side is $\phi_r^2w^2+4\theta_rwx^{\ga}$, 
it has a square root of the form $\phi_r(x)w(x)+\sum_{i=\ga}^{\infty}a_ix^i$. This, in turn, leads to the following equations:
{\small
\[\begin{cases}
2(\phi_rw)_0\cd a_{\ga}=4(\theta_rw)_0\\
2(\phi_rw)_1\cd a_{\ga}+2(\phi_rw)_0\cd a_{\ga+1}=4(\theta_rw)_1\\
2(\phi_rw)_2\cd a_{\ga}+2(\phi_rw)_1\cd a_{\ga+1}+2(\phi_rw)_0\cd a_{\ga+2}=4(\theta_rw)_2\\
\hdots
\end{cases}
\]
}
\noindent Using the inductive definitions of $\phi$ and $\theta$, respectively, we deduce that 
{\small
\[(\phi_r)_0=(\theta_r)_0=4^{r-1} \text{ and } (\theta_{r+1})_k-(\phi_{r+1})_k=(\theta_{r})_{k-1}+(\phi_{r})_{k-1}, 1\le k\le r.\]
}
Thus, $a_{\ga}=2$,  $a_{\ga+1}=\frac{2((\theta_{r}w)_1-(\phi_{r}w)_1)}{(\phi_rw)_0}=\frac{2((\theta_{r-1}w)_0+(\phi_{r-1}w)_0)}{(\phi_rw)_0}=1$, and
{\small
\[2(\phi_r)_1(w)_0+8(\phi_{r-1})_0(w)_1+2(\phi_rw)_0a_{\ga+2}=4[(\theta_rw)_2-(\phi_rw)_2].\]
}
In view of 
{\small
\[
(\theta_rw)_2-(\phi_rw)_2=[(\theta_r)_2-(\phi_r)_2]w_0+[(\theta_r)_1-(\phi_r)_]w_1=[(\theta_{r-1})_1+(\phi_{r-1})_1]w_0+[(\theta_{r-1})_0+(\phi_{r-1})_0]w_1
\]
}
it follows that
{\small
\[2(\phi_r)_1(w)_0+2(\phi_rw)_0a_{\ga+2}=4[(\theta_{r-1})_1+(\phi_{r-1})_1]w_0.\]
}
Finally, applying the relation $(\phi_r)_1=2(\theta_{r-1})_1+2(\phi_{r-1})_1+(\phi_{r-1})_0$, we deduce that $2(\phi_{r-1}w)_0+2(\phi_{r}w)_0a_{\ga+2}=0$, so $a_{\ga+2}=-\frac{1}{4}$. Since we are merely interested in realizing 
multiplication profiles, we may fix $w(x)=1$ (and thereby remove the restriction placed on $\ga$ in \cite{flynn1991sequences}). Letting $r=4,5$ yields:
\begin{cor}
For $r=4$ and $5$, let $p=(0,0)$ be the point defined as above. When $r=4$ and $\ga\ge 3$ (resp. $r=5$ and $\ga\ge 4$), $p-q$ has order $2\ga+7$ (resp. $2\ga+9$) and $m(p-q)$ is the multiplication profile determined by $(\ep_{\ga+2},\ep_{\ga+3})=(\ga,\ga)$ (resp. $(\ep_{\ga+2},\ep_{\ga+3},\ep_{\ga+4})=(\ga,\ga,\ga)$).
\end{cor}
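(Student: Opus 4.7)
The torsion order $2\ga+2r-1$ of $p-q$ is immediate from Flynn's Result 1(a), giving $N=2\ga+7$ when $r=4$ and $N=2\ga+9$ when $r=5$. Combining the Lipschitz bound $|\ep_j-\ep_{j+1}|\le 1$ (Corollary \ref{cor:nece2}), the symmetry $\ep_j=\ep_{N-j}$ (which follows from $\ep_N=0$ and the inversion-invariance of each $\Theta_d$), and the initial segment $(\ep_1,\ldots,\ep_{\ga+1})=(1,2,\ldots,\ga,\ga)$ implied by Example \ref{ex1}, it suffices to check that $\ep_{\ga+2}=\ep_{\ga+3}=\ga$ for $r=4$ and $\ep_{\ga+2}=\ep_{\ga+3}=\ep_{\ga+4}=\ga$ for $r=5$.

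The paragraph preceding the corollary reduces $\ep_{\ga+i}=\ga$ (for $i=2,3$) to the nonvanishing of the coefficient $a_{\ga+i-1}$ appearing in the square-root power series $\sqrt{\phi_r^2w^2+4\theta_rwx^\ga}=\phi_rw+\sum_i a_ix^i$. The computations already recorded there give $a_\ga=2$, $a_{\ga+1}=1$, $a_{\ga+2}=-1/4$, all nonzero, and uniformly in $r$; an elementary degree count over one or two steps of Cantor's reduction then forces $\deg u_{\text{final}}=\ga$ in each case. This settles $r=4$.

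For $r=5$ we additionally need $\ep_{\ga+4}=\ga$. Take $w=1$; direct computation from the recursions $\theta_{r+1}=(x+2)\theta_r+2(x+1)\phi_r$ and $\phi_{r+1}=2\theta_r+(x+2)\phi_r$ yields $\phi_5=x^4+40x^3+240x^2+448x+256$ and $\theta_5=9x^4+120x^3+432x^2+576x+256$. Feeding these into the linear recurrence for $\{a_{\ga+k}\}$ and using the already-computed values of $a_\ga,a_{\ga+1},a_{\ga+2}$, one finds $a_{\ga+3}=1/8\ne 0$, so the Mumford pair $(x^{\ga+4},v_0)$ of $(\ga+4)(p-q)$ has $\deg v_0=\ga+3$. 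A single Cantor step produces $u_1$ of degree $\ga+2$.

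The main obstacle is verifying that $v_1:=-v_0-h\pmod{u_1}$ has degree exactly $\ga+1$: if $\deg v_1$ dropped to $\ga$ or lower, the second reduction would give $u_2$ of degree $\ga-1$ and hence $\ep_{\ga+4}=\ga-1$. This is checked by an explicit polynomial division. Since $v_0=x^\ga(1+\tfrac{1}{2}x-\tfrac{1}{8}x^2+\tfrac{1}{16}x^3)$ and $h=\phi_5$ are $\ga$-independent up to a shift in degree, and $u_1$ inherits the same uniformity (its top coefficients come from $v_0^2/x^{\ga+4}$, its bottom ones from $(hv_0-f)/x^{\ga+4}$, and the intermediate coefficients are zero for $\ga\ge 6$ and handled case-by-case for $\ga\in\{4,5\}$), the division is essentially the same for every $\ga\ge 4$ and yields $v_1=\tfrac{1}{4}x^{\ga+1}+\tfrac{3}{2}x^\ga+\cdots+4x^2+48x+64$, whose leading coefficient $1/4$ is nonzero. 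A second Cantor step then produces $u_2$ of degree $2\ga+2-(\ga+2)=\ga$ with leading coefficient $1/16$, so $\ep_{\ga+4}=\ga$ as required.
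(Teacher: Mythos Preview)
Your argument is correct and follows the paper's approach; in fact you carry out the $r=5$ case more carefully than the paper does, since the text preceding the corollary computes only $a_{\ga},a_{\ga+1},a_{\ga+2}$ before asserting the result and leaves the verification of $\ep_{\ga+4}$ to the reader.

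One simplification is worth pointing out for the $r=5$ step. Having computed $a_{\ga+3}=1/8$, you can bypass the explicit two-step Cantor reduction entirely by invoking Lemma~\ref{lem:coeff_prof}. For $n=\ga+4$ the parity $n+\ga$ is even, so $\ep_{\ga+4}<\ga$ holds iff the $2\times 2$ Hankel determinant $D_{\ga+4}$ in the coefficients of $\sqrt{f}$ vanishes. Completing the square in Flynn's presentation gives the standard-form curve $Y^2=f$ with $\sqrt{f}=\tfrac{1}{2}\phi_5+\tfrac{1}{2}\sum_{i\ge\ga}a_ix^i$ modulo $x^{2\ga}$ (the dropped terms $x^{2\ga}+x^{2\ga+1}$ do not affect coefficients below $x^{2\ga}$, and $\deg\phi_5=4<\ga+1$ when $\ga\ge 4$). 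Hence the entries of $D_{\ga+4}$ are $\tfrac{1}{2}a_{\ga+1},\tfrac{1}{2}a_{\ga+2},\tfrac{1}{2}a_{\ga+3}$, and
\[
D_{\ga+4}=\tfrac{1}{2}\cdot\tfrac{1}{16}-\bigl(-\tfrac{1}{8}\bigr)^2=\tfrac{1}{32}-\tfrac{1}{64}=\tfrac{1}{64}\neq 0,
\]
giving $\ep_{\ga+4}=\ga$ immediately. This is presumably the route the paper has in mind, and it spares you the delicate check that $\deg v_1=\ga+1$ (which, as you note, is where the argument could in principle fail).
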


\subsection{A variant of Flynn's construction}
Assume $\ga\ge 4$. Let $B$ denote the hyperelliptic affinely presented by $y^2+(x^3+4)y=x^{2\ga+1}+x^{2\ga-2}-(3x^3+4)x^{\ga-1}$. Let $p=(0,0)$ and $q$ be the point over infinity. We claim that the principal divisor associated to the rational function $(3x^3+4)x^{\ga-1}+(x^3+4)y$ on $B$ is $(2\ga+7)(p-q)$.

\medskip
\noindent To begin, note that when $(3x^3+4)x^{\ga-1}+(x^3+4)y=0$, we have 
\[(3x^3+4)^2x^{2\ga-2}-(x^3+4)^2y^2=(3x^3+4)^2x^{2\ga-2}-(x^3+4)^2(x^{2\ga+1}+x^{2\ga-2})=-x^{2\ga+7}=0.\]
As a result, the given function vanishes only at $p=(0,0)$, and its associated principal divisor is
$\dv((3x^3+4)x^{\ga-1}+(x^3+4)y)=(2\ga+7)(p-q)$. Now $\iota(p)=(0,-4)$; so $p$ is not a Weierstrass point. Furthermore $2\ga+7$ is not divisible by $2\ga+i$, $1\le i\le 6$ when $\ga\ge 2$; so $p-q$ is of order exactly $2\ga+7$.

\medskip
To determine $m(p-q)$, we solve the quadratic equation $Z^2+(x^3+4)Z-16x^{\ga-1}=0$ over $k[\![x]\!]$.\footnote{This is where we require that $\ga \geq 4$.} A square root of the discriminant is of the form $x^3+4+\sum_{i\ge\ga-1}a_ix^i$, which leads to the following equations:
{\small
\[\begin{cases}
2\cd 4\cd a_{\ga-1}=4\cd(-4)\\
2\cd 0\cd a_{\ga-1}+2\cd 4\cd a_{\ga}=0 \\
2\cd 0\cd a_{\ga-1}+2\cd 0\cd a_{\ga}+2\cd 4\cd a_{\ga+1}=0 \\
\hdots\hdots
\end{cases}
\]
}
Hence, $a_{\ga-1}=-2$ and $a_{\ga}=a_{\ga+1}=0$. It follows that there is some $v(x)$ that solves the congruence
{\small
\[Z^2+(x^3+4)Z-[x^{2\ga+1}+x^{2\ga-2}-(3x^3+4)x^{\ga-1}]\equiv 0\qquad(\text{mod }x^{\ga+2})\] 
}
and for which $\deg v(x)=\ga-1$. So $m(p-q)$ is the second profile in Ex.~\ref{ex:odd1}.  
\begin{rem}
Technically, we should check that the discriminant of $(x^3+4)-4[(3x^3+4)^2x^{2\ga-2}-(x^{2\ga+1}+x^{2\ga-2})]$ is non-zero. However, as the curve $B$ is non-singular at every point involved in the putative linear equivalence relations, it suffices to replace $B$ by its normalization, if necessary, to certify 
the existence of a torsion element with the desired multiplication profile.
\footnote{More explicitly, let $F(x,y)=y^2+(x^3+4)y+(3x^3+4)x^{\ga-1}-x^{2\ga+1}-x^{2\ga-2}$. We have $\frac{\p F}{\p y}(0,0)=4$, so the curve cut out by $F(x,y)$ is non-singular at $p$. Computing the principal divisor of $-x^{\ga-1}-y$ yields $(\ga+2)(p-q)\sim \sum_{i=1}^{\ga-1}(r_i-q)$, where $r_i=(\xi_i,-\xi_i^3-2)$ and the $\xi_i$ run through all roots of $x^{\ga-1}-2=0$. It is easy to see $\frac{\p F}{\p y}(r_i)=-\xi_i^3\neq 0$.} 
\end{rem}

\end{document}